\newcommand{\executeiffilenewer}[3]{%
 \ifnum\pdfstrcmp{\pdffilemoddate{#1}}%
 {\pdffilemoddate{#2}}>0%
 {\immediate\write18{#3}}\fi%
}
\newcommand{%
 \executeiffilenewer{.svg}{.pdf}%
 {inkscape -z -D --file=.svg %
 --export-pdf=.pdf --export-latex}%
 \input{.pdf_tex}%
}[1]{%
 \executeiffilenewer{#1.svg}{#1.pdf}%
 {inkscape -z -D --file=#1.svg %
 --export-pdf=#1.pdf --export-latex}%
 \input{#1.pdf_tex}%
}
\theoremstyle{definition} 
 \newtheorem{definition}{Definition}[section]
 \newtheorem{remark}[definition]{Remark}
 \newtheorem{example}[definition]{Example}
\theoremstyle{plain}      
 \newtheorem{proposition}[definition]{Proposition}
 \newtheorem{theorem}[definition]{Theorem}
 \newtheorem{corollary}[definition]{Corollary}
 \newtheorem{lemma}[definition]{Lemma}
 \newtheorem{hypothesis}{Hypothesis}
\newtheorem*{theorem*}{Theorem}
\renewcommand{\epsilon}{\varepsilon}
\renewcommand{\phi}{\varphi}
\newcommand{\N}{\mathbb{N}}
\newcommand{\C}{\mathbb{C}}
\newcommand{\Z}{\mathbb{Z}}
\newcommand{\R}{\mathbb{R}}
\renewcommand{\P}{\mathbb{P}}
\newcommand{\hol}{\psi}
\newcommand{\la}{\langle}
\newcommand{\ra}{\rangle}
\newcommand{\dd}{\mathrm{d}}
\newcommand{\boP}{\mathcal{P}}
\newcommand{\boE}{\mathcal{E}}
\newcommand{\bog}{\mathcal{G}}
\newcommand{\boH}{\mathcal{H}}
\newcommand{\boU}{\mathcal{U}}
\newcommand{\boM}{\mathcal{M}}
\DeclareMathOperator{\tr}{Tr}
\DeclareMathOperator{\id}{Id}
\DeclareMathOperator{\pf}{Pfaff}
\newcommand{\shom}{\underline{\textrm{Hom}}}
\renewcommand{\hom}{\textrm{Hom}}
\newcommand{\ba}[1]{\overline{#1}}
\newcommand{\smalltheta}[2]{\raisebox{-#2cm}{\includegraphics[width=#1cm]{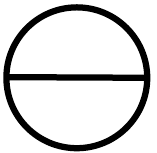}}}
\newcommand{\smalltet}[2]{\raisebox{-#2cm}{\includegraphics[width=#1cm]{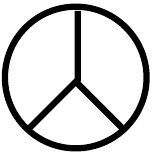}}}
\title{Generating series and asymptotics of classical spin networks}
\date{}
\author{F. Costantino\footnote{Institut de Recherche Math\'ematique Avanc\'ee, 7 Rue Ren\'e Descartes, 67084 Strasbourg, France.} \,\,and J. March{\'e}\footnote{Centre de Math{\'e}matiques Laurent Schwartz, Ecole Polytechnique, route de Saclay, 91128 Palaiseau, France.}}
\begin{document}
\maketitle

{\abstract We study classical spin networks with group SU$_2$. In the first part, using gaussian integrals,  we compute their generating series in the case where the edges are equipped with holonomies; this generalizes Westbury's formula. In the second part, we use an integral formula for the square of the spin network and perform stationary phase approximation under some non-degeneracy hypothesis. This gives a precise asymptotic behavior when the labels are rescaled by a constant going to infinity.
}

\section{Introduction}

A classical spin network is a pair $(\Gamma,c) $ where $\Gamma$ is a trivalent graph equipped with a cyclic ordering of the edges around vertices and $c$ is a map from the edges to the natural numbers called \emph{coloring} and satisfying some simple conditions. According to Penrose \cite{penrose}, one may associate to such a pair a rational number $\la \Gamma,c\ra$ obtained by contracting a tensor with values in some representations of SU$_2$. 

When $\Gamma$ is a theta graph ($\smalltheta{ .5}{0.15}$) or a tetrahedron ($\smalltet{.5}{0.15}$), these quantities were introduced by Racah and Wigner in 1940-1950 for the study of atomic spectra \cite{wigner}. Later, Ponzano and Regge used it as a discrete model for gravity.
 Immediately, physicists were interested in the study of the asymptotic behavior of the quantity $\la \Gamma,kc\ra$ when $k$ goes to infinity. This behavior corresponds to a classical limit of quantum mechanics and is expected to be related to euclidean geometric quantities. 
 
 In the nineties physicists used spin networks in the spin foam models for quantum gravity \cite{baez} and the study of the asymptotical behavior was extended from the 3j (theta) and 6j (tetrahedron) to more complicated networks as the 9j (complete bipartite 3,3 graph) 10j, 15j (skeleta of the 4 and 5 simplices), see \cite{barrett,freidel}. 
Mathematicians got interested in spin networks through the work of Kirillov, Reshetikhin \cite{kr} and Kauffman \cite{k} who introduced and studied their ``quantum versions" which are a key ingredient in the construction of quantum invariants of knots and 3-manifolds. In this paper, we will not deal with these ``quantum versions" but restrict to the classical case. 

The first rigorous proof for the asymptotical behavior of the 6j-symbols were obtained (in the so-called euclidean case) by Roberts \cite{roberts} and then re-obtained and extended using different techniques \cite{charles,GaVV,wt}. For general graphs, Garoufalidis and Van der Veen proved that the generating series of the sequence $k\mapsto \langle \Gamma,kc\rangle$ is a G-function, implying that the sequence $\la \Gamma,kc\ra$ is of Nilsson type and thus that the asymptotic behavior does exist \cite{GaVV}. 
Abdesselam obtained estimations on the growth of spin-network evaluations, specially for generalized drum graphs, see \cite{Ab}.

Another interesting approach to the study of spin networks was proposed by Westbury who computed in \cite{We} the generating series of spin networks as follows (we refer to Section \ref{spin-network} for notation).
Let $R_X=\C[[X_{\alpha},\alpha\in A]]$ be the ring of formal variables associated to the angles of $\Gamma$. For any coloring $c$, we denote by $c_e,c_\alpha$ and $X^c$ respectively the color of the edge $e$, that of the angle $\alpha$ and the monomial $\prod_{\alpha\in A} X_{\alpha}^{c_{\alpha}}$. Then, writing $\la\la \Gamma, c\ra\ra= \la\Gamma,c\ra\frac{\prod_{e\in E}c_e!}{\prod_{\alpha\in A}c_{\alpha}!}$, we define the following formal series: $$Z(\Gamma)= \sum_{c} \la\la \Gamma,c\ra\ra X^c.$$
Given $\delta\subset \Gamma$, a subgraph which is a disjoint union of cycles, we denote by $c_{\delta}$ the coloring which associates to an angle 1 or 0 if $\delta$ contains the two edges forming the angle or not respectively. Let then $P_{\Gamma}= \sum\limits_{\delta\subset \Gamma}  X^{c_{\delta}}$.

\begin{theorem*}[Westbury, \cite{We} ] \label{theo:westbury}
If $\Gamma$ is a planar graph then $Z(\Gamma)=P_{\Gamma}^{-2}$.
\end{theorem*}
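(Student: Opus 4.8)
The plan is to realise $Z(\Gamma)$ as a Gaussian integral over spinor variables and then evaluate that integral in closed form, with the combinatorics of $P_\Gamma$ emerging from a determinant expansion and the exponent $-2$ from the two-dimensionality of the fundamental representation of SU$_2$. The first step is the coherent-state description of the trivalent vertex. Putting a copy of $\C^2$ with its symplectic form $\omega(z,w)=z_1w_2-z_2w_1$ on each edge and feeding SU$_2$ coherent states into the (essentially unique) invariant tensor at a vertex produces, up to the standard normalising factors, the monomial $\prod_{i<j}\omega(z_i,z_j)^{c_{\alpha_{ij}}}$ in the three incident spinors, the exponents being exactly the angle colors. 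The role of the normalisation $\la\la\Gamma,c\ra\ra=\la\Gamma,c\ra\,\frac{\prod_e c_e!}{\prod_\alpha c_\alpha!}$ is then transparent: the factors $1/c_\alpha!$ are precisely what is needed to sum each corner contribution over all colorings into an exponential, $\sum_{c_\alpha}\frac{1}{c_\alpha!}\big(X_\alpha\,\omega_\alpha\big)^{c_\alpha}=\exp(X_\alpha\,\omega_\alpha)$, while the factors $c_e!$ are supplied by gluing the coherent states along the edges through the Bargmann--Fock pairing.

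Assembling these local contributions over $\Gamma$, I would attach a spinor $z^v_e\in\C^2$ to every flag $(v,e)$ (a vertex together with an incident edge): each edge $e=(v,w)$ contributes the gluing pairing $\omega(z^v_e,z^w_e)$, and each corner $\alpha=\alpha_{ij}$ at $v$ contributes $\exp\!\big(X_\alpha\,\omega(z^v_{e_i},z^v_{e_j})\big)$. This yields, schematically,
$$Z(\Gamma)=\int \prod_{(v,e)}\dd\mu(z^v_e)\;\exp\Big(\textstyle\sum_{e=(v,w)}\omega(z^v_e,z^w_e)+\sum_{\alpha}X_\alpha\,\omega_\alpha\Big),$$
where $\dd\mu$ is a Gaussian reference measure supplying the positive-definite part and all equalities are read as identities of formal power series in the $X_\alpha$. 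The total exponent being quadratic, the integration can be performed exactly: writing it as $-\la z,(I-N(X))z\ra$ on $\bigoplus_{(v,e)}\C^2$, with $N(X)$ the incidence-type matrix whose entries encode the edge propagators and the corner weights, one gets
$$Z(\Gamma)=\det\!\big(I-N(X)\big)^{-2},$$
the exponent $-2$ reflecting $\dim_\C\C^2=2$ (equivalently, that the determinant over the spinor index is a Pfaffian squared). As a consistency check, expanding the exponential and applying Wick's theorem should recover the Penrose state sum for $\la\Gamma,c\ra$, confirming the identification of the integrand.

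It then remains to prove $\det(I-N(X))=P_\Gamma$, and this is where I expect the real difficulty and where planarity must enter. Expanding the determinant as a signed sum over permutations, the non-fixed points of each permutation form a disjoint union of cycles; since $N(X)$ couples only flags sharing a corner or an edge, each closed walk alternates corner-steps and edge-steps and therefore traces a cycle of $\Gamma$. Thus the surviving terms are indexed exactly by the subgraphs $\delta\subset\Gamma$ that are disjoint unions of cycles, and each contributes the product of the angle variables at the corners it turns, namely $X^{c_\delta}$. The entire content is that every such term carries coefficient $+1$: one must show that the sign of the permutation, the antisymmetry signs of $\omega$ accumulated around each cycle, and the diagonal signs all cancel. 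This fails for general $\Gamma$, and the hypothesis that $\Gamma$ is planar is used precisely here, the cyclic orderings of the edges around the vertices assembling into a coherent orientation (a rotation-number, or Jordan-curve, argument for the embedded cycles) that forces every cycle to contribute with sign $+1$. Granting this sign analysis, the determinant equals $\sum_{\delta}X^{c_\delta}=P_\Gamma$, whence $Z(\Gamma)=P_\Gamma^{-2}$, as claimed.
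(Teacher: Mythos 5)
Your global strategy---write $Z(\Gamma)$ as a Gaussian integral over spinors attached to flags, evaluate it as an inverse determinant, and let planarity fix the signs in a combinatorial expansion---is exactly the route this paper takes, which proves $Z(\Gamma,\psi)=\det(P+Q_\psi)^{-1/2}$ by Gaussian integration and then identifies the determinant combinatorially in the appendix. The genuine gap is your key identity $\det(I-N(X))=P_\Gamma$, which is false for structural reasons. First, a matrix coupling any two flags that share a corner or an edge admits permutation cycles that do \emph{not} alternate corner- and edge-steps: $2$-cycles supported on a single edge (dimer terms, contributing constants, so that the constant term of your determinant is no longer $1$), $3$-cycles of pure corner-steps around one vertex, and longer walks passing over an edge twice. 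The expansion is therefore not indexed by embedded unions of cycles; organizing all these extra configurations is precisely the content of Corollary \ref{cor:determinants} and Propositions \ref{prop:abeliancase} and \ref{prop:countingcurves}. Second, and fatally for unit coefficients: every embedded cycle $\delta$ is picked up by a determinant once for each of its two orientations, as two distinct permutation cycles carrying the same monomial $X^{c_\delta}$ with signs $\pm1$, so the coefficient of $X^{c_\delta}$ lies in $\{-2,0,2\}$ and can never equal $1$. No sign analysis, planar or otherwise, can make a determinant of this shape equal $1+\sum_\delta X^{c_\delta}$.

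What is actually true is that the flag-indexed scalar determinant equals $P_\Gamma^{2}$, with $Z(\Gamma)$ its inverse \emph{first} power. The reason your tensor factorization misses this is that the symplectic pairing $\omega(z,w)=z_1w_2-z_2w_1$ mixes the two spinor components, so the full quadratic form is not of the form $(I-N(X))\otimes\id_{\C^2}$ with scalar $N$; in the paper it is block anti-diagonal with respect to the $z$/$w$ type decomposition, giving $\det(P+Q)=\det(W^1)\det(W^2)$ with $W^2=(W^1)^t$, and $\det(W^1)=P_\Gamma^2$---the two orientations, the doubled curves and the dimer configurations assembling exactly into the square. The object with unit coefficients is a Pfaffian, not a determinant: at trivial holonomy $(W^1)^t=-W^1$, so $\det(W^1)=\pf(W^1)^2$ and $P_\Gamma=\pf(W^1)$, a signed sum over dimer configurations on the blown-up graph. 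The hard point your proposal never engages is the resulting Kasteleyn-type sign problem for this Pfaffian; the paper resolves it (Subsection \ref{sub:recoverwestbury}) not by a rotation-number argument but by induction on the number of components of a curve, comparing the coefficient $2^k$ of a $k$-component curve in $\det(W^1)=P_\Gamma^2$ against the square of the Pfaffian. Your winding-number idea does occur in the paper, but one level down, in the proof of Proposition \ref{prop:countingcurves} for the determinant identity, where it must also control the signs of the non-embedded (doubled) curves that your expansion omitted.
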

This was generalized to all trivalent graphs by Garoufalidis and Van der Veen \cite{GaVV}.
Here below we describe our results which may be presented in two independent parts.

\subsection{Generating series with holonomies}
In gauge theory it is natural to consider spin networks whose edges are decorated by a \emph{holonomy} $\psi$ in $\mathrm{SL}_2(\C)$: we shall denote it by $\la\Gamma,c,\psi\ra$ (see Definition \ref{def:sn}) and its renormalization by $\la\la \Gamma, c,\psi\ra\ra$ (defined as above). In the present paper we generalize Westbury's theorem to the case of a general graph $\Gamma$ equipped with any holonomy $\psi$ and we give a closed formula for the generating series $Z(\Gamma,\psi)=\sum_c \la\la\Gamma,c,\psi\ra\ra X^c.$

Fix a numbering of the vertices of $\Gamma$ compatible with the planar structure, see Figure \ref{graphe} and let $\Gamma'$ be the graph obtained from $\Gamma$ by blowing up vertices, i.e. replacing \raisebox{-0.1cm}{\includegraphics[width=.6cm]{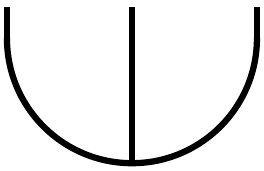}}$\to$\raisebox{-0.1cm}{\includegraphics[width=.6cm]{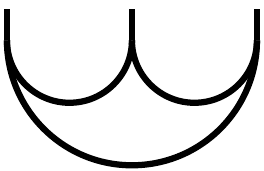}}.
Vertices of $\Gamma'$ are in 1-to-1 correspondence with the set $H$ of half-edges of $\Gamma$ and the holonomy may be seen as a map $\psi:H\to\mathrm{SL}_2(\C)$. 
Then, we set $F_h=\R^2$ for every half-edge and for any pair of half-edges $g,h$, we define $b_{g,h}:F_g\times F_h\to \C$ by the formula $b_{g,h}(z_g,w_g,z_h,w_h)=i(z_gw_h-z_hw_g)$.

We define the following $R_X$-valued quadratic form on $\bigoplus\limits_{h\in H} F_h:$
$$\mathcal{Q}(x)=2\sum_{\alpha:g\to h} b_{g,h}(\psi_g^{-1}x_g, \psi_h^{-1}x_h)X_{\alpha}+2\sum_{e:g\to h} b_{g,h}(x_g,x_h).$$
The expression $\alpha:g\to h$ means that $\alpha$ is an angle between the half-edges $g$ and $h$ such that the opposite vertex of $g$ is lower than the opposite vertex of $h$. In the same way, $e:g\to h$ means that the edge $e$ contains the half-edges $g$ and $h$ in such a way that the vertex contained in $g$ is lower than the vertex contained in $h$. Using Gaussian integrals, we prove the following result:

\begin{theorem}\label{thm1}
Given a planar graph $\Gamma$ with holonomy $\psi:H\to \mathrm{SL}_2(\C)$, the following formula holds $Z(\Gamma,\psi)=\det(\mathcal{Q})^{-1/2}$.
\end{theorem}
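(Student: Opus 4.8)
The plan is to realize the generating series as a single Gaussian integral over the space $\bigoplus_{h\in H} F_h$ and then apply the classical formula $\int e^{-\frac{1}{2}\la x, Ax\ra}\,d\mu(x) = \det(A)^{-1/2}$, where $d\mu$ is the standard Gaussian reference measure. The whole computation is formal in the variables $X_\alpha$: I split $\mathcal{Q} = \mathcal{Q}_{\mathrm{edge}} + \mathcal{Q}_{\mathrm{angle}}$ into its $X$-independent edge part and its $X$-weighted angle part, treat $e^{-\frac{1}{2}\mathcal{Q}_{\mathrm{edge}}}$ as the reference Gaussian, and expand $e^{-\frac{1}{2}\mathcal{Q}_{\mathrm{angle}}}$ as a power series in the $X_\alpha$. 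A preliminary check shows that the edge form restricted to a single edge has determinant $1$, so $\det(\mathcal{Q})|_{X=0}=1$; hence $\det(\mathcal{Q})^{-1/2}$ is a well-defined element of $R_X$ with constant term $1$, and integration against the edge form is non-degenerate.

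First I would establish an integral formula for the renormalized evaluation itself. Using the realization of the spin-$c_e/2$ representation as $\mathrm{Sym}^{c_e}(\C^2)$ and the antisymmetric pairing $b_{g,h}$ as the invariant $\epsilon$-tensor, I would write $\la\la \Gamma, c, \psi\ra\ra$ as the Gaussian moment obtained by contracting the half-edge variables according to the combinatorial structure of $\Gamma$. The holonomy enters exactly through the insertions $\psi_g^{-1}x_g$ in the angle terms, so that a Wick contraction along an edge $e:g\to h$ produces the correct edge holonomy acting between the two incident vertices.

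The core of the argument is to identify the term-by-term Gaussian expansion with the defining sum of $Z(\Gamma,\psi)$. Expanding $e^{-\frac{1}{2}\mathcal{Q}_{\mathrm{angle}}}$ produces, at multidegree $(c_\alpha)_{\alpha\in A}$ in the $X_\alpha$, a product of angle-couplings carrying the symmetry factor $1/\prod_\alpha c_\alpha!$; integrating this against the edge Gaussian performs the Wick pairings along the edges, each edge of color $c_e$ contributing the factor $c_e!$ from the number of pairings of its $c_e$ strands. These are precisely the factorials appearing in $\la\la\Gamma,c\ra\ra = \la\Gamma,c\ra \frac{\prod_e c_e!}{\prod_\alpha c_\alpha!}$, and the contracted monomial reproduces $\la\Gamma,c,\psi\ra X^c$. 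Summing over all $c$ yields $Z(\Gamma,\psi)=\int e^{-\frac{1}{2}\mathcal{Q}(x)}\,d\mu(x)$, and the classical formula gives $\det(\mathcal{Q})^{-1/2}$.

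I expect the main obstacle to be the bookkeeping of signs and the verification that they combine consistently. The orientation conventions encoded in $\alpha:g\to h$ and $e:g\to h$ (the lower/higher vertex rules) fix the signs of the antisymmetric terms $b_{g,h}$, and these local choices must assemble into the single global sign that matches $\la\Gamma,c,\psi\ra$ with no residual correction. This is exactly where planarity of $\Gamma$ is used: a compatible numbering of the vertices, together with the planar cyclic orderings at vertices, guarantees that the Wick contractions can be arranged with coherent signs. I would isolate this as a lemma comparing the sign produced by the Gaussian expansion with the sign in the Penrose/$\epsilon$-tensor definition of the evaluation, using the holonomy-free case (Westbury's theorem) as the verification that the normalization and global sign are correct, the holonomies being carried along formally.
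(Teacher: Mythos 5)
Your strategy coincides with the paper's: there too, $Z(\Gamma,\psi)$ is identified with the Wick-type pairing $\big(\exp(\tfrac12 P)^{op}\exp(\tfrac12 Q_{\psi})\big)\big|_0$, where $P=-2\sum_{e:g\to h}b^{-1}_{g,h}$ is the edge propagator acting as a differential operator and $Q_\psi=2\sum_{\alpha:g\to h}X_\alpha\, b_{g,h}(\psi_g^{-1}\cdot,\psi_h^{-1}\cdot)$ is the angle weight; the factorials $1/c_\alpha!$ from the exponential and $c_e!$ from the edge contractions match the renormalization $\la\la\Gamma,c\ra\ra$ exactly as you describe (compare Equation \eqref{eq:DefSpinNetwork}), and planarity enters because the crossing sign $(-1)^{\sum_{\{e,e'\}}c_ec_{e'}}$ of the presentation of Figure \ref{graphe} equals $1$. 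The genuine gap is in your final step, ``the classical formula gives $\det(\mathcal{Q})^{-1/2}$.'' The form $\mathcal{Q}$ has identically vanishing real part: the edge terms $2i(z_gw_h-z_hw_g)$ are purely imaginary, and so are the angle terms for real $X_\alpha$ and $\psi\in\mathrm{SL}_2(\R)$. Hence $\int_F e^{-\frac12\mathcal{Q}}\,d\mu$ is a divergent oscillatory integral, and your ``preliminary check'' that $\det(\mathcal{Q})|_{X=0}=1$ establishes non-degeneracy but not convergence; the Gaussian formula \eqref{gauss} requires positive real part. The paper supplies precisely the missing regularization: it deforms $Q_\epsilon=Q+\epsilon Q_0$ with $Q_0=\sum_h(z_h^2+w_h^2)$, uses the Fourier identity \eqref{gauss2} to rewrite $Z_\epsilon$ as a genuinely convergent integral over $F^*$ (where $\mathrm{Re}(Q_\epsilon^{-1}-P)>0$), computes $Z_\epsilon^2=\det(Q_0-Q_\epsilon P)^{-1}$, lets $\epsilon\to 0$, and only then removes the square-root/sign ambiguity by proving the formula for $\mathrm{SL}_2(\R)$ holonomies and extending to $\mathrm{SL}_2(\C)$ by analytic continuation. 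Without some such limiting argument your appeal to the classical formula is unjustified, even though the formal-series identity you want is true.

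A secondary weakness: you propose to settle the global sign ``using the holonomy-free case (Westbury's theorem) as the verification.'' Since Theorem \ref{thm1} is meant to generalize, and in this paper to reprove, Westbury's formula, that can only serve as a consistency check, not as the proof of your sign lemma. The sign must be computed directly, as the paper does via the supersymmetric reordering of Section \ref{spin-network}, which produces the crossing sign quoted above; planarity then makes it trivial. You correctly located where planarity is used, but the lemma itself still requires that computation rather than an appeal to the known answer.
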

In this expression, the determinant is computed in the canonical basis. 
We remark that strictly speaking, the formula makes sense only for holonomies in SL$_2(\R)$ because of the indeterminacy in the square root. By analytic continuation, the formula holds in general. 
The non-planar case, can be easily treated as a corollary of Theorem \ref{thm1}: see Subsection \ref{sub:nonplanarcase}. As a corollary we observe that $Z(\Gamma,\psi)^{-2}$ is a polynomial with integer coefficients in the entries of $\psi$.
In Theorem \ref{teo:abelianwestbury}, we provide a combinatorial interpretation of our formula which extends Westbury's formula in the case when $\psi$ takes values in the diagonal subgroup of $\mathrm{SL}_2(\C)$.

\subsection{Asymptotics from integral formulas}
In his book \cite{wigner}, Wigner showed that the square of a $6j$-symbol may be computed by a simple integral formula over $4$ copies of SU$_2$. Barrett observed that this formula may be generalized to any graph \cite{barrett}. In Section \ref{integrals}, after recalling the integral formula, we compute the generating series of squares of spin networks:
\begin{theorem}
Let $(\Gamma,c,\psi)$ be a spin network equipped with a holonomy with values in $\mathrm{SL}_2(\C)$.
Define $$[\Gamma,c,\psi]=\frac{\la\Gamma,c,\psi \ra^2} {\prod\limits_{v:(e_1,e_2,e_3)} \la \Theta,c_{e_1},c_{e_2},c_{e_3}\ra}$$ where by $v:(e_1,e_2,e_3)$ we indicate the edges touching $v$ and $\la \Theta,c_{e_1},c_{e_2},c_{e_3}\ra$ is the value of the $\Theta$-graph colored by $c_{e_1},c_{e_2},c_{e_3}$. Considering the generating series  $W(\Gamma,\psi)=\sum_{c}[\Gamma,c,\psi]\ Y^{c_e}$,
the following holds in $R_Y=\C[Y_e,e\in E]$:
\begin{eqnarray*}
W(\Gamma,\psi)&=&\int_{G^V}\frac{\dd g}{\prod\limits_{e=(h_1,h_2)=(v,w)}\det(\psi_{h_2} g_w\psi_{h_2}^{-1}-\psi_{h_1} g_v\psi_{h_1}^{-1}Y_e)}
\end{eqnarray*}

\end{theorem}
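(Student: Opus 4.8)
The plan is to take as input the integral formula recalled at the beginning of this section, which expresses the normalized square $[\Gamma,c,\psi]$ as an integral over one copy of $G=\mathrm{SU}_2$ per vertex of a product of characters, one for each edge. Writing $\chi_n(u)=\tr\mathrm{Sym}^n(u)$ for the character of the $n$-th symmetric power of the standard representation (a holomorphic function on $\mathrm{SL}_2(\C)$), I expect the formula to read
\[
 [\Gamma,c,\psi]=\int_{G^V}\prod_{e=(h_1,h_2)=(v,w)}\chi_{c_e}(u_e)\,\dd g,\qquad u_e=(\psi_{h_2}g_w\psi_{h_2}^{-1})^{-1}\,\psi_{h_1}g_v\psi_{h_1}^{-1},
\]
the division by $\prod_v\la\Theta,c_{e_1},c_{e_2},c_{e_3}\ra$ being exactly what turns the invariant vector at each vertex into an orthogonal projector $\int_G\bigotimes_i\rho_{c_{e_i}}(g_v)\,\dd g_v$ onto the intertwiner space, so that contracting along each edge collapses the two projectors into the single trace $\chi_{c_e}(u_e)$. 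The first task is therefore to establish this formula with the correct $u_e$, since this is where the $\Theta$-denominators enter.

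Granting this, I would form the generating series, interchange the sum over colorings with the integral, and factor the integrand over edges:
\[
 W(\Gamma,\psi)=\sum_c\Big(\int_{G^V}\prod_e\chi_{c_e}(u_e)\,\dd g\Big)\prod_e Y_e^{c_e}=\int_{G^V}\prod_{e}\Big(\sum_{c_e\ge 0}\chi_{c_e}(u_e)\,Y_e^{c_e}\Big)\dd g.
\]
The factorization is legitimate because the colors $c_e$ run independently over $\N$: non-admissible colorings contribute $0$ automatically, as integrating $g_v$ over $G$ projects onto $\mathrm{Inv}\big(\bigotimes_i V_{c_{e_i}}\big)$, which is trivial unless the colors meeting at $v$ are admissible. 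The exchange of $\sum$ and $\int$ is justified either coefficient by coefficient (each monomial in $Y$ gets a single contribution) or analytically for $|Y_e|$ small, using that the eigenvalues of $u_e$ stay bounded as $g$ ranges over the compact set $G^V$.

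The computation then closes with the classical $\mathrm{SU}_2$ generating identity $\sum_{n\ge0}\chi_n(u)\,Y^n=\det(\id-Yu)^{-1}$, valid for any $u\in\mathrm{SL}_2(\C)$ (immediate from $\chi_n(u)=\sum_{j=0}^n\lambda^{n-2j}$ for eigenvalues $\lambda^{\pm1}$, equivalently from $\det(\id-Yu)=(1-\lambda Y)(1-\lambda^{-1}Y)$). Applying it edge by edge and using $\det(\psi_{h_2}g_w\psi_{h_2}^{-1})=1$ to rewrite
\[
 \det(\id-Y_e u_e)=\det\big(\psi_{h_2}g_w\psi_{h_2}^{-1}-\psi_{h_1}g_v\psi_{h_1}^{-1}Y_e\big)
\]
reproduces exactly the determinant in the statement.

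The main obstacle is not the generating-series manipulation, which is routine, but pinning down the integral formula in precisely the right form: identifying the group element $u_e$ attached to each edge — in particular the placement of the two half-edge holonomies and which endpoint carries the inverse — and verifying that the $\Theta$-normalization yields honest orthogonal projectors at the vertices. Once the formula is stated with the correct $u_e$, matching $\det(\id-Y_e u_e)$ to the claimed determinant is a one-line $2\times 2$ computation, and the only analytic point to monitor is the interchange of sum and integral, which follows from the uniform boundedness of the spectra of the $u_e$ over the compact domain $G^V$.
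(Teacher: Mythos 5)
Your proposal is correct and follows essentially the same route as the paper: the $\Theta$-normalization turns each vertex tensor into the orthogonal projector $\int_G\bigotimes_i\rho_{c_{e_i}}(g_v)\,\dd g_v$ (via the paper's lemma $h(\epsilon_{a,b,c},\epsilon_{a,b,c})=|\la\Theta,a,b,c\ra|$), giving $[\Gamma,c,\psi]=\int_{G^V}\prod_e\tr_{c_e}(u_e)\,\dd g$ with exactly your $u_e$ (the paper's $\tr_{c_e}(\psi_{e,v}g_v\psi_{e,v}^{-1}\psi_{e,w}g_w^{-1}\psi_{e,w}^{-1})$ up to cyclic permutation), after which the coefficient-wise summation, the identity $\sum_n\tr_n(A)Y^n=\det(\id-YA)^{-1}$, and $\det(\psi_{h_2}g_w\psi_{h_2}^{-1})=1$ yield the stated determinant, with non-admissible colorings vanishing for the reason you give. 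The only minor difference is that the paper reaches the per-edge trace by also inserting an edge projector $\boP_e=\int_G\rho_{c_e}(g)\otimes\rho_{c_e}(g)\,\dd g$, integrating over $G^{V\amalg E}$, and then eliminating the edge variables with $\int_G\tr_c(Ag)\tr_c(Bg)\,\dd g=\frac{1}{c+1}\tr_c(AB^{-1})$ (which cancels the factors $c_e+1$), whereas you contract the edge vectors $\varpi_{c_e}$ directly against the vertex projectors; both are instances of the same projector-averaging mechanism.
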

Using Kirillov trace formula, the integral formulas may be transformed in order to apply the stationary phase approximation. This method was applied by \cite{barrett,freidel} to compute the asymptotical behavior of some spin networks but it faces some technical difficulties because of the existence of so-called ``degenerate configurations". 
In Section \ref{asymptotic} we propose a different transformation which allows us to treat uniformly all configurations corresponding to critical points.
We describe precisely the critical points of the integrand and compute the associated Hessian. Under suitable genericity hypotheses on $\Gamma$ and $c$ described in Section \ref{hypo}, we compute the dominating terms in the asymptotical development of $[\Gamma,kc,1]$ for general $\Gamma$.
Fix a trivalent graph $\Gamma$ and a coloring $c:E\to \N$. 

Let $I$ be the set of maps $P$ from oriented edges of $\Gamma$ to $S^2$ which satisfy the following relations:
\begin{itemize}
\item[-] Denoting by $-e$ the edge $e$ with opposite orientation, we have $P_{-e}=-P_e$.
\item[-] For all vertex $v$ with outgoing edges $e_1,e_2,e_3$ we have $\sum_i c_{e_i}P_{e_i}=0$.
\end{itemize}

Given $P\in I$ we define $r_P(\xi)=\sum_e c_e || P_e\times \xi||^2$ for $\xi\in \R^3$ and $q_P(\xi_v)=\sum_{e:(v,w)}c_e || P_e\times (\xi_v-\xi_w)||^2$ for $(\xi_v)\in \bigoplus_{v\in V}\R^3$. In these formulas, we denoted by $\times$ the bracket in the cross-product in $\R^3$.

Given a pair $(P,Q)$ of non-isometric elements of $I$, we define its \emph{phase function} $\tau$  in Subsection \ref{hypo} as a map from $E$ to $S^1/\{\pm 1\}$. Then we set  for $(\xi_v)\in \bigoplus_{v\in V}\R^3$:
$$q^{\kappa}_{P,Q}(\xi_v)=\sum_e c_e\big(\frac{\kappa^2\tau_e^2+1}{\kappa^2\tau_e^2-1}||Q_e\times(\xi_v-\xi_w)||^2+2i\la Q_e,\xi_v\times \xi_w\ra\big).$$
Then, given a quadratic form $q$ on $\mathbb{R}^n$, we denote by $\det'(q)$ the determinant of the restriction of the quadratic form to the orthogonal of the kernel of $q$, that is the product of all non-zero eigenvalues of the matrix of $q$. We also set $\det'(q_{P,Q})=\lim_{\kappa\to 1}(\kappa-1)^{-3}\det'(q^{\kappa}_{P,Q})$.

\begin{theorem}\label{asymptotic} Let $(\Gamma,c)$ be a colored graph satisfying the conditions of Subsection \ref{hypo}. Then denoting by $N$ the opposite of the Euler characteristic of $\Gamma$, one has 
\begin{eqnarray*}
[\Gamma,kc]&=&\frac{(2N)^{3/2}}{(\pi k^3)^{N-1}}\Big(
\sum_{P\in I}\frac{\det(r_P)^{1/2}}{\det'(q_{P})^{1/2}}\\
&&+\sum_{(P,Q)\in I^2/\pm 1, P\ne Q}2\textrm{Re}\Big(
\frac{i^{N}\det(r_P)^{1/2}e^{i\sum_e (kc_e+1)\theta_e}}
{\det'(q_{P,Q})^{1/2}\prod_e \sin(\theta_e)}\Big)+O(k^{-1})\Big)
\end{eqnarray*}
\end{theorem}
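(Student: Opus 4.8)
The plan is to start from the integral formula of the previous theorem, specialized to trivial holonomy $\psi=1$, which expresses $W(\Gamma,1)=\sum_c [\Gamma,kc]Y^{c_e}$ as an integral over $G^V=\mathrm{SU}_2^V$. To extract the coefficient $[\Gamma,kc]$ I would apply the Kirillov trace formula to rewrite each character/determinant factor as an integral of an exponential over coadjoint orbits (spheres $S^2$ of radius proportional to $c_e$), turning the whole expression into an integral over $\bigoplus_{e}S^2\times G^V$ with a phase that is linear in $k$. After rescaling the coloring by $k$, the integrand takes the form $\int e^{ik\,S(P,g)}\,a(P,g)\,\dd P\,\dd g$ for a suitable action $S$ and amplitude $a$, so that the large-$k$ behavior is governed by stationary phase. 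The power $(\pi k^3)^{-(N-1)}$ and the prefactor $(2N)^{3/2}$ should emerge from the dimension count of the Gaussian integration after removing the global symmetry, where $N=-\chi(\Gamma)$ counts the independent integration directions.

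Next I would identify the critical points of the phase $S$. The computation should show that the stationarity conditions in the $G$-variables force the edge-vectors to close up around each vertex, i.e. $\sum_i c_{e_i}P_{e_i}=0$ at every $v$, which is exactly the defining relation of the set $I$; and the stationarity in the sphere variables pins the configuration up to the residual rotation symmetry. Thus critical points are indexed by elements $P\in I$, and pairs of distinct orbits $(P,Q)\in I^2/\pm1$ contribute oscillatory cross-terms carrying the phase $e^{i\sum_e(kc_e+1)\theta_e}$. I would then compute the Hessian of $S$ at each critical point: the diagonal contributions reproduce $\det(r_P)$ (from the sphere directions) and $\det'(q_P)$ (from the vertex directions, with the prime accounting for the global rotation kernel), while for the off-diagonal pairs the Hessian degenerates and must be regularized, which is precisely what the $\kappa\to 1$ limit defining $\det'(q_{P,Q})$ encodes — here the factor $\prod_e\sin(\theta_e)$ and the phase-function $\tau$ enter through the relative geometry of $P$ and $Q$.

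The final step assembles the stationary phase contributions: each critical point gives a term $(2\pi/k)^{d/2}\,a(P)\,e^{ikS(P)}/\sqrt{|\det \mathrm{Hess}|}$ up to a phase from the signature, and summing over $P\in I$ and over the pairs $(P,Q)$ yields the two sums in the statement, with the $i^N$ factor coming from the signature of the cross-term Hessian and the $2\,\mathrm{Re}(\cdot)$ from combining a pair with its conjugate. The genericity hypotheses of Subsection~\ref{hypo} guarantee that all critical points are nondegenerate in the appropriate sense (after quotienting by symmetry and after the $\kappa$-regularization for the mixed terms), so that the stationary phase expansion is valid and the remainder is genuinely $O(k^{-1})$.

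I expect the main obstacle to be the treatment of the cross-terms $(P,Q)$ with $P\ne Q$. For these the naive Hessian of the phase is degenerate — this is the ``degenerate configurations'' problem alluded to in the introduction — and the whole point of the chosen transformation is to handle them uniformly. Carefully justifying the regularized determinant $\det'(q_{P,Q})=\lim_{\kappa\to1}(\kappa-1)^{-3}\det'(q^{\kappa}_{P,Q})$, matching its signature to the clean $i^N$ phase, and verifying that the $\prod_e\sin(\theta_e)$ denominator arises correctly from the orbit measures will be the delicate analytic heart of the argument; the diagonal $P=Q$ terms, by contrast, should follow from a comparatively standard nondegenerate stationary phase computation.
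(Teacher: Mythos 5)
Your high-level skeleton (integral formula for $[\Gamma,kc]$, stationary phase, critical points indexed by pairs in $I\times I$, Hessian determinants $r_P$, $q_P$, $q_{P,Q}$) matches the paper, but your very first step is the route the paper deliberately rejects. You propose to apply the Kirillov trace formula, turning each character into an oscillatory integral over coadjoint $S^2$-orbits with a phase $e^{ikS}$ linear in $k$; that is precisely the method of \cite{barrett,freidel}, and the introduction states that it faces the ``degenerate configurations'' problem, which is why the paper instead uses the coherent-state identity of Lemma \ref{coh}, $\tr_n(g)=(n+1)\int_{S^3}\la v,\rho(g)v\ra^n\dd v$. This yields an integral over $G^V\times(S^3)^E$ with the \emph{complex} phase $F(g,u)=\sum_e c_e\ln\la g_vu_e,g_wu_e\ra$, whose real part is non-positive rather than zero, and the asymptotics are extracted via H\"ormander's stationary phase theorem for such phases (Theorem 7.7.5 of \cite{H}), after quotienting by the $(S^1)^E\times G\times G$ symmetry --- not via a purely oscillatory expansion with smooth amplitude as you assume.

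The genuine gap is that you have the degeneracy structure exactly backwards, and this is fatal for your plan. You expect the cross terms $(P,Q)$ with $P\ne Q$ to be the degenerate ones and the diagonal terms $P=Q$ to follow from ``comparatively standard nondegenerate stationary phase.'' In fact, by Hypothesis \ref{H2} the cross terms have $\tau_e^2\ne 1$ on every edge, so the $\lambda$-block of the Hessian in Equation \eqref{hessien} is there invertible; the only residual kernel is the corank-$6$ one controlled by Hypothesis \ref{H3}, and the $(\kappa-1)^{-3}$ in the definition of $\det'(q_{P,Q})$ merely compensates the three symmetry eigenvalues resolved through $r_P$ --- it does not cure a degeneracy transverse to symmetries. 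The truly degenerate configurations are the \emph{diagonal} ones: there $\tau_e=\pm 1$, the Hessian degenerates along the sphere coordinates $\lambda_e$, and in your Kirillov picture the orbit amplitude behaves like $\prod_e 1/\sin\theta_e$, which blows up precisely at these points, so no standard stationary phase applies. Since the diagonal terms produce the leading non-oscillatory sum $\sum_{P\in I}\det(r_P)^{1/2}/\det'(q_P)^{1/2}$, your scheme fails exactly at the dominant contribution. The missing idea is the paper's uniform deformation $\tau_{e\kappa}=\kappa\tau_e$, which simultaneously resolves the $\bog\times\bog$ symmetry kernel via $q_\kappa|_{\bog\times\bog}=4(\kappa-1)r_P+o(\kappa-1)$ at \emph{all} critical points and, in the diagonal case, isolates the $\lambda$-degeneracy through $q^{\kappa}=\frac{\kappa^2+1}{\kappa^2-1}q_Q+q'$ with $I(q^{\kappa})\simeq(\kappa-1)^{3N-3/2}I(q_Q)$, yielding the finite ratio after cancellation against $I(\sigma_\kappa)$ and the symmetry factor $(4\kappa-4)^{3/2}$.
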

The above formula was numerically tested in the case of the tetrahedron and of course  its results are in line with the known asymptotics (\cite{roberts}, \cite{GaVV}, \cite{barrett}, \cite{charles}).  
The non-degeneracy conditions  of Subsection \ref{hypo} are equivalent to the non-vanishing of all determinants in the formula of Theorem \ref{asymptotic}.

\begin{itemize}
\item[-]
The quantity $\det(r_P)$ is zero only if the configuration $P$ is planar which can occur only for very special values of $c$.
\item[-]
The non-vanishing of $\det'(q_P)$ is equivalent to the infinitesimal rigidity of the configuration $P$. In particular, the hypothesis do not hold if the set $I$ is not discrete: this happens for instance for the regular cube or more generally, the cube whose edges are colored by the lengths of Bricard's flexible octahedron.
\item[-]
We do not have a geometric interpretation of the determinant $\det'(q_{P,Q})$ but in our numerical experiments on the spin-networks formed by the $1$-skeleton of a tetrahedron and of a triangular prism this determinant was non-zero. In any case, the conditions define a Zariski open set of configurations.
\end{itemize}
We believe that describing when the non-degeneracy conditions hold is a very difficult task as it contains the problem of the flexibility of polyhedra, a notoriously hard problem. Still, we expect that for planar graphs whose colors correspond to the lengths of a generic convex configuration of the dual graph, there is a simple geometric condition ensuring that the non-degeneracy conditions hold but this question is not addressed in this article.

\subsection{Some questions}
\begin{itemize}
\item[-] The generating series $Z(\Gamma,\psi)^2$ has coefficients which are integral polynomials in the entries of $\psi$. Is it still true for $Z(\Gamma,\psi)$ as suggested by the abelian case?
\item[-] Is there a combinatorial interpretation of $Z(\Gamma,\psi)$ for general $\psi$?
\item[-] Find a direct relation between the series $Z(\Gamma,\psi)$ and $W(\Gamma,\psi)$?
\item[-] Find sufficient conditions on $\Gamma$ and $c$ which ensure that the non-degeneracy hypothesis hold, in particular in the case of convex polyhedra.
\item[-] Interpret geometrically the leading terms in the asymptotic formula of Theorem \ref{asymptotic}.
\end{itemize}
\subsection{Acknoweldgements}
The authors would like to thank Fr\'ed\'eric Faure for his comments on coherent states, leading us to Lemma \ref{coh}.
This work was supported by the French ANR project ANR-08-JCJC-0114-01.

\section{Equivalent descriptions of spin network evaluations}\label{spin-network}

Let $\Gamma$ be a trivalent graph with possibly loops and multiple edges. We denote by $E$ the set of edges, $V$ the set of vertices and we divide each edge $e\in E$ into two subarcs called \emph{half-edges}. We will then let $H$ be the set of half-edges commonly described as pairs $(e,v)$ where $e$ is an edge and $v$ is an end of $e$. We shall moreover orient a priori each half-edge $h=(e,v)$ so that it goes out of the vertex $v$. We assume that for each vertex the set of half-edges incoming to that vertex have a cyclic order. We call \emph{angle} a pair of half-edges touching the same vertex and denote by $A$ the set of angles. In all the article, we will suppose that $\Gamma$ is connected and contains at least a vertex; we will denote by $N$ the opposite of the Euler characteristic of $\Gamma$: we have $\#V=2N, \#E=3N$ and $\#H=\#A=6N$.

We will denote by \emph{admissible coloring} a map $c:E\to \N$ satisfying the triangle conditions:
\begin{equation}\tag{T}
\forall v:(i,j,k),\,\, c_i+c_j+c_k\in 2\N\textrm{ and } c_i\le c_j+c_k
\end{equation}
For convenience, we wrote $v\!:\!(i,j,k)$ meaning that the edges $i,j,k$ are incoming at $v$ with that cyclic order.
Associated to each coloring $c$ there is an \emph{internal coloring}, i.e. a map from $A$ to $\mathbb{N}$ also denoted by $c$ and defined as follows: if $\alpha$ is the angle between edges $i$ and $j$ around $v$ then we set $c_{\alpha}=\frac{c_i+c_j-c_k}{2}$. We remark than one can recover the original coloring from the internal coloring: to avoid confusion, we will denote edges with latin letters and angles with greek ones.

Let $(V,\omega)$ be a complex symplectic vector space of rank 2 and SL($V$) be its symmetry group. A \emph{discrete connection} is a map $\hol:H\to$ SL$(V)$; we define the \emph{gauge group} as the group of maps $\{g:V\amalg E\to \textrm{SL}(V)\}$. An element $g$ of the gauge group acts on a discrete connection $\hol$ as follows: for each $h=(e,v)\in H$, ($g\cdot \hol)_h= g_e\hol_h g_v^{-1}$. Two discrete connections $\hol_1,\hol_2: E\to \textrm{SL}(V)$ are said to be \emph{gauge equivalent} if they are in the same orbit of the gauge group. 

\begin{definition}[Holonomy]
\begin{itemize}
\item[-]
A holonomy on $\Gamma$ is an equivalence class of discrete connections $\hol$ on $\Gamma$. \item[-]
The trivial holonomy (denoted by 1) is the class of the constant discrete connection defined by $\hol_h=1\in\mathrm{SL}(V)$ for all half-edges $h\in H$.
\item[-] Let $\gamma$ be an oriented path in $\Gamma$ described as a sequence of half-edges $h_1,\ldots,h_n$. We define the holonomy $\hol(\gamma)$ of $\hol$ along $\gamma$ to be the product $\hol_{h_n}^{\epsilon_n}\cdots \hol_{h_1}^{\epsilon_1}$ where $\epsilon_i$ is 1 or -1 if the half-edge is oriented coherently with $\gamma$ or not. If $\hol_1$ and $\hol_2$ are gauge equivalent and $\gamma$ is closed then $\hol_1(\gamma)$ and $\hol_2(\gamma)$ are conjugate.
\end{itemize} 
\end{definition}

In the following section, we give two equivalent descriptions of the spin network evaluation which consists in associating to a triple $(\Gamma,c,\hol)$ a complex number $\la \Gamma,c,\hol\ra$. We will refer to these descriptions as the abstract and computational descriptions.

\subsection{Abstract description}\label{sub:abstractdef}
Let $(V,\omega)$ be a complex symplectic vector space of rank 2. We will consider that $V$ is an odd superspace. We refer to Appendix \label{super} for a basic review of supersymmetry adapted to our purposes.

The symplectic form is considered as a supersymmetric map $\omega:V\otimes V\to \C$ in the sense that $\omega\circ\phi_{(12)}=\omega$. For any integer $n\in \N$, $\omega$ induces a map $\omega^{\otimes n}:V^{\otimes n}\otimes V^{\otimes n}\to \C$ defined by the formula 
$$\omega^{\otimes n}(v_1\otimes\cdots\otimes v_n,w_1\otimes\cdots\otimes w_n)=\prod_{i=1}^n \omega(v_{n+1-i},w_i).$$

Denote by $V_n$ the subspace of $V^{\otimes n}$ consisting in anti-supersymmetric tensors (or, equivalently, symmetric in the standard sense); the form $\omega^{\otimes n}$ restricts to a supersymmetric form $\omega_n:V_n\otimes V_n\to \C$. The vector space $V_n$ (of parity $n$) is the $(n+1)$-dimensional irreducible representation of the group SL($V$): we will sometimes use the notation $\rho_n:\textrm{SL}(V)\to\textrm{End}(V_n)$. Denote by $\omega^{-1}$ the unique element of $V\otimes V$ such that the contraction of the two middle terms in $\omega\otimes \omega^{-1}$ is the identity of $V$. 

Let $a,b,c$ be three integers. It is well known that the set of SL($V$)-invariant elements in $V_a\otimes V_b\otimes V_c$ is 1-dimensional if $a,b,c$ satisfy the triangle conditions (T) unless it is 0.
 One can find an explicit generator $\epsilon_{a,b,c}\in V_a\otimes V_b\otimes V_c$ given by 
 the symmetrization of the element
 $$\omega_{a,b,c}=(\omega^{-1})^{\otimes (a+b-c)/2}\otimes (\omega^{-1})^{\otimes (b+c-a)/2}\otimes (\omega^{-1})^{\otimes (a+c-b)/2}$$
 where the supersymmetric tensor product is reordered as in Figure \ref{inv}. Remark that the sign of the permutation which reorders the $a+b+c$ factors in this tensor product is +1.
 \begin{figure}
\centering
  \def\svgwidth{\columnwidth}
 \executeiffilenewer{omegas.svg}{omegas.pdf}%
 {inkscape -z -D --file=omegas.svg %
 --export-pdf=omegas.pdf --export-latex}%
 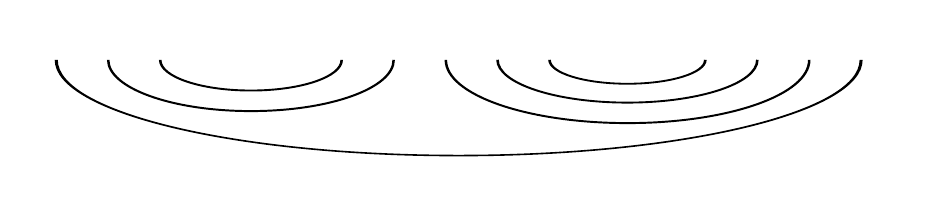%

  \caption{Invariant tensor at vertices with $a=3,b=5,c=4$.\label{inv}}
\end{figure}

 Then we define $\epsilon_{a,b,c}=(\Pi_a\otimes \Pi_b\otimes \Pi_c)\omega_{a,b,c}$ where $\Pi_a$ is the anti-supersymmetrization (i.e. standard symmetrization) map projecting $V^{\otimes a}$ onto $V_a$.

One may check that this element is supersymmetric as if we permute cyclically $a,b$ and $c$ to the left the result is multiplied by $(-1)^a=(-1)^{a(b+c)}$ which is the sign of the cycle $V_a\otimes V_b\otimes V_c\to V_b\otimes V_c\otimes V_a$ according to the supersymmetric rule.

\begin{definition}[Spin Network]
Let $\Gamma$ be a trivalent graph and $c:E\to\N$ be an admissible coloring. Then, $\la \Gamma,c,1\ra$ is the result of the supersymmetric contraction of 
$$\la\bigotimes\limits_{e\in E} \omega_{c_e},\bigotimes\limits_{v\in V,v:(i,j,k)}\epsilon_{c_i,c_j,c_k}\ra.$$ 
\end{definition}
By supersymmetric contraction, we mean that we reorder the tensors on the right hand side according to the sign rule so that factors corresponding to the same edge are consecutive, and then contract with the maps $\omega_n$.

Given a discrete connection $\hol: H\to \textrm{SL}(V)$, let us define the following endomorphism $\textrm{Hol}_{\psi}$ by the formula: $$\bigotimes\limits_{v\in V,v:(i,j,k)} \rho_{c_i}(\hol_{i,v})\otimes\rho_{c_j}(\hol_{j,v})\otimes\rho_{c_k}(\hol_{k,v})\in\textrm{End}\left(\bigotimes\limits_{v\in V,v:(i,j,k)} V_{c_i}\otimes V_{c_j}\otimes V_{c_k}\right)$$

\begin{definition}[Spin Network with holonomy]\label{def:sn}
Let $\Gamma$ be a trivalent graph, $\hol:H\to \textrm{SL}(V)$ be a discrete connection and $c:E\to\N$ be an admissible coloring. Then, $\la \Gamma, c,\hol\ra$ is the result of the following supersymmetric contraction:

$$\la \bigotimes\limits_{e\in E} \omega_{c_e},\textrm{Hol}_{\hol}\big(\bigotimes\limits_{v\in V,v:(i,j,k)}\epsilon_{c_i,c_j,c_k}\big)\ra
.$$ 
\end{definition}
One can check directly that this definition does not depend on the gauge equivalence class of $\psi$.
\subsection{Computational description}
The computational description follows directly from the abstract one, taking care of the signs.

Set $V=\C^2$ and $\omega(v,w)=\det(v,w)$. Then $V^*=\C^2$ and we denote by $z$ and $w$ the corresponding coordinates. In this way, $\omega^{-1}\in V\otimes V$ corresponds to the linear polynomial on $V^*\times V^*$ which in coordinates $(z_1,w_1,z_2,w_2)$ reads as $z_2 w_1-z_1w_2$. 

The form $\omega_n:V_n\otimes V_n\to \C$ is a linear form on the space of homogeneous polynomials of bidegree $(n,n)$ on $V^*\times V^*$. Here we identify homogeneous polynomials and symmetric tensors via:
$$z^iw^{n-i}\to \frac{1}{n!}\sum_{\sigma\in \mathfrak{S}_n} X_{\sigma_1}\otimes \cdots \otimes X_{\sigma_n}$$ where $X_k=z$ if $k\leq i$ and $w$ otherwise).
One checks directly that in coordinates, $\omega_n$ is expressed by:
$$\omega_n=\frac{1}{n!^2}(\frac{\partial}{\partial z_1}\frac{\partial}{\partial w_2}-\frac{\partial}{\partial z_2}\frac{\partial}{\partial w_1})^n$$

In the same way, the element $\epsilon_{a,b,c}\in V_a\otimes V_b\otimes V_c$ corresponds to a polynomial on $V^*\times V^*\times V^*$ which in coordinates $(z_1,w_1,z_2,w_2,z_3,w_3)$ reads as 
$$\epsilon_{a,b,c}=(z_2 w_1-z_1w_2)^{\frac{a+b-c}{2}}(z_3 w_2-z_2 w_3)^{\frac{b+c-a}{2}}(z_3 w_1-z_1w_3)^{\frac{a+c-b}{2}}.$$

\begin{figure}
\centering
  \def\svgwidth{\columnwidth}
 \executeiffilenewer{tetraedre.svg}{tetraedre.pdf}%
 {inkscape -z -D --file=tetraedre.svg %
 --export-pdf=tetraedre.pdf --export-latex}%
 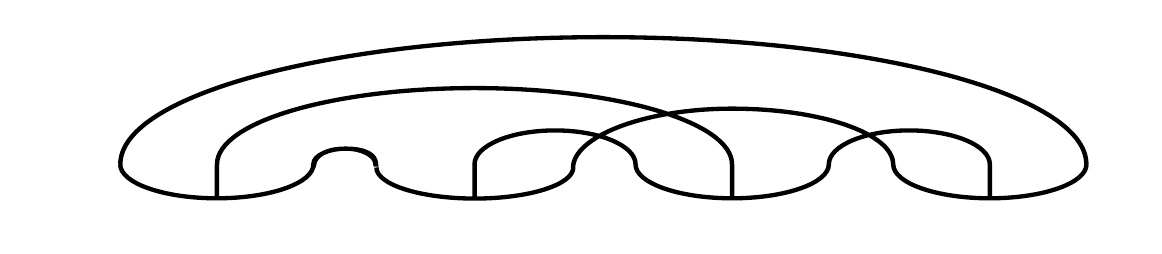%

  \caption{Planar presentation of the tetrahedron\label{graphe}}
\end{figure}

Finally, let us compute the sign needed to perform the final contraction. Suppose that the trivalent graph is presented as in Figure \ref{graphe}: this presentation induces an ordering of the set of half-edges $H$. For any edge $e\in E$, let $e_1$ and $e_2$ be the respective left and right half-edges of $e$ and for any vertex $v$ in $V$, let $v_1,v_2,v_3$ be the three half-edges incoming to $v$ in increasing order.
Writing in line the tensor product of invariant elements $\epsilon_{a,b,c}$ give a big tensor which has to be reordered in such a way that half edges are packed in pairs. Let $C$ be the set of pairs of crossing edges: the sign of this permutation is $(-1)^{\sum_{\{e,e'\}\in X} c_e c_{e'}}$.

We compute then:
\begin{eqnarray}\label{eq:DefSpinNetwork}
\lefteqn{\la \Gamma,c,1\ra=(-1)^{\sum_{\{e,e'\}\in X} c_e c_{e'}}\prod_{e} \frac{1}{c_e!^2}\left(\frac{\partial}{\partial z_{e_1}}\frac{\partial}{\partial w_{e_2}}-\frac{\partial}{\partial z_{e_2}}\frac{\partial}{\partial w_{e_1}}\right)^{c_e}}\nonumber\\
& &\prod_{v}(z_{v_2} w_{v_1}-z_{v_1}w_{v_2})^{\frac{c_{v_1}+c_{v_2}-c_{v_3}}{2}}(z_{v_3} w_{v_2}-z_{v_2} w_{v_3})^{\frac{c_{v_2}+c_{v_3}-c_{v_1}}{2}}\\
& &(z_{v_3} w_{v_1}-z_{v_1}w_{v_3})^{\frac{c_{v_1}+c_{v_3}-c_{v_2}}{2}}.\nonumber
\end{eqnarray}

In order to use the machinery of Gaussian integration, it will be more comfortable to introduce $i=\sqrt{-1}$ in our  formulas. The formula is unchanged if we replace $\omega_n$ and $\epsilon_{a,b,c}$ by $\ba{\omega}_n$ and $\ba{\epsilon}_{a,b,c}$ where we set 
$$\ba{\omega}_n=\frac{i^n}{n!^2}(\frac{\partial}{\partial z_1}\frac{\partial}{\partial w_2}-\frac{\partial}{\partial z_2}\frac{\partial}{\partial w_1})^n$$
$$\ba{\epsilon}_{a,b,c}=i^{\frac{a+b+c}{2}}(z_1 w_2-z_2w_1)^{\frac{a+b-c}{2}}(z_2 w_3-z_3 w_2)^{\frac{b+c-a}{2}}(z_1 w_3-z_3w_1)^{\frac{a+c-b}{2}}.$$

The formula for $\la \Gamma,c,\psi\ra$ is obtained from Equation \eqref{eq:DefSpinNetwork} by replacing the variables $(z_i,w_i)$ at the vertex $v$ with $\psi_{i,v}^{-1}(z_i,w_i)$.

\section{Generating series via gaussian integrals}

In Subsection \ref{sub:proofseries} we will use gaussian integration to prove Theorem \ref{thm1} in the case of a planar graph $\Gamma$. This is a generalization of Westbury's result to the case when graphs are equipped with holonomies.
In Subsections \ref{sub:abeliancase} we will provide a topological interpretation of the theorem when the holonomy is diagonal and recover Westbury's original result (always in the case of planar graphs). In Subsection \ref{sub:nonplanarcase} we will extend Theorem \ref{thm1} to the case of non-planar graphs. 

\subsection{Computing the generating series of a spin network}\label{sub:proofseries}
Let $\Gamma$ be a planar trivalent graph that we present as in Figure \ref{graphe}. It may be easily seen that the sign correction due to crossings in Equation \eqref{eq:DefSpinNetwork} is $1$.
Let $F_h$ be a copy of $\R^2$ associated to each half-edge $h$, we will always consider the standard density on these spaces and then omit it in the notation, and let $F=\bigoplus_h F_h$. For two half-edges $g,h$ we set $b_{g,h}:F_{g}\times F_{h}\to \C$ as being given in coordinates by the expression $i(z_gw_h-z_hw_g)$ and $b^{-1}_{g,h}:F^*_{g}\times F^*_{h}\to \C$ as $-i(\frac{\partial}{\partial z_g}\frac{\partial}{\partial w_h}-\frac{\partial}{\partial z_h}\frac{\partial}{\partial w_g})$. 

Define quadratic forms $P$ on $F^*$ and $Q$ on $F$ respectively by $P=-2\sum\limits_{e:g\to h}b^{-1}_{g,h}$ and $Q=2\sum\limits_{\alpha:g\to h} X_{\alpha}b_{g,h}$. The notation $\alpha:g\to h$ and $e:g\to h$ means that $\alpha$ and $e$ are composed of the half-edges $g,h$ which appear in that order from left to right in Figure \ref{graphe}. For the moment, $X_{\alpha}$ should be interpreted as a real parameter.  
Note that the quadratic form $P^{-1}$ on $F$ is expressed by $P^{-1}=2\sum\limits_{e:g\to h}b_{g,h}$.

Consider $P$ as a differential operator $P^{op}$ on $C^{\infty}(F,\C)$ and develop $(\exp(\frac{1}{2}P)^{op} \exp(\frac{1}{2}Q))|_0$ (see Appendix \ref{gaussian}). Collecting monomials in the variables $X_{\alpha}$, by Equation \eqref{eq:DefSpinNetwork} one sees that the coefficient of $\prod_{\alpha} X_\alpha^{c_\alpha}$ is $\la\la\Gamma,c\ra\ra$ (defined in the Introduction). Therefore we have: 
$$Z(\Gamma,1)=\big(\exp(\frac{1}{2}P)^{op} \exp(\frac{1}{2}Q)\big)|_0.$$

Consider a deformation $Q_{\epsilon}$ of $Q$ which is non-degenerate and has positive real part. For concreteness, we can pick $Q_0=\sum_h (z_h^2+w_h^2)$ and set $Q_{\epsilon}=Q+\epsilon Q_0$. We define $Z_{\epsilon}=\big(\exp(\frac{1}{2}P)^{op} \exp(\frac{1}{2}Q_{\epsilon})\big)|_0$ so that $Z(\Gamma,1)=\lim_{\epsilon\to 0} Z_{\epsilon}$.
Then replacing $Q$ by $Q_{\epsilon}^{-1}$ and $\frac{1}{2}P$ by $\exp(\frac{1}{2}P)$, Formula \eqref{gauss2} gives

$$Z_{\epsilon}=(2\pi)^{-n/2}\det(Q_{\epsilon}^{-1})^{1/2}\int_{F^*}\exp(\frac{1}{2}P(x)-\frac{1}{2}Q_{\epsilon}^{-1}(x))\dd x.$$

We apply now Formula \eqref{gauss} to the integral, noting that the quadratic form $Q_{\epsilon}^{-1}-P$ is still non-degenerate and has positive real part. Hence, we have 
$$Z_{\epsilon}^2=\frac{\det(Q_{\epsilon}^{-1})}{\det(Q_{\epsilon}^{-1}-P)}=\det(Q_0-Q_{\epsilon}P)^{-1}$$
Letting $\epsilon$ go to 0, we find that $Z(\Gamma,1)=\det(Q_0-QP)^{-1/2}$.

Suppose that $\hol$ is represented by a discrete connection on $\Gamma$ with values in SL$_2(\R$). By Formula  \eqref{eq:DefSpinNetwork}, we know how to adapt the construction: for each angle $\alpha$ connecting two half-edges $g$ and $h$, we need to replace $b_{g,h}:F_g\times F_h\to \C$ by $b_{g,h}(\hol_g^{-1}x_g,\hol_h^{-1}x_h)$. We denote by $Q_{\hol}$ the resulting quadratic form. By the assumption that $\hol$ lives in SL$_2(\R)$, $P$ takes again only imaginary values. 
Hence, the argument above repeats exactly and we obtain $Z(\Gamma,\hol)=\det(Q_0-Q_{\hol}P)^{-1/2}$. The general case, that is, for $\hol$ taking values in SL$_2(\C)$ follows by analytic continuation.

One can simplify this formula by remarking that the matrix of $P$ in the canonical basis satisfy $P^{-1}=-P$ and moreover, we have $\det(P)=1$. We obtain the formula of Theorem \ref{thm1}:

\begin{theorem*}[\ref{thm1}]
$$Z(\Gamma,\hol)=\det(P+Q_{\psi})^{-1/2}.$$
\end{theorem*}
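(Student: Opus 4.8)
The plan is to start from the formula the Gaussian-integral computation has already produced, namely $Z(\Gamma,\psi)=\det(Q_0-Q_\psi P)^{-1/2}$, in which the deformation $Q_0=\sum_h(z_h^2+w_h^2)$ is simply the identity matrix $\mathrm{Id}$ in the canonical basis, and to reduce it to $\det(P+Q_\psi)^{-1/2}$ by a purely linear-algebraic manipulation. Everything then hinges on two structural facts about the matrix of $P$: that $P^{-1}=-P$ (equivalently $P^2=-\mathrm{Id}$) and that $\det(P)=1$. This is where I would place the (modest) work; the passage to the final formula is a one-line determinant identity.

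To establish the two facts I would use that $P=-2\sum_{e:g\to h}b^{-1}_{g,h}$ is block-diagonal for the decomposition $F^*=\bigoplus_e (F^*_g\oplus F^*_h)$ indexed by edges. Indeed $b^{-1}_{g,h}$ couples only the two half-edges $g,h$ of a single edge $e$, and every half-edge lies on exactly one edge, so distinct edge-blocks share no coordinates. On one edge the $4\times 4$ block $B$, written in the ordered coordinates dual to $(z_g,w_g,z_h,w_h)$, has entries $\pm i$ realizing the pairing $(z_g\leftrightarrow w_h,\ w_g\leftrightarrow z_h)$; a direct computation gives $B^2=-\mathrm{Id}_4$ and $\det(B)=1$ (both invariant under the sign ambiguity $B\mapsto -B$, so the exact sign of the block is immaterial). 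Taking the direct sum over the $\#E=3N$ edges yields $P^2=-\mathrm{Id}$, i.e. $P^{-1}=-P$, and $\det(P)=1^{\#E}=1$. One checks that $P^{-1}=2\sum_{e:g\to h}b_{g,h}$ has the same block shape, consistent with $P^{-1}=-P$.

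With these in hand the simplification is immediate. Since $\det(P)=1\neq 0$, the matrix $P$ is invertible and I may factor it out of $P+Q_\psi$:
$$\det(P+Q_\psi)=\det\big((\mathrm{Id}+Q_\psi P^{-1})P\big)=\det(\mathrm{Id}+Q_\psi P^{-1})\,\det(P)=\det(\mathrm{Id}-Q_\psi P),$$
where the final equality substitutes $P^{-1}=-P$ and $\det(P)=1$. Because $Q_0=\mathrm{Id}$, the right-hand side is exactly $\det(Q_0-Q_\psi P)$, so taking inverse square roots turns the established formula into $Z(\Gamma,\psi)=\det(P+Q_\psi)^{-1/2}$, which is the claim.

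The only point that genuinely requires care is the meaning of the square root. As flagged in the introduction, $\det(P+Q_\psi)^{-1/2}$ is literally well-defined only for $\psi\in\mathrm{SL}_2(\R)$, where the Gaussian argument applies and the relevant quadratic form has positive real part, so the branch is pinned down. I would therefore perform the reduction above first over $\mathrm{SL}_2(\R)$ and then extend to all $\psi\in\mathrm{SL}_2(\C)$ by analytic continuation, each side being holomorphic in the entries of $\psi$ once the branch is fixed. I do not regard this final step as presenting a serious obstacle: the true content is the elementary verification that $P^{-1}=-P$ and $\det(P)=1$, together with keeping the square-root branch consistent, while the determinant identity itself is routine.
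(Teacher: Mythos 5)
Your proposal is correct and follows essentially the same route as the paper: the paper likewise derives $Z(\Gamma,\psi)=\det(Q_0-Q_\psi P)^{-1/2}$ by Gaussian integration and then obtains $\det(P+Q_\psi)^{-1/2}$ from exactly the two facts $P^{-1}=-P$ and $\det(P)=1$ (asserted there without proof; your per-edge block verification, with its observation that the sign ambiguity $B\mapsto -B$ is harmless, correctly fills in that detail), together with the same strategy of working first over $\mathrm{SL}_2(\R)$ and extending to $\mathrm{SL}_2(\C)$ by analytic continuation.
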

\subsection{A generalization of Westbury's theorem to the case of diagonal holonomies}
\label{sub:abeliancase}

Suppose that the holonomy $\psi$ can be represented by a connection with values in the subgroup D$\subset$SL$_2(\R)$ of diagonal matrices, we introduce a map $t:H\to \R^*$ such that for all $h$, one has $\psi_h=\begin{pmatrix}t_h&0\\0&t_h^{-1}\end{pmatrix}$ in the basis $(z_h,w_h)$. In this case, we can extend Westbury's theorem \ref{theo:westbury} as follows.

Let $C(\Gamma)$ be the set of all oriented curves immersed in $\Gamma$ which pass over an edge of $\Gamma$ either $0,1$ or $2$ times, in the latter case with opposite orientations. 
Given $\gamma\in C(\Gamma)$ we denote by ${\rm cr}(\gamma)$ the number of crossings modulo 2 of the corresponding immersion.

Let $X^\gamma=\prod_{\alpha\subset \gamma} X_{\alpha}$ (hence each angle may appear 0, 1 or 2 times) and $\tr(\gamma)=\prod_{h\subset \gamma} t_h^{\epsilon(\gamma,h)}$ ($\epsilon(\gamma,h)$ being $1$ if $\gamma$ crosses $h$  in the positive direction, $-1$ otherwise). 
The following result generalizes Westbury's result to the case of holonomies with values in diagonal matrices: 
\begin{theorem}\label{teo:abelianwestbury}
Let $\Gamma$ be a planar graph equipped with an abelian holonomy $\psi$. Then: $$Z(\Gamma, \psi)=\big(\sum_{\gamma\in C(\Gamma)} (-1)^{{\rm cr}(\gamma)} \tr(\gamma)X^{\gamma}(c)\big)^{-1}$$  
\end{theorem}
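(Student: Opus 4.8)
The plan is to evaluate the determinant $\det(P+Q_\psi)$ appearing in Theorem~\ref{thm1} combinatorially, in the special case where $\psi$ is diagonal. The starting point is the general expansion of a determinant $\det(M)=\sum_\sigma \operatorname{sgn}(\sigma)\prod_i M_{i,\sigma(i)}$ over permutations $\sigma$ of the index set, which here is $\bigoplus_{h\in H}F_h$ with each $F_h=\R^2$ carrying coordinates $(z_h,w_h)$; thus the index set has $2\cdot\#H=12N$ elements. The key structural fact is that both $P$ and $Q_\psi$ are very sparse skew matrices: by the remark in the excerpt, the nonzero entries of $P$ couple $z_g$ with $w_h$ (and $w_g$ with $z_h$) exactly when $g\cup h$ is an \emph{edge} of $\Gamma$, while $Q_\psi$ (being $2\sum_{\alpha:g\to h}X_\alpha\, b_{g,h}(\psi_g^{-1}\cdot,\psi_h^{-1}\cdot)$) couples coordinates of $g$ and $h$ exactly when $\{g,h\}$ forms an \emph{angle}. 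So every nonzero off-diagonal entry of $M=P+Q_\psi$ corresponds to either an edge or an angle of $\Gamma$, and carries a factor $X_\alpha t_h^{\pm1}$ in the angle case or a bare $\pm i$ in the edge case.

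The central idea is therefore to interpret each permutation $\sigma$ contributing a nonzero monomial to $\det(M)$ as a geometric object on $\Gamma$. First I would argue that, because $b_{g,h}$ pairs a $z$-coordinate at one half-edge with a $w$-coordinate at the other, a nonvanishing term forces $\sigma$ to be a product of transpositions/cycles that alternately use an ``edge'' pairing and an ``angle'' pairing as one moves through the half-edges. Tracing such an alternating sequence of edges-and-angles through $\Gamma$ produces exactly an immersed oriented curve that traverses each edge $0,1$, or $2$ times, i.e.\ an element $\gamma\in C(\Gamma)$; the constraint that an edge is used at most twice and with opposite orientations in that case comes from the $2\times2$ structure of each $F_h$ (each half-edge supplies only the two coordinates $z_h,w_h$). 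Conversely each $\gamma\in C(\Gamma)$ assembles into one or more admissible $\sigma$. This bijective dictionary between nonzero determinant terms and immersed curves is the heart of the argument.

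Once the dictionary is set up, I would match the three decorations. The monomial in the $X_\alpha$ is $\prod_{\alpha\subset\gamma}X_\alpha=X^\gamma$, since $\sigma$ uses precisely the angles traversed by $\gamma$. The holonomy factor: each time $\gamma$ crosses a half-edge $h$ the diagonal $\psi_h=\operatorname{diag}(t_h,t_h^{-1})$ contributes $t_h$ or $t_h^{-1}$ according to whether it acts on the $z$- or the $w$-coordinate, which is dictated by the orientation in which $\gamma$ crosses $h$; this reproduces $\tr(\gamma)=\prod_{h\subset\gamma}t_h^{\epsilon(\gamma,h)}$. The sign: combining $\operatorname{sgn}(\sigma)$ with the product of the $\pm i$'s from the edge-entries of $P$ and the signs inside $b_{g,h}$ should collapse to $(-1)^{{\rm cr}(\gamma)}$, the parity of self-crossings of the immersion. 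Collecting, $\det(P+Q_\psi)=\sum_{\gamma\in C(\Gamma)}(-1)^{{\rm cr}(\gamma)}\tr(\gamma)X^\gamma$, and then Theorem~\ref{thm1} together with $\det(P)=1$ gives $Z(\Gamma,\psi)=\det(P+Q_\psi)^{-1/2}$. The apparent discrepancy between the $-1/2$ power and the $-1$ power claimed here must be reconciled by observing that this determinant is itself a perfect square (it is $\big(\sum_\gamma(-1)^{{\rm cr}(\gamma)}\tr(\gamma)X^\gamma\big)^2$ when $C(\Gamma)$ is counted appropriately, or the sum as written is already the square root), which is where I would need to be careful.

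The main obstacle I expect is the sign bookkeeping: correctly showing that $\operatorname{sgn}(\sigma)$ times all the local $\pm i$ and $\pm1$ factors reduces cleanly to $(-1)^{{\rm cr}(\gamma)}$, and simultaneously resolving the square-versus-square-root issue so that the final exponent is $-1$ rather than $-1/2$. This requires a disciplined choice of orientation and ordering conventions (matching the left-to-right order of Figure~\ref{graphe}) and a careful analysis of how cycle structure of $\sigma$ relates to the number of curve components and their self-intersections. The underlying bijection and the $X^\gamma$ and $\tr(\gamma)$ factors are comparatively routine; it is the interplay of the permutation sign with the planar crossing parity, and the identification of the determinant as a square, that will demand the real work.
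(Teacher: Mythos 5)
Your overall strategy is the same as the paper's --- start from Theorem \ref{thm1} and evaluate $\det(P+Q_\psi)$ combinatorially --- but the two issues you defer (the $-1/2$ versus $-1$ exponent, and the sign bookkeeping) are precisely the substance of the proof, and your plan as written stalls on the first. The square root is not resolved by any multiplicity count over curves; it is resolved by a structural factorization you have not identified. Because $\psi$ is diagonal, every nonzero entry of $W=\tfrac{1}{i}(P+Q_\psi)$ couples a $z$-coordinate at one half-edge to a $w$-coordinate at another, so in the basis ordered by type $W$ is antidiagonal by blocks and $\det(P+Q_\psi)=\det(W^1)\det(W^2)$, where $W^1_{g,h}=W_{z_g,w_h}$ and $W^2_{g,h}=W_{w_g,z_h}$ are matrices indexed by the half-edges alone. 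One then checks $W^2=-\overline{W^1}=(W^1)^t$ (with the involution $\overline{t_h}=t_h^{-1}$), hence $\det(W^2)=\det(W^1)$ and $Z(\Gamma,\psi)=\det(W^1)^{-1}$. Your two hedged alternatives (``counted appropriately'' or ``the sum as written is already the square root'') are both wrong as diagnoses: the curve sum equals the determinant of a \emph{single} block, and in your full $12N\times 12N$ permutation expansion each nonzero term splits into its $z\to w$ and $w\to z$ parts, so terms biject with \emph{pairs} of configurations rather than with single curves --- which is exactly why the full determinant is the square.

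Your proposed dictionary is also inaccurate in a way that matters. A nonvanishing permutation term need \emph{not} alternate edge-pairings and angle-pairings: two consecutive angle-pairings at a half-edge (a ``cusp'') and doubly-used edges occur, and they must, since $C(\Gamma)$ contains curves traversing an edge twice. In the paper, after the reduction to $\det(W^1)$, the expansion is over configurations of curves \emph{and dimers} on the blown-up graph $\Gamma'$ (Corollary \ref{cor:determinants}), and the bijection with $C(\Gamma)$ is nontrivial: cusps must be joined by chains of internal dimers into doubled arcs, and the remaining internal dimers assemble into doubled curves (Propositions \ref{prop:abeliancase} and \ref{prop:countingcurves}). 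The sign identity $(-1)^{\#c+\#d}w(c)w(d)=(-1)^{{\rm cr}(\gamma)}\tr(\gamma)X^\gamma$ is then established term by term --- each dimer contributes $-1$, each added double arc flips a component count, and the final sign is controlled by a winding-number computation using ${\rm wind}(\gamma)\equiv {\rm cr}(\gamma)+1 \pmod 2$ for the planar presentation. So the gap is concrete: without the block factorization you cannot get the exponent $-1$, and without the curves-and-dimers analysis your claimed bijection between permutation terms and elements of $C(\Gamma)$ fails as stated; both repairs are exactly what the paper's appendix supplies.
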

Theorem \ref{teo:abelianwestbury} descends directly from Corollary \ref{cor:determinants} and Proposition \ref{prop:countingcurves} of Appendix \ref{sub:determinants} where we also show how to recover Westbury's theorem from it.

\subsection{The non-planar case}\label{sub:nonplanarcase}
Let now $\Gamma$ be a non-planar graph and let us fix a diagram of $\Gamma$ as in Figure \ref{graphe} containing crossings $x_1,\ldots x_k$.
By the supersymmetric rules, for each coloring $c$ of $\Gamma$, a crossing $x_i$ between edges $e_1$ and $e_2$ induces a factor $(-1)^{c_{e_1}c_{e_2}}$ which we denote by $s(x_i,c)$. The previous results allow us to compute the ``wrong" generating series for $\Gamma$ where the signs coming from crossings are not taken into account, that is we can compute: $W(\Gamma,\psi)=\sum_{c} \la\la\Gamma,c,\psi\ra\ra X^c\prod_{x_i} s(x_i,c)$.
To fix these signs, use the identity $$(-1)^{ab}=\frac{1}{2}(1+(-1)^a+(-1)^b-(-1)^{a+b}),\ \forall a,b\in \mathbb{Z}.$$ More explicitly, for each edge $e$ let $\alpha_e,\beta_e$ be the leftmost angles formed by $e$ and let ${\rm Op}_{e}:\mathbb{C}[[X]]\to\mathbb{C}[[X]]$ be the automorphism that changes the signs of $X_{\alpha_e}$ and $X_{\beta_e}$. Then, for a crossing $x$ between edges $e_1$ and $e_2$ define $S_x:\mathbb{C}[[X]]\to\mathbb{C}[[X]]$ as $S_{x}=\frac{1}{2}(\id+{\rm Op}_{e_1}+{\rm Op}_{e_2}-{\rm Op}_{e_1}\circ{\rm Op}_{e_2})$.  Then we recover $Z(\Gamma,\psi)=\sum_{c} \la\la\Gamma,c,\psi\ra\ra X^c$ as 
$$Z(\Gamma,\psi)=S_{x_1}\circ\cdots\circ S_{x_k}(W(\Gamma,\psi)).$$
In particular, when $\psi$ is trivial this recovers Garoufalidis and Van der Veen's extension to non-planar graphs of Westbury's theorem \cite{GaVV}.

\section{An integral formula for the square of spin network evaluations and its analysis}\label{integrals}
In this section we deduce an integral formula for the square of the spin-network evaluation. In the case of the tetrahedron, this formula was already known to Wigner (see \cite{wigner}). It was generalized by Barrett in \cite{barrett} to any graph. For questions of clarity and normalizations, we provide a proof of it, then we derive a formula for the corresponding generating series.
From now on we will no longer work in the super-symmetric category: being interested in squares of spin-network evaluations, sign matters are irrelevant.

\subsection{Derivation of the integral formula}
Let $(V,\omega,h)$ be a symplectic complex and hermitian vector space of rank 2. Both structures are supposed to be compatible in the sense that there is a hermitian basis $(e_1,e_2)$ of $V$ such that $\omega(e_1,e_2)=1$. We also suppose that the hermitian product is anti-linear on the left and denote by SU($V$) the symmetry group of the whole structure.  Naturally, $V^{\otimes n}$ has a hermitian structure as its subspace $V_n$. Denote also by $h$ the hermitian product induced on the spaces $V_n$ and their tensor products.

We would like to replace contractions with the forms $\omega_n$ by scalar products with some vectors. 
 The compatibility of the hermitian structure and the symplectic form give immediately the formula: $h(\omega^{-1},v\otimes w)=\omega(w,v)$ or $h(\varpi,v\otimes w)=\omega(v,w)$ where we set $\varpi=-\omega^{-1}$. Let $\varpi_n=(\Pi_n\otimes \Pi_n)\bigotimes_{i=1}^n\varpi_{i,2n+1-i}$, where $\Pi_n:V^{\otimes n}\to V^{\otimes n}$ is the projector on symmetric tensors in the non-supersymmetric sense. 
After some computation, we obtain for any $v,w\in V_n$, $h(\varpi_n,v\otimes w)=\omega_n(v,w)$.
 
Then we can compute the spin network evaluation using scalar product instead of contraction, more precisely we have up to sign:
\begin{equation}\label{spe-hermitian}
\la\Gamma,c\ra=h\left( \bigotimes\limits_{e\in E} \varpi_{c_e},\bigotimes\limits_{v\in V,v:(i,j,k)}\epsilon_{c_i,c_j,c_k}\right).
\end{equation}
Denote by $G$ the group SU($V$) and for any integer $n$, we denote by $\rho_n:G\to \text{U}(V_n)$ the induced representation of $G$.  We also write $\dd g$ for the Haar measure on $G$ satisfying $\int_G\dd g=1$.

Consider the element $$\boP=\int_G\rho_n(g)\otimes\rho_n(g)\dd g\in \text{End}(V_n\otimes V_n).$$ We check directly that $\boP$ satisfies $\boP\circ \boP=\boP$ and $\boP^*=\boP$. Moreover, the image of $\boP$ is the one dimensional space $(V_n\otimes V_n)^G$. Hence, $\boP$ is the orthogonal projector on $\C\varpi_n$. Using the formula $h(\varpi_n,v\otimes w)=\la\omega_n,v\otimes w\ra$ we have $h(\varpi_n,\varpi_n)=\la \omega_n,\varpi_n\ra$. But in the contraction of $\omega_n\otimes \varpi_n$, one can first contract the middle terms. One find $n$ times the evaluation of $\omega\otimes\varpi$ which is equal to $-\id_{V^*}$. Then, the middle contraction is $(-1)^n\id_{V_n^*}$. The final contraction computes the super-trace. Hence, one find $h(\varpi_n,\varpi_n)=n+1$.
Finally, we obtained that for any $v\in V_n\otimes V_n$, $\boP(v)=\frac{1}{n+1} h(\varpi^n,v)\varpi^n$.

In the same spirit, consider for any triple $(a,b,c)$ satisfying the triangular relations the element 
$$\boE=\int_G \rho_a(g)\otimes\rho_b(g)\otimes\rho_c(g)\dd g \in \text{End}(V_a\otimes V_b\otimes V_c).$$
The same reasoning as above shows that for any $v\in V_a\otimes V_b\otimes V_c$ one has $\boE(v)=\frac{h(\epsilon_{a,b,c},v)}{\la a,b,c\ra}v$ where we set $\la a,b,c\ra=h(\epsilon_{a,b,c},\epsilon_{a,b,c})$.
\begin{lemma}
$$\la a,b,c\ra=|\la\Theta,(a,b,c)\ra|=\frac{(\frac{a+b+c}{2}+1)!(\frac{a+b-c}{2})!(\frac{a-b+c}{2})!(\frac{-a+b+c}{2})!}{a!b!c!}$$
\end{lemma}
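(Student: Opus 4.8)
By definition $\la a,b,c\ra=h(\epsilon_{a,b,c},\epsilon_{a,b,c})$, so the statement has two parts: identifying this squared norm with $|\la\Theta,(a,b,c)\ra|$, and computing it in closed form. For the first part I would specialize Equation \eqref{spe-hermitian} to $\Gamma=\Theta$, which gives $\la\Theta,(a,b,c)\ra=h(\varpi_a\otimes\varpi_b\otimes\varpi_c,\epsilon_{a,b,c}\otimes\epsilon_{a,b,c})$; since $h(\varpi_n,v\otimes w)=\omega_n(v,w)$ this is exactly the double $\omega$-contraction of $\epsilon_{a,b,c}$ with itself. Introducing the antilinear map $J$ on $V$ with $\omega(v,w)=h(Jv,w)$ and its induced map on each $V_n$, the contraction becomes $h\big((J\otimes J\otimes J)\epsilon_{a,b,c},\epsilon_{a,b,c}\big)$. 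Now $J\otimes J\otimes J$ is antilinear and $\mathrm{SU}(V)$-equivariant, hence preserves the one-dimensional invariant line $\C\epsilon_{a,b,c}$, and since $a+b+c\in 2\N$ its square is a positive scalar; it therefore acts on that line by a phase, so $\la\Theta,(a,b,c)\ra$ is a unit-modulus multiple of $h(\epsilon_{a,b,c},\epsilon_{a,b,c})$. As the latter is a positive real and $\la\Theta,(a,b,c)\ra$ is rational, the phase is a sign and $\la a,b,c\ra=|\la\Theta,(a,b,c)\ra|$. (Alternatively, this equality falls out a posteriori once both sides are computed below.)

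To compute the norm I would pass to the apolar (differential) pairing. The symmetrization identification gives $h(z^iw^{n-i},z^jw^{n-j})=\delta_{ij}\frac{i!(n-i)!}{n!}$ on $V_n$, so that for $P,Q\in V_a\otimes V_b\otimes V_c$ one has $h(P,Q)=\frac{1}{a!\,b!\,c!}\,\overline{P}(\partial)Q$, where $\overline{P}(\partial)$ replaces each variable by the corresponding partial derivative. As $\epsilon_{a,b,c}$ has integer coefficients this yields
$$\la a,b,c\ra=\frac{1}{a!\,b!\,c!}\,\epsilon_{a,b,c}(\partial)\,\epsilon_{a,b,c},$$
reducing everything to the constant $\epsilon_{a,b,c}(\partial)\epsilon_{a,b,c}$, where $\epsilon_{a,b,c}=L_1^{p}L_2^{q}L_3^{r}$ with $L_1=z_2w_1-z_1w_2$, $L_2=z_3w_2-z_2w_3$, $L_3=z_3w_1-z_1w_3$ and $p=\frac{a+b-c}{2}$, $q=\frac{-a+b+c}{2}$, $r=\frac{a-b+c}{2}$.

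The main obstacle is evaluating this constant. Using $\partial_z^m z^n|_0=m!\,\delta_{mn}$, the number $\epsilon_{a,b,c}(\partial)\epsilon_{a,b,c}=\big(L_1(\partial)\big)^p\big(L_2(\partial)\big)^q\big(L_3(\partial)\big)^r\,L_1^pL_2^qL_3^r$ is a signed count of the contractions of the $2s$ derivative slots of the operator against the $2s$ variable slots of the polynomial, where $s=p+q+r=\frac{a+b+c}{2}$; equivalently it equals the Gaussian integral $\pi^{-6}\int_{\C^6}|L_1|^{2p}|L_2|^{2q}|L_3|^{2r}\,e^{-\sum_i(|z_i|^2+|w_i|^2)}$. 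Expanding by the multinomial theorem and bookkeeping the matchings, I expect to obtain $\epsilon_{a,b,c}(\partial)\epsilon_{a,b,c}=(s+1)!\,p!\,q!\,r!$, which after division by $a!b!c!$ gives the claimed formula; this combinatorial evaluation is the technical heart. A shortcut avoiding it is to invoke Westbury's theorem for $\Theta$: here $P_\Theta=1+u+v+w$ is the sum over the empty subgraph and the three bigons (each of $u,v,w$ being the product of the two equal angle variables of a bigon), and extracting the coefficient of $X^{c}=u^pv^qw^r$ from $Z(\Theta)=P_\Theta^{-2}$ gives $\la\la\Theta,(a,b,c)\ra\ra=(-1)^s(s+1)\frac{s!}{p!\,q!\,r!}$. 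Multiplying by $\frac{\prod_\alpha c_\alpha!}{\prod_e c_e!}=\frac{(p!\,q!\,r!)^2}{a!\,b!\,c!}$ recovers $\la\Theta,(a,b,c)\ra=(-1)^s\frac{(s+1)!\,p!\,q!\,r!}{a!\,b!\,c!}$, and together with the first part this completes the proof.
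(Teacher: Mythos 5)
Your proof is correct, and its decisive computation --- extracting the coefficient of $u^pv^qw^r$ from $Z(\Theta)=(1+u+v+w)^{-2}$, i.e. $(-1)^s(s+1)\frac{s!}{p!\,q!\,r!}$ with $s=p+q+r$, and then undoing the renormalization via the factor $\frac{\prod_\alpha c_\alpha!}{\prod_e c_e!}=\frac{(p!\,q!\,r!)^2}{a!\,b!\,c!}$ --- is exactly the route the paper indicates, since the paper's entire proof is a citation to Biedenharn--Louck together with the remark that the value can be ``computed from the generating series \ldots\ for the theta graph, taking care of the renormalization''. Where you go beyond the paper is in the first equality: the paper simply asserts $\la a,b,c\ra=|\la\Theta,(a,b,c)\ra|$ inside the lemma and delegates it to the reference, whereas you actually prove it, and your argument is sound: the antilinear map $J$ with $\omega(v,w)=h(Jv,w)$ commutes with the $\mathrm{SU}(V)$-action, so $J\otimes J\otimes J$ preserves the line $\C\,\epsilon_{a,b,c}$, and since its square is $(-1)^{a+b+c}=1$ it acts there by a unit-modulus scalar, whence the double $\omega$-contraction $\la\Theta,(a,b,c)\ra$ (defined only up to sign by Equation \eqref{spe-hermitian} anyway) has modulus $h(\epsilon_{a,b,c},\epsilon_{a,b,c})$ --- note that for the absolute-value statement you do not even need the rationality remark pinning the phase to $\pm 1$. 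One caveat: your middle paragraph, the direct evaluation $\epsilon_{a,b,c}(\partial)\,\epsilon_{a,b,c}=(s+1)!\,p!\,q!\,r!$ via the matching count or the Fock-space Gaussian integral, is only asserted (``I expect to obtain''), not carried out; as written this is harmless because it is superseded by the Westbury shortcut, which together with the $J$-argument already closes the proof, but if you wanted the apolarity route to stand on its own you would still owe that combinatorial evaluation.
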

\begin{proof} A proof of it may be found for instance in \cite{bl} or computed from the generating series given by Theorem \cite{We} for the theta graph, taking care of the renormalization from $\la\la\Theta,(a,b,c)\ra\ra$ to $\la \Theta,(a,b,c)\ra$.\end{proof}

In equation \eqref{spe-hermitian}, replace $\epsilon$ by $\boE$ and $\varpi$ by $\boP$ and consider the scalar product
$h\left(\bigotimes_{e\in E} \boP_e,\bigotimes_{v\in V} \boE_v\right)$:
using the fact that all matrices involved are self-adjoint, one sees that it equals $\tr( \bigotimes_{e\in E} \boP_e\circ\bigotimes_{v\in V} \boE_v)$. 
For any edge $e$ colored by $c_e$, let $\la e\ra=c_e+1$ and $\boP_e$ be the projector acting on $V_{c_e}\otimes V_{c_e}$. Similarly, for any vertex $v$ whose incoming edges are $i,j,k$, we set $\la v\ra=\la c_i,c_j,c_k\ra$ and denote by $\boE_v$ the projector acting on $V_{c_i}\otimes V_{c_j}\otimes V_{c_k}$.
Let $[\Gamma,c]=\prod_{e}\la e\ra \tr( \bigotimes_{e\in E} \boP_e\circ\bigotimes_{v\in V} \boE_v)=\prod_{e}\la e\ra h\left(\bigotimes_{e\in E} \boP_e,\bigotimes_{v\in V} \boE_v\right)$.
  
 The first term in the scalar product defining $[\Gamma,c]$ is the orthogonal projector on $\bigotimes_{e}\varpi^{c_e}$ and the second term is the orthogonal projector on $\bigotimes_{v}\epsilon_v$ hence, we have:
 
\begin{equation}\label{relation}
[\Gamma,c]=\frac{|\la\Gamma,c\ra |^2}{\prod_v\la v\ra}.
\end{equation}
 Here we use the fact that $h$ induces a hermitian form on $\rm{End}(W)=W\otimes W^*
 $ and that the scalar product of projectors $P_u$ and $P_v$ on unit vectors $u$ and $v$ is $|\la u,v\ra|^2$.
 On the other hand, we can invert hermitian product and integration finding 
$$
[\Gamma,c]=\prod_e\la e\ra\int_{G^{V\amalg E}}\!h(\bigotimes_{e}\rho_{c_e}(g_e)\otimes\rho_{c_e}(g_e),\!\bigotimes_{v: (i,j,k)} \rho_{c_i}(g_v)\otimes\rho_{c_j}(g_v)\otimes \rho_{c_k}(g_v))\dd g$$
$$[\Gamma,c]=\prod_e\la e\ra\int_{G^{V\amalg E}}\prod_{h=(v,e)}\tr_{c_e}(g_v g_e)\dd g=\int_{G^V}\prod_{e:(v,w)}\tr_{c_e}(g_v g_w^{-1})\dd g.$$ 
 Remark that the last equation holds even if $c$ is a non admissible coloring on $\Gamma$ (in which case both sides of the equations are $0$).
 In order to prove the last equality, we used the following lemma:
 \begin{lemma}\label{prodtrace}
 For any integer $c$ and all elements $A,B\in G$ one has $$\int_G \tr_c(Ag)\tr_c(Bg)\dd g=\frac{1}{c+1}\tr_c(AB^{-1}).$$
 \end{lemma}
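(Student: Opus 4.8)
The plan is to prove the orthogonality-type relation
$$\int_G \tr_c(Ag)\tr_c(Bg)\,\dd g = \frac{1}{c+1}\tr_c(AB^{-1})$$
by reducing it to the Schur orthogonality relations for the irreducible representation $\rho_c$. First I would rewrite each trace as $\tr_c(Ag)=\sum_i \la \rho_c(Ag)e_i,e_i\ra$ in an orthonormal basis $(e_i)$ of $V_c$. The key point is that $G=\mathrm{SU}(V)\cong\mathrm{SU}_2$ is its own complex conjugate representation in a controlled way: since $V_c$ is self-dual (its character is real), there is an antiunitary structure relating $\rho_c(g)$ and its conjugate. Concretely, the matrix coefficients satisfy $\overline{\rho_c(g)_{ij}}=\rho_c(g^{-1})_{\sigma(i)\sigma(j)}$ up to signs coming from the symplectic self-duality, and this lets me turn the product $\tr_c(Ag)\tr_c(Bg)$ — which is bilinear, not sesquilinear, in the matrix coefficients of $g$ — into the sesquilinear pairing that Schur orthogonality controls.

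The main computation would then proceed as follows. Expanding, the integrand is $\sum_{i,j}\rho_c(Ag)_{ii}\,\rho_c(Bg)_{jj} = \sum_{i,j,k,l}\rho_c(A)_{ik}\rho_c(B)_{jl}\,\rho_c(g)_{ki}\rho_c(g)_{lj}$. The integral $\int_G \rho_c(g)_{ki}\rho_c(g)_{lj}\,\dd g$ is a $G$-invariant element of $V_c\otimes V_c$ (in the indices $k,l$) paired against one in the dual (indices $i,j$); since $(V_c\otimes V_c)^G$ is one-dimensional, spanned by $\varpi_c$, this integral must be proportional to the corresponding invariant tensor, i.e. to $\omega$-type pairings $\varpi_{kl}\overline{\varpi_{ij}}$ (equivalently, to the matrix of $\boP$ already computed earlier in the excerpt). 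I would fix the proportionality constant using $h(\varpi_c,\varpi_c)=c+1$, which was established above, producing the $\frac{1}{c+1}$ factor. Substituting back and contracting the $\varpi$-tensors against $\rho_c(A)$ and $\rho_c(B)$ converts the symplectic pairing into the inverse $B^{-1}$: schematically $\varpi$ implements $\omega(v,w)=h(\varpi,v\otimes w)$, and the defining relation $\rho_c(g)^{t}\,\varpi_c=\varpi_c\,\rho_c(g)^{-1}$ (invariance of $\varpi_c$) is exactly what produces $\tr_c(AB^{-1})$.

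An efficient alternative, which I would likely prefer for cleanliness, is to invoke the operator $\boP=\int_G\rho_c(g)\otimes\rho_c(g)\,\dd g$ directly, since the excerpt already proves $\boP(v)=\frac{1}{c+1}h(\varpi_c,v)\varpi_c$. The quantity $\int_G\tr_c(Ag)\tr_c(Bg)\,\dd g$ is precisely $\tr\big((A\otimes B)\circ\boP\big)$ read in the appropriate identification of $V_c\otimes V_c$ with endomorphisms, so plugging in the explicit form of $\boP$ and using $h(\varpi_c,\varpi_c)=c+1$ together with the intertwining property of $\varpi_c$ yields the result in a couple of lines.

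The hard part will be bookkeeping the self-duality carefully: because $\tr_c(Ag)\tr_c(Bg)$ pairs $g$ with $g$ (rather than $g$ with $\bar g$), one must correctly invoke the symplectic isomorphism $V_c\cong V_c^*$ furnished by $\omega_c$ (equivalently by $\varpi_c$) to land in the Schur/$\boP$ framework, and to verify that the resulting pairing gives $AB^{-1}$ rather than $AB$ or $A^{-1}B$. The sign and normalization conventions relating $\varpi_c=-\omega_c^{-1}$ to $h$ must be tracked so that the constant comes out to exactly $\frac{1}{c+1}$; once those identifications are in place the computation is routine.
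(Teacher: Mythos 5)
Your proposal is correct, and your preferred variant takes a genuinely different route from the paper's. The paper argues entirely at the level of $V_c$: by left invariance of the Haar measure it substitutes $g\mapsto B^{-1}g$ to reduce to $\int_G\tr_c(AB^{-1}g)\tr_c(g)\dd g=\tr\rho_c\big(AB^{-1}\,U\big)$ with $U=\int_G\rho_c(g)\tr_c(g)\dd g$, notes that $U$ commutes with $G$ and is therefore scalar by Schur's lemma, and fixes the scalar via $\tr(U)=\int_G\tr_c(g)^2\dd g=1$ (character orthonormality, using silently that $\mathrm{SU}_2$ characters are real). You instead work in $V_c\otimes V_c$: you identify the integral with $\tr\big((\rho_c(A)\otimes\rho_c(B))\circ\boP\big)$ --- note that no identification of $V_c\otimes V_c$ with endomorphisms is actually needed here, only multiplicativity of the trace under tensor products --- and then use that $\boP$ is the rank-one orthogonal projector onto $\C\varpi_c$ with $h(\varpi_c,\varpi_c)=c+1$, together with the invariance $(\rho_c(g)\otimes\rho_c(g))\varpi_c=\varpi_c$, which gives $(\rho_c(A)\otimes\rho_c(B))\varpi_c=(\rho_c(AB^{-1})\otimes\id)\varpi_c$, and the identity $h\big(\varpi_c,(\rho_c(C)\otimes\id)\varpi_c\big)=\tr_c(C)$ (immediate for $c=1$ and checkable on diagonal $C$ in general). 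Your route recycles the $\boP$-machinery the section has just established, so it is natural in context and makes the self-duality mechanism fully explicit; the price is exactly the bookkeeping you flag, since the pairing is bilinear rather than sesquilinear in $g$ and Schur orthogonality enters only through the symplectic isomorphism $V_c\cong V_c^*$. The paper's substitution trick sidesteps all of that duality bookkeeping and is shorter, though it too relies on reality of the character at the normalization step. Both arguments are complete once the routine verifications you indicate are carried out.
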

 \begin{proof}
 We have 
 \begin{eqnarray*}
 \int_G \tr_c(Ag)\tr_c(Bg)\dd g&=&\int_G\tr_c(AB^{-1}g)\tr_c(g)\dd g\\
 &=&\tr\rho_c\left(AB^{-1}\int_G\rho_c(g)\tr_c(g)\dd g\right).
 \end{eqnarray*}
 Writing $U=\int_G\rho_c(g)\tr_c(g)\dd g$ one obtains that $U$ commutes with $G$ hence $U$ is proportional to the identity.
 Moreover, $\tr(U)=\int_G\tr_c(g)^2\dd g=1$ hence $U=\frac{1}{c+1}\id_{V_c}$ which proves the lemma.
 \end{proof}
 
 As a summary we obtained the following formula, noting that $\la \Gamma,c\ra$ is real:
 $$\la\Gamma,c\ra^2=\prod_v\la v\ra \int_{G^V}\prod_{e:(v,w)}\tr_{c_e}(g_v g_w^{-1})\dd g=\prod_v\la v\ra [\Gamma,c]$$
 Notice that we need to orient arbitrarily the edges of $\Gamma$ to write this formula: this detail will be important in the next section.
 
Let $R=\C[[Y_e,e\in E]]$ be the ring of formal variables associated to the edges. To any coloring $c$, we denote by $Y^c$ the monomial $\prod_{e\in E}Y_e^{c_e}$. 
 For any $A$ in $G$, the following identity holds: 
 $$\sum_{n\in \N} \tr_n(A)Y^n=\frac{1}{\det(\id-YA)}.$$
 
Consider then $W(\Gamma)=\sum_{c} [\Gamma,c] Y^c$ where the sum is taken also over non-admissible colorings by setting $[\Gamma,c]=0$ in that case.
We deduce the following formula:
\begin{equation}\label{serie-integrale}
W(\Gamma)= \sum_{c \textrm{ admissible}} \frac{\la\Gamma,c\ra^2 Y^c} {\prod_{v} \la v\ra}
 =\int_{G^V}\frac{\dd g}{\prod\limits_{e:(v,w)}\det(\id -Y_e g_v g_w^{-1})}
 \end{equation}

\begin{example}
Consider the case of the graph $\smalltheta{ .5}{0.15}$. Then for any coloring $c$ given by three colors $a,b,c$, one has by construction $\la\Theta,a,b,c\ra=\la a,b,c\ra$ and hence the generating series \eqref{serie-integrale} is simply 
$$\sum_{c \textrm{, admissible}}Y^c=\frac{1}{(1-Y_1Y_2)(1-Y_2Y_3)(1-Y_1Y_3)}=\int_G\frac{\dd g}{\prod_{i=1}^3\det(\id-Y_ig)}.$$
 On the other hand, the coefficient of $Y_1^aY_2^bY_3^c$ in the above integral is $\int_G \tr_a(g)\tr_b(g)\tr_c(g)\dd g$. Orthonormality of the characters in $L^2(G,\dd g)$ and the Clebsch-Gordan rules imply that this integral is 1 if $a,b,c$ are admissible and 0 if not.
\end{example}

We can generalize the formula to the holonomy case: fix a holonomy $\psi:H\to G$. Then, one has to replace at each vertex $v$ the tensor $\epsilon_{c_i,c_j,c_k}$ with $\textrm{Hol}_{\psi}(\epsilon_{c_i,c_j,c_k})=\rho_{c_i}(\psi_{i,v})\otimes\rho_{c_j}(\psi_{j,v})\otimes\rho_{c_k}(\psi_{k,v}) \epsilon_{c_i,c_j,c_k}$. Then in the computation above, we need to replace the projector $\boE$ with the projector on $\textrm{Hol}_{\psi}(\epsilon_{c_i,c_j,c_k})$ that is $\textrm{Hol}_{\psi}\boE\textrm{Hol}_{\psi}^{-1}$. 
Therefore we have:
\begin{eqnarray*}
[\Gamma,c,\psi]&=&\prod_e\la e\ra\int_{G^{V\amalg E}}\prod_{h=(v,e)}\tr_{c_e}(g_e \psi_h g_v \psi_h^{-1})\dd g \\
&=&\int_{G^V}\prod_{e:(v,w)}\tr_{c_e}(\psi_{e,v}g_v\psi_{e,v}^{-1}\psi_{e,w}g_w^{-1}\psi_{e,w}^{-1})\dd g
\end{eqnarray*}
Since as before the above equalities still hold  if $c$ is not admissible (all terms are $0$), defining $W(\Gamma,\psi)$ as in Equation \eqref{serie-integrale}, the following holds:
\begin{theorem}
Let $(\Gamma,\psi)$ be a trivalent graph equipped with a connection with values in SL$_2(\C)$, and let $R_Y=\C[[Y_e,e\in E]]$. Then the following equality holds in $R_Y$:
\begin{eqnarray*}
W(\Gamma,\psi)&=&\int_{G^{V}}\frac{\dd g}{\prod\limits_{e:(v,w)}\det(\id-\psi_{e,v}g_v\psi_{e,v}^{-1}\psi_{e,w}g_w^{-1}\psi_{e,w}^{-1}Y_e)}
\end{eqnarray*}

\end{theorem}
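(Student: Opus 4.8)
The plan is to assemble the generating series directly from the per-coloring integral formula established just above the statement, namely
$$[\Gamma,c,\psi]=\int_{G^V}\prod_{e:(v,w)}\tr_{c_e}(\psi_{e,v}g_v\psi_{e,v}^{-1}\psi_{e,w}g_w^{-1}\psi_{e,w}^{-1})\,\dd g,$$
which is valid for every coloring $c:E\to\N$ (both sides vanishing when $c$ is not admissible). Writing $A_e=A_e(g)=\psi_{e,v}g_v\psi_{e,v}^{-1}\psi_{e,w}g_w^{-1}\psi_{e,w}^{-1}\in\mathrm{SL}_2(\C)$ for the oriented edge $e:(v,w)$, I would first form $W(\Gamma,\psi)=\sum_c[\Gamma,c,\psi]\,Y^c$ and substitute this integral expression, so that the coefficient of each monomial $Y^c=\prod_e Y_e^{c_e}$ is precisely the single integral above.

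The second step is to sum over colorings inside the integral and factorize over edges. Since the integrand $\prod_e\tr_{c_e}(A_e)$ is a product indexed by $E$ in which the color $c_e$ of each edge appears independently, distributing the sum over the tuple $c=(c_e)_{e\in E}$ across the product gives
$$\sum_{c}\prod_{e}\tr_{c_e}(A_e)\,Y_e^{c_e}=\prod_{e}\Big(\sum_{n\in\N}\tr_n(A_e)\,Y_e^{n}\Big).$$
I would then apply the identity $\sum_{n\in\N}\tr_n(A)Y^n=\det(\id-YA)^{-1}$ recalled earlier, with $A=A_e$ and $Y=Y_e$, turning each factor into $\det(\id-Y_e A_e)^{-1}$. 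This produces exactly the claimed integrand, and integrating over $G^V$ yields the asserted formula.

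The one point requiring care --- and the main (if modest) obstacle --- is the interchange of the infinite sum over colorings with the integration over the compact group $G^V$, which I would handle at the level of formal power series rather than by analytic convergence, since $\psi$ ranges in $\mathrm{SL}_2(\C)$ and the eigenvalues of $A_e$ need not lie on the unit circle. Concretely, $\det(\id-Y_e A_e)=1+O(Y_e)$ is invertible in $R_Y$, its inverse expands as a power series whose coefficients are polynomial (hence bounded, continuous) functions of $g\in G^V$, and the generating identity $\det(\id-Y_eA_e)^{-1}=\sum_n\tr_n(A_e)Y_e^n$ holds coefficientwise. Comparing the coefficient of a fixed monomial $Y^c$ on both sides then reduces to the finite statement $\int_{G^V}\prod_e\tr_{c_e}(A_e)\,\dd g=[\Gamma,c,\psi]$, which is exactly the already-derived integral formula; this legitimizes the termwise integration and completes the argument.
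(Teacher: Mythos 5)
Your proposal is correct and follows essentially the same route as the paper: establish the per-coloring identity $[\Gamma,c,\psi]=\int_{G^V}\prod_{e:(v,w)}\tr_{c_e}(\psi_{e,v}g_v\psi_{e,v}^{-1}\psi_{e,w}g_w^{-1}\psi_{e,w}^{-1})\,\dd g$ (valid with both sides zero for non-admissible $c$), sum over all colorings, factorize over edges, and apply the character generating identity $\sum_n\tr_n(A)Y^n=\det(\id-YA)^{-1}$. Your explicit formal-power-series justification of the coefficientwise interchange of summation and integration---needed since $\psi\in\mathrm{SL}_2(\C)$ makes the matrices $A_e$ non-unitary---is a detail the paper leaves implicit (it simply defines $W$ as in its earlier equation and asserts the result), but it is the right way to make the statement in $R_Y$ rigorous.
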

\subsection{Comparison between generating series}
Let $\Gamma$ be a trivalent graph.
We denote by $\ba{R}_X$ be the sub-algebra of $R_X$ linearly generated by $X^c$ for admissible colorings $c$ and by $\ba{R}_Y$ the sub-algebra of $R_Y$ linearly generated by $Y^c$ for admissible colorings $c$.
We define a morphism (anti-linear on the left) $(\cdot,\cdot):\ba{R}_X\times \ba{R}_X\to \ba{R}_Y$ in the following way: if $c$ and $c'$ are distinct then $(X^c,X^{c'})=0$. If not, we set 
$$(X^c,X^c)=\frac{Y^c}{\prod\limits_{v:(e_1,e_2,e_3)} \la\la \Theta,c_{e_1},c_{e_2},c_{e_3}\ra\ra}$$
The equation \eqref{relation} relating $\la \Gamma,c\ra$ and $[\Gamma,c]$ generalizes directly to the case with holonomies. Summing over all colorings we find:

\begin{equation}\label{bizarre}
(Z(\Gamma,\psi),Z(\Gamma,\psi))=W(\Gamma,\psi).
\end{equation}
It would be interesting to interpret this formula at the level of generating series for which we should interpret the product $(\cdot,\cdot)$ as an hermitian product over $\ba{R}_X$ given by some integral formula. We do not address this question here.
%


%

We end this section with some remarks on the orthogonality of spin-network as functions of the holonomy.
Let $\boM$ be the moduli space of holonomies on $\Gamma$ with values in $G=\textrm{SU}_2$. It is equal to the quotient of $G^H/G^{V\amalg E}$: the Haar measure on $G^H$ produces a measure $\mu$ on $\boM$.

The map from $\boM$ to $\C$ sending $\psi$ to $\la \Gamma,c,\psi\ra$ is a polynomial function and it is well-known that the family $\la \Gamma,c,\cdot \ra$ form an orthogonal basis of $L^2(\boM,\mu)$.
The equation \eqref{relation} relating $\la \Gamma,c\ra$ and $[\Gamma,c]$ generalizes directly to the case with holonomies. We can use it to compute the normalization of the basis:

$$\int_{\boM}|\la \Gamma,c,\psi\ra|^2\dd\mu(\psi)=\prod_v\la v\ra\prod_e\la e\ra\int_{\boM}\prod_{h=(v,e)}\tr_{c_e}(g_e\psi_h g_v\psi_h^{-1})\dd\mu(h)
$$
We have $\int_{G}\tr_c(AgBg^{-1})\dd g= \frac{1}{c+1}\tr_c(A)\tr_c(B)$ by an argument similar to Lemma \ref{prodtrace}. If we apply this formula to each term of the product, the term $\prod_e \la e\ra$ gets inverted. By the orthonormality of characters and Clebsh-Gordan rule, we find $\int_{\boM}|\la \Gamma,c,\psi\ra|^2\dd\mu(\psi)=\prod_v\la v\ra \prod_e\la e\ra ^{-1}$.

\section{Asymptotics of spin-network evaluations}
In this section, we compute the first order of the asymptotic behavior of $[\Gamma,kc,1]$ (the square of the spin-network evaluation divided by $\prod_{v}\la v\ra$), for general $\Gamma$. In Subsection \ref{Subsection:critical}, we transform the integral formula to adapt it to stationary phase approximation and describe the critical set of the integrand.
Then we discuss some sufficient non-degeneracy hypotheses (Subsection \ref{hypo}). The computation of the Hessian occupies Subsection \ref{Subsection:hessian}, in Subsection \ref{Subsection:stationaryphase} we apply the stationary phase method whereas Subsection \ref{Subsection:proof} ends the proof.

\subsection{Description of the critical set}\label{Subsection:critical}
From now on, to simplify the notation, we will write for any $v,w\in V: h(v,w)=\la v,w\ra$. Denote by $S^3$ the unit sphere of $V$ and $\dd v$ the Haar measure on it (i.e. such that $\int_{S^3}1dv=1$).
\begin{lemma}\label{coh}
For any $g\in G$, $\tr_n(g)=(n+1)\int_{S^3} \la v,\rho(g)v\ra^n\dd v$.
\end{lemma}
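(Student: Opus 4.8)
The plan is to realize the character $\tr_n$ through the coherent states $v^{\otimes n}$ and to average the associated rank-one projectors using Schur's lemma; this matches the acknowledgement crediting Fr\'ed\'eric Faure's remarks on coherent states. First I would record the elementary facts about these states. Since $V_n$ is the space of symmetric tensors, for every $v\in S^3$ the tensor $v^{\otimes n}$ lies in $V_n$, and it is a \emph{unit} vector because the induced hermitian product multiplies factorwise, so $h(v^{\otimes n},v^{\otimes n})=h(v,v)^n=1$. The same multiplicativity gives $h(v^{\otimes n},w^{\otimes n})=h(v,w)^n$, and since $\rho_n(g)v^{\otimes n}=(\rho(g)v)^{\otimes n}$ we obtain the key identity $\la v,\rho(g)v\ra^n=h(v^{\otimes n},\rho_n(g)v^{\otimes n})$, which already rewrites the right-hand integrand in terms of $\rho_n$.

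Next I would introduce the operator $T=\int_{S^3}P_v\,\dd v\in \mathrm{End}(V_n)$, where $P_v$ denotes the orthogonal projector onto $\C v^{\otimes n}$ (well defined since $v^{\otimes n}$ is a unit vector). Because $\rho_n$ is unitary and $\rho(g)v\in S^3$, one checks $\rho_n(g)P_v\rho_n(g)^{-1}=P_{\rho(g)v}$; combining this with the $\mathrm{SU}(V)$-invariance of the measure $\dd v$ on $S^3$ (on which $\mathrm{SU}(V)$ acts transitively, preserving the normalized Haar measure) yields $\rho_n(g)T\rho_n(g)^{-1}=T$ for all $g\in G$. Since $V_n$ is irreducible, Schur's lemma forces $T=\lambda\,\id_{V_n}$. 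Taking traces, $\lambda(n+1)=\tr(T)=\int_{S^3}h(v^{\otimes n},v^{\otimes n})\,\dd v=\int_{S^3}1\,\dd v=1$, so that $T=\frac{1}{n+1}\id_{V_n}$.

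Finally I would assemble the pieces: using $\tr(\rho_n(g)P_v)=h(v^{\otimes n},\rho_n(g)v^{\otimes n})$ and the identity of the first paragraph, $\int_{S^3}\la v,\rho(g)v\ra^n\,\dd v=\int_{S^3}\tr(\rho_n(g)P_v)\,\dd v=\tr\big(\rho_n(g)T\big)=\frac{1}{n+1}\tr_n(g)$, and multiplying by $n+1$ gives the stated formula. The argument is essentially routine once the coherent-state setup is in place; the only points needing care are the hermitian conventions (antilinear on the left, so that $\tr(\rho_n(g)P_v)=h(v^{\otimes n},\rho_n(g)v^{\otimes n})$ with the correct slot) and the genuine $\mathrm{SU}(V)$-invariance of $\dd v$, which is what licenses the Schur reduction. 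I do not expect a serious obstacle.
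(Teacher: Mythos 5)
Your proof is correct and follows essentially the same route as the paper: the paper likewise defines the averaged operator $U=\int_{S^3}v^n\la v^n,\cdot\ra\,\dd v$ (your $T=\int_{S^3}P_v\,\dd v$ in projector notation), invokes Schur's lemma to get $U=\frac{1}{n+1}\id_{V_n}$ from the trace computation, and concludes via $\tr_n(g)=(n+1)\tr(\rho_n(g)U)$ together with $\la v^n,w^n\ra=\la v,w\ra^n$. Your explicit verification of $G$-equivariance via the invariance of $\dd v$ simply spells out what the paper asserts as clear.
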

\begin{proof}
Recall that $\bigoplus_n V_n$ is the algebra of polynomials on $V^*$  and that $G$ acts on it by algebra morphisms. For any $v\in S^3$ , we have $\la v^n,w^n\ra=\la v,w\ra^n$. Consider the endomorphism $U$ of $V_n$ given by $U=\int_{S^3}v^n \la v^n,\cdot\ra\dd v$. This endomorphism clearly commutes with the action of $G$, hence it is proportional to $\id_{V_n}$. As $\tr(v^n \la v^n,\cdot\ra)=1$, we find $\tr(U)=1$ hence $U=\frac{1}{n+1}\id_{V_n}$. Now, given any $g\in G$, we have $\tr_n(g)=\tr \rho_n(g)=(n+1)\tr \rho_n(g)U=(n+1)\int_{S^3}\la v^n,\rho(g)v^n\ra\dd v=(n+1)\int_{S^3}\la v,\rho(g)v\ra^n\dd v$.
 \end{proof}

We deduce the following formula:
$$[\Gamma,c]=\prod_{e}\la e\ra\int_{G^V}\int_{(S^3)^E}\prod_{e:v\to w}\la g_v u_e,g_w u_e\ra^{c_e}\dd g \dd u.$$
 The notation $e:v\to w$ means that $e$ is an oriented edge joining $v$ to $w$, $u_e$ is an element of $S^3$ corresponding to the edge $e$.
 
 In order to analyze this integral, we restrict the domain of the integral such that the integrand does not vanish and such that symmetries behave nicely: we will see that this restriction does not affect the asymptotic analysis. More precisely,  let $X$ be the following subset of $G^V\times (S^3)^E$ and 
 $\pi:S^3\to S^2$ be the Hopf fibration. The sphere $S^2$ is either identified to the projective space $\mathbb{P}(V)$ or to the unit sphere of the Lie algebra $\bog$ of $G$ (equipped with the scalar product $|\xi|^2=-\frac{1}{2}\tr(\xi^2)$ ).
 \begin{eqnarray*}
 X&=&\{(g_v,u_e)\in G^V\times (S^3)^E\textrm{ such that }\forall e:v\to w,\la g_v u_e,g_w u_e\ra\ne 0\\
 &&\textrm{ and the family } (\pi(u_e))_{e\in E}\textrm{ has rank at least 2 in }\bog\}
\end{eqnarray*}

Let $F:X\to \C /2i\pi \Z$ be the map $F(g,u)=\sum\limits_{e:v\to w} c_e \ln \la g_v u_e,g_w u_e\ra$ and write $P_e=\pi(u_e)\in \bog$ if the orientation of $e$ coincides with the chosen one and $P_e=- \pi(u_e)$ if not.
 We have the following proposition:
 \begin{proposition}\label{critical}
The critical points of $F$ are the elements $(g,u)\in X$ satisfying 
\begin{enumerate}
\item
For all edges $e:v\to w$,  $g_v u_e=\tau_e g_w u_e$ for some $\tau_e\in S^1$.
\item
For all vertices $v$ with edges $e_1,e_2,e_3$ going out of $v$, one has $c_{e_1}P_{e_1}+c_{e_2}P_{e_2}+c_{e_3}P_{e_3}=0$.
\end{enumerate}
\end{proposition}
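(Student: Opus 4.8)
The plan is to exploit that $F(g,u)=\sum_{e:v\to w}c_e\ln\la g_v u_e,g_w u_e\ra$ depends on each sphere variable $u_e$ only through a single summand, and on each group variable $g_v$ only through the summands of the edges incident to $v$. Hence $dF$ (a $\C$-valued $1$-form on the real manifold $X$) splits as a sum of independent partial differentials, and a point of $X$ is critical iff all of them vanish. I would first read off condition (1) from the $u$-directions --- these give an \emph{equivalence}, not merely a necessary condition --- and then, substituting (1), simplify the $g_v$-directions to obtain condition (2). The backward implication follows from the same computations run in reverse.

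For the $u_e$-variation, fix $e:v\to w$ and use unitarity of $g_v$ to rewrite the $e$-summand as $c_e\ln\la u_e,A u_e\ra$ with $A=g_v^{-1}g_w\in G$; on $X$ the argument is nonzero, so $\ln$ is smooth. Decompose $T_{u_e}S^3=\R\,(iu_e)\oplus u_e^\perp$, where $u_e^\perp$ is the Hermitian orthogonal complement. The phase direction $iu_e$ contributes zero automatically, reflecting that $\la u,Au\ra$ is invariant under $u\mapsto e^{i\theta}u$ and so descends to $\P(V)$. Writing a variation in $u_e^\perp$ as $\lambda u_e'$ (with $u_e'$ a unit generator of $u_e^\perp$ and $\lambda\in\C$), the derivative is $\bar\lambda\la u_e',Au_e\ra+\lambda\la u_e,Au_e'\ra$; as a real-linear function of $\lambda$ this vanishes identically iff both coefficients vanish, i.e. iff $Au_e\in\C u_e$. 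Since $A$ is unitary this forces $Au_e=\bar\tau_e u_e$ with $\tau_e\in S^1$, equivalently $g_v u_e=\tau_e g_w u_e$, which is exactly (1) (assuming $c_e\neq 0$; for $c_e=0$ the summand is absent and $u_e$ is unconstrained).

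For the $g_v$-variation I would differentiate along the right-invariant fields $g_v\mapsto g_v e^{t\xi}$, $\xi\in\bog$. Using (1), each incident summand collapses: for an edge $e$ outgoing at $v$ the derivative becomes $c_e\la\xi u_e,u_e\ra$, and for an edge incoming at $v$ it becomes $-c_e\la\xi u_e,u_e\ra$, the sign recording the orientation. The key tool is the moment-map identity $\la\xi u,u\ra=-i\,\la\xi,\pi(u)\ra_{\bog}$ for $u\in S^3$, where $\pi(u)$ is viewed as a unit vector of $\bog$ and $\la\cdot,\cdot\ra_{\bog}$ is the invariant form $-\tfrac12\tr(\cdot\,\cdot)$; I would prove it by checking $u=e_1$ directly and extending by $\mathrm{SU}(2)$-equivariance of the Hopf map, $\pi(gu)=\mathrm{Ad}_g\pi(u)$. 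Recalling that $P_e=\pi(u_e)$ when the outward orientation at $v$ agrees with the chosen one and $P_e=-\pi(u_e)$ otherwise, the two sign cases merge and the total $g_v$-derivative equals $-i\,\la\xi,\sum_{e}c_e P_e\ra_{\bog}$, the sum running over the three edges oriented out of $v$. Non-degeneracy of the form on $\bog$ then makes this vanish for all $\xi$ iff $\sum_e c_e P_e=0$, which is (2).

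The genuinely delicate part is the bookkeeping rather than any hard analysis: keeping track of the anti-linearity of $\la\cdot,\cdot\ra$ on the left, matching the orientation signs to the definition of $P_e$, and fixing the normalization in the moment-map identity. The rank-$\ge 2$ clause in the definition of $X$ plays no role here --- it is needed only later for the non-degeneracy analysis --- so for this statement it suffices that $X$ is precisely the locus where every $\la g_v u_e,g_w u_e\ra\neq 0$ and $F$ is smooth.
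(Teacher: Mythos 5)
Your proof is correct and follows essentially the same route as the paper's: vanishing of the $u_e$-derivatives in the directions $u_e^\perp$ and $iu_e^\perp$ forces $g_v^{-1}g_w u_e\in\C u_e$ (item (1)), and then the derivative along $g_v\mapsto g_ve^{t\xi}$ reduces via (1) to $\sum_e \pm c_e\la\xi u_e,u_e\ra$, which your moment-map identity $\la\xi u,u\ra=-i\la\xi,\pi(u)\ra_{\bog}$ is exactly the paper's step $\tr(\xi\Pi_i)=2i\la u_i,\xi u_i\ra$ under the identification $\Pi_i\mapsto P_i$. Your explicit handling of incoming versus outgoing edges and the remark about $c_e=0$ are minor refinements of bookkeeping the paper leaves implicit (it assumes the strict inequalities in (T) immediately afterwards, which force $c_e>0$), not a different argument.
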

\begin{proof}

Derivate $F$ with respect to the variable $u_e$ in the direction $v_e$. We have $DF(v_e)=\frac{c_e}{\la g_v u_e,g_w u_e\ra}(\la g_v v_e,g_w u_e\ra+\la g_v u_e,g_w v_e\ra)$.
Denote by $u_e^{\perp}$ the element of $S^3$ satisfying $\la u_e,u_e^{\perp}\ra=0$ and $\det(u_e,u_e^{\perp})=1$. Then, write $g_v^{-1}g_{w}$ in the basis $(u_e,u_e^{\perp})$. Taking $v_e=u_e^{\perp}$ and $v_e=i u_e^{\perp}$, we find that $g_v^{-1}g_w$ is diagonal in the basis $(u_e,u_e^{\perp})$. This is equivalent to the first item of the proposition.

Consider an element $\xi$ in Lie$(G)$ and replace $g_v$ by $g_v e^{t\xi}$ for some $t>0$. Then compute the derivative of $F$ at $t=0$. Three terms contribute to the sum, corresponding to the edges incoming to $v$. Suppose that they are oriented from $v$ to $v_1,v_2,v_3$ by edges denoted by $e_1,e_2,e_3$ respectively. For simplicity, we note $g=g_v$, $g_i=g_{v_i}$, $u_i=u_{e_i}$ and $c_i=c_{e_i}$.

We have $F'(0)=\sum_{i=1}^3 c_i \frac{\la g \xi u_i,g_i u_i\ra}{\la g u_i,g_i u_i\ra}=\sum_{i=1}^3 c_i \la \xi u_i,u_i\ra$. The last equality comes from the first item. 
Let $\Pi_i$ be the element of End$(V)$ acting by $i$ on $u_i$ and $-i$ on $u_i^{\perp}$. 
Then, from $\tr(\xi \Pi_i)=2i\la u_i,\xi u_i\ra$ we have that $\sum_i c_i \Pi_i$ is $0$. Using the standard identification between anti-hermitian operators and $\bog$ which sends $\Pi_i$ to $P_i$, we obtain the second item of the proposition.
\end{proof}
Suppose that the coloring $c$ satisfy the strict triangle inequalities (T): then at any vertex, the triple $P_{e_1},P_{e_2},P_{e_3}$ has rank 2. In particular, the corresponding pair $(g_v,u_e)$ is in $X$. We will make this assumption in the sequel.

The integral presents some obvious symmetries: the integrand depends on $u_e$ through its image $\pi(u_e)\in S^2$. Hence, we can replace the integral over $(S^3)^E$ by an integral over $(S^2)^E$ integrating over the fiber of the Hopf fibration $(S^1)^E$. Moreover replacing $(g_v,u_e)$ by $(gg_v,u_e)$ do not change the integral as replacing $(g_v,u_e)$ by $(g_vg^{-1},g u_e)$. Hence, the integral can be performed over $Y=X/(S^1)^E\times G\times G$ where $(\alpha_e,g,h)$ acts on $(g_v,u_e)$ by $(gg_vh^{-1},h(\alpha_e u_e))$. Notice that the stabilizer of the action of $(S^1)^E\times G\times G$ on $X$ is $\{\pm 1\}$, hence the quotient $Y$ is a smooth manifold of dimension $12N-6$. Let us denote by $\tilde{F}:Y\to \C/2i\pi\Z$ the induced map.

We denote by $I\subset \bog^E/G$ the isometry classes of tuples $(P_e)_{e\in E}$ satisfying the equation (2) of Proposition \ref{critical}. They encode the critical points of $\tilde{F}$ in the following way: 
\begin{proposition}\label{critical2}
The set $C$ of critical points of $\tilde{F}$ are in bijection with the set of triples $(P,Q,(g_v))$ in $I\times I\times \Big(G^V/\{\pm 1\}\Big)$ where for all half-edges $(e,v)$ we have $g_v P_e=Q_e$. Moreover, the map $C\to I\times I$ which forgets the third term is surjective and its fibers have cardinality $2^{2N-1}$. 
\end{proposition}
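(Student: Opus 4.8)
The plan is to realize $C$ as the quotient $\mathrm{Crit}(F)/\Sigma$ of the critical set of $F$ by the symmetry group $\Sigma=(S^1)^E\times G\times G$ (which is legitimate since $F$ is $\Sigma$-invariant, so $\mathrm{Crit}(\tilde F)=\mathrm{Crit}(F)/\Sigma$), and to make the bijection explicit through the two configurations carried by a critical point. First I would define, for a critical point $(g_v,u_e)$ of $F$, the tuple $P=(P_e)$ with $P_e=\pi(u_e)$ (with the orientation sign of Proposition \ref{critical}) and the tuple $Q=(Q_e)$ with $Q_e=\mathrm{Ad}(g_v)P_e$ for $e:v\to w$. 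Condition (1) of Proposition \ref{critical} says $g_vu_e$ and $g_wu_e$ span the same line of $\P(V)=S^2$, so $\mathrm{Ad}(g_v)P_e=\mathrm{Ad}(g_w)P_e$ and $Q_e$ is well defined independently of the chosen endpoint; condition (2) gives $\sum_i c_{e_i}P_{e_i}=0$ at each vertex, and applying $\mathrm{Ad}(g_v)$ shows $\sum_i c_{e_i}Q_{e_i}=0$ as well. Hence both $P$ and $Q$ represent elements of $I$, and by construction $\mathrm{Ad}(g_v)P_e=Q_e$ for every half-edge $(e,v)$. This is the map $\Phi\colon(g_v,u_e)\mapsto(P,Q,(g_v))$.

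Next I would record how $\Sigma$ acts on these data. The factor $(S^1)^E$ only alters the phase of the lift $u_e$ and leaves $(P,Q,(g_v))$ unchanged, while $(g,h)\in G\times G$ sends $P\mapsto\mathrm{Ad}(h)P$, $Q\mapsto\mathrm{Ad}(g)Q$ and $(g_v)\mapsto(gg_vh^{-1})$, as one checks directly from the action $g_v\mapsto gg_vh^{-1}$, $u_e\mapsto h(\alpha_eu_e)$. Therefore $\Phi$ descends to $C$, its first two components being the classes $[P],[Q]\in I$. Conversely the critical point is reconstructed from $(g_v,P_e)$ by choosing any unit lift $u_e$ of $P_e$: condition (1) then holds because $\pi(g_vu_e)=Q_e=\pi(g_wu_e)$, condition (2) holds since $P\in I$, and the strict triangle inequalities of Subsection \ref{hypo} place the point in $X$. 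This identifies $C$ with the set of triples $(P,Q,(g_v))\in I\times I\times(G^V/\{\pm1\})$ satisfying $\mathrm{Ad}(g_v)P_e=Q_e$, once representatives of the classes of $I$ are fixed.

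The heart of the count is the analysis at a single vertex. At $v$ with outgoing edges $e_1,e_2,e_3$, both triples $(P_{e_i})$ and $(Q_{e_i})$ consist of unit vectors obeying the same relation $\sum_i c_{e_i}(\cdot)=0$ with the same coefficients; solving for the pairwise inner products (for instance $\la P_{e_1},P_{e_2}\ra=(c_{e_3}^2-c_{e_1}^2-c_{e_2}^2)/(2c_{e_1}c_{e_2})$) shows that the two Gram matrices coincide. Since, under the hypotheses of Subsection \ref{hypo}, these triples have rank $2$, there is a unique $R\in\mathrm{SO}(3)$ with $RP_{e_i}=Q_{e_i}$ (existence: adjust any matching orthogonal map by the reflection fixing their common $2$-plane; uniqueness: a rotation fixing a $2$-plane pointwise is the identity), and hence exactly two preimages $\pm g_v\in G$ under the double cover $G\to\mathrm{SO}(3)$. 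Running over the $\#V=2N$ vertices independently, the set of admissible $(g_v)\in G^V$ is a torsor under $\{\pm1\}^V$ of cardinality $2^{2N}$; in particular it is non-empty for every pair of classes, which gives surjectivity of $C\to I\times I$.

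Finally I would fix the fiber cardinality by quotienting this torsor by the residual symmetries. Because each configuration has rank at least $2$ globally, its stabilizer under $\mathrm{Ad}(G)$ is exactly $\{\pm1\}$; so after fixing representatives $P_0,Q_0$ the residual group is $\{\pm1\}_g\times\{\pm1\}_h$, and being central it acts on the torsor of $(g_v)$'s purely through the global sign $g_v\mapsto(gh)\,g_v$. This action is free, so the fiber is $2^{2N}/2=2^{2N-1}$, which is precisely the passage to $G^V/\{\pm1\}$ in the statement. The step I expect to be most delicate is exactly this bookkeeping of signs and stabilizers: one must verify that the two a priori independent central $\{\pm1\}$'s collapse to a single global sign acting freely, which is what turns $2^{2N}$ into $2^{2N-1}$. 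The conceptual crux, by contrast, is the Gram-matrix observation, since it is what forces $g_v$ to exist at every vertex and thereby yields surjectivity onto all of $I\times I$.
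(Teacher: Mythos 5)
Your proof is correct and follows essentially the same route as the paper: you extract $(P,Q)$ from a critical point via the Hopf projection, reconstruct each $g_v$ as one of the two lifts in $G$ of the unique rotation matching the rank-two triples at each vertex (where the paper invokes ``same lengths and same angle'' you make this explicit via the Gram matrix), and divide the $2^{2N}$ choices of lifts by the residual sign action of $\{\pm 1\}\subset G\times G$ to obtain $2^{2N-1}$. Your more detailed bookkeeping of stabilizers --- the two central $\{\pm 1\}$'s acting through a single free global sign --- is a faithful expansion of the paper's terse closing remark and introduces nothing different in substance.
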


We remark that if $N>1$, $I$ is never reduced to 1 element as if $P$ belongs to $I$, $-P$ is an element of $I$ distinct from $P$.
\begin{proof}
Let $(g,u)$ be a critical point of $F$. Then the family $P_e=\pi(u_e)$ belongs to $I$. 
For any half-edge $(e,v)$, set $Q_e=\pi(g_v u_e)=\pi (g_w u_e)$. For any vertex $v$ with edges $e_1,e_2,e_3$ incoming to it, the relation $\sum_i c_{e_i}P_{e_i}=0$ implies $\sum_i c_{e_i}Q_{e_i}=0$, hence family $(Q_e)$ also belongs to $I$. 
Conversely, given two families $(P_e),(Q_e)$ in $I$, we choose vectors $u_e,s_e\in S^3$ satisfying $\pi(u_e)=P_e$ and $\pi(s_e)=Q_e$ . Then for every vertex $v$ surrounded by $e_1,e_2,e_3$, we need to find  an element $g_v\in G$ such that for all $i$, $g_v u_{e_i}$ is proportional to $s_{e_i}$. As we required that the coefficients $c_{e_i}$ satisfy the strict triangular inequalities, the pair $P_{e_1},P_{e_2}$ is linearly independent in $\bog$. As $Q_{e_1},Q_{e_2}$ have same lengths and same angle by hypothesis, there is a unique rotation of $\bog$ which sends $P_{e_i}$ to $Q_{e_i}$ for all $i$. This rotation lifts in two possible ways in $G$ giving our choices for $g_v$. Note that the action of $\{\pm 1\}\in G\times G$ divides by 2 the number of critical points.
\end{proof}

\subsection{Non-degeneracy hypothesis}\label{hypo}

We would like to understand which conditions ensure that the critical points of $\tilde{F}$ are isolated in $Y$. At first, we concentrate on the family of vectors $(P_e)$ associated to critical points.

Let $\boE$ be the set of maps from $E$ to $\bog$. We consider it as an Euclidian space where the scalar product is induced from the scalar product $|\xi|^2=-\frac{1}{2}\tr(\xi^2)$ on $\bog$. This space will then be identified alternatively as the chain or cochain complex of $\Gamma$ with coefficients in $\bog$. As edges of $E$ are oriented, there is a boundary operator $\partial:\boE\to C_0(\Gamma,\bog)$ and we set $\boH=\ker\partial$. We see that any critical point in $X$ produces an element $\zeta$ of $\boH$ by setting $\zeta_e=c_eP_e$. Conversely, any element $\zeta$ of $\boH$ with $|\zeta_e|=c_e$ produces an element of $I$ (see Proposition \ref{critical2}). 

A tangent vector to $\zeta$ is an element $\zeta'\in \boH$ such that $\la \zeta_e,\zeta'_e\ra=0$ for all $e$ (because $|\zeta_e|=c_e$). We interpret this formula as the scalar product of $(\zeta_e \delta_e)$ and $\zeta'$. On the other hand, a tangent vector correspond to a global symmetry if and only if there exists $\xi\in\bog$ such that $\zeta'_e=\xi\times \zeta_e$ for all $e$. 
We count that the dimension of $\boH$ is $3(N+1)$, the number of conditions imposed by $\zeta$ is $3N$ (one for each edge), whereas the symmetry gives 3 dimensions (because the family $\zeta_e$ has rank at least 2). Hence, an element of $\bog^E/G$ associated to $P$ is isolated if the family $(P_e\delta_e)$ projected orthogonally on $\boH$ is linearly independent. With this interpretation in mind, we make the following assumption:

\begin{hypothesis}\label{H1} Let $(c_e)$ be a coloring of $\Gamma$ such that all triangle inequalities in (T) are strict. We suppose that all elements $\zeta\in \boH$ such that $|\zeta_e|=c_e$ satisfy the non-degeneracy condition that the family $(\Pi_\boH(\zeta_e\delta_e))_{e\in E}$ is linearly independent where $\Pi_\boH:\boE\to \boH$ is the orthogonal projector on $\boH$. This implies that $I$ is a finite set.
\end{hypothesis}

\begin{remark}\label{hypo-remark}
The preceding hypothesis is equivalent to saying that for any $(\xi_v)\in C_0(\Gamma,\bog)$, if $P_e\times(\xi_w-\xi_v)=0$ for all edges $e:v\to w$ then all the $\xi_v$ are equal. Let us show it:

The image of $d:C_0(\Gamma,\bog)\to\boE$ is orthogonal to the space $\boH$. Let $\boU$ be the subspace $\boU=\oplus_e \R P_e \subset \boE$. The non-degeneracy assumption is that the orthogonal projection of $\boU$ on $\boH$ is injective. Hence, by standard linear algebra, the orthogonal projection of $\boU^{\perp}$ on $\boH^{\perp}$ is surjective. This implies that if some vector $\zeta_e$ in $\boH^{\perp}$ satisfies $P_e\times \zeta_e=0$ for all $e$ then $\zeta_e=0$. 
\end{remark}

For any critical point $x$, one can define the \emph{phase function} $\tau^x:E\to S^1$ in the following way. Set $x=(g_v,u_e)$. Then, by the first item of Proposition \ref{critical}, $g_vu_e$ and $g_w u_e$ are proportional unit vectors, moreover the phase factor $\tau^x_e=\la g_v u_e,g_w u_e\ra$ depends only on the class of the critical point $x$ in $Y$. We remark that the generalized phase of a critical point indexed by $(P,Q,g_v)\in I\times I\times G^V$ depends on $g_v$ only up to a sign and is necessarily equal to $\pm 1$ if $P=Q$. We assume that this is the only case when the phase function takes the value $\pm 1$:

\begin{hypothesis}\label{H2} For any $P$,$Q$, distinct elements of $I$, the phase function of the associated critical points does not take the value $\pm 1$.
\end{hypothesis}

Let $P,Q$ be distinct elements of $I$, and $\tau$ be the associated phase function. Write $\tau_e=e^{i\theta_e}$. Then, $\theta_e$ is well-defined modulo $\pi$. Consider the following quadratic form on $\bog^V$:
$$q=\sum_e 2c_e\left( -i\cot(\theta_e)||Q_e\times(\xi_v-\xi_w)||^2+i\la Q_e,\xi_v\times\xi_w\ra\right).$$

\begin{hypothesis}\label{H3}
 For any $P$,$Q$ distinct elements of $I$, the quadratic form $q$ has co-rank 6.
\end{hypothesis}

Let us develop an example which is the main motivation of this section:
\begin{example}
Suppose that $\Gamma$ is a planar graph and that $I=\{P,-P\}$. Then there is up to isometry a unique polyhedron $\Delta\subset \bog$ whose 1-skeleton is dual to $\Gamma$ (hence whose faces are triangles) and such that for any oriented edge $e$ of $\Gamma$ the dual edge in $\Delta$ is vectorially equal to $c_eP_e$: in particular, it has length $c_e$. 
\end{example}
The non-degeneracy condition $(H1)$ is equivalent to an infinitesimal rigidity condition on $\Delta$. For instance, if $\Delta$ is convex, this condition is automatically satisfied by Cauchy's theorem (see for instance \cite{AiZi}, Chapter 13). The generalized phase function has the following nice interpretation: let $(g_v,u_e)$ be a critical point associated to the pair $(P,-P)$. This means that $\pi(u_e)=P_e$ and $\pi(g_vu_e)=-P_e$. Let $v$ be a vertex of $\Gamma$, dual to a face $F_v$ of $\Delta$. Suppose that $v$ has three outgoing edges $e_1,e_2,e_3$. Then, $g_v$ lifts the unique rotation mapping $P_{e_i}$ to $-P_{e_i}$, that is the rotation of $\pi$ in the plane supporting $F_v$. Now, given an edge $e$, one has $\tau_e=\la g_v u_e,g_w u_e\ra=\pm e^{i\theta}$ where $2\theta$ is the angle of the rotation $g_v^{-1}g_w$ around $P_e$. Hence, $\theta$ is the angle modulo $\pi$ between the faces $F_v$ and $F_w$. 
In particular, if $\Delta$ is a non-degenerate polyhedron, its corresponding graph satisfies (H2). We do not know which geometric condition on $\Delta$ would imply that (H3) is valid.

\subsection{Second variation formula}\label{Subsection:hessian}
Given a point $(g_v,u_e)\in X$, we can build a a coordinate system around it using the variables $(\xi_v)\in\bog^V$, $(\lambda_e)\in \C^E$ and $\alpha_e\in (S^1)^{E}$. The parametrization is given by $(e^{\xi_v}g_v,\alpha_e u_e(\lambda_e))$ where for any $u\in S^3$ and $\lambda\in \C$ we set 
$$u(\lambda)=\frac{1}{\sqrt{1+|\lambda|^2}}(u+\lambda u^{\perp})=(1-|\lambda|^2/2)u+\lambda u^{\perp}+o(|\lambda|^2).$$ 
Let $s^e_v=g_v u_e$ and notice that $g_v u_e(\lambda_e)=s^e_v(\lambda_e)$.
Let $ix_v^e=\la s^e_v,\xi_v s^e_v\ra$, $\mu_v^e=\la s_v^{e\perp},\xi_v s^e_v\ra$, $\tau_e=\la s^e_v,s^e_w\ra$. In what follows, the values of $g_v,u_e,s^e_v$ are to be considered as fixed and $\xi_v,\lambda_e$ (and consequently those of $x^e_v,\mu^e_v$) as variable. Notice that $x^e_v\in \mathbb{R}$, that $\tau_e$ was already defined in Proposition \ref{critical}, and that $F:X\to R$ does not depend on the coordinates $\{\alpha_e\}$, so we set them to $1$ in the following:
\begin{proposition}\label{order2development}
The Taylor expansion of $F$ in a neighborhood of a point $(g_v,u_e)\in X$ is up to the second order: 
\begin{eqnarray*}
F(e^{\xi_v}g_v,u_e(\lambda_e))&=&\sum_e c_e\ln \tau_e+\sum_e c_e i(x^e_w-x^e_v)\\
&&+\sum_e c_e \frac{\overline{\tau_e}}{\tau_e}(\ba{\mu_v^e}+\ba{\lambda_e})(\mu_w^e+\lambda_e)\\
&-&\sum_e c_e\big(|\lambda_e|^2+|\mu_v^e|^2/2+|\mu_w^e|^2/2+\ba{\lambda_e}\mu_v^e+\lambda_e\ba{\mu_w^e}\big)\\
&+&o(\sum_v|\xi_v|^2+\sum_e|\lambda_e|^2)
\end{eqnarray*}
In particular, on critical points $\sum_e i(x^e_v-x^e_w)=0$ and $\frac{\overline{\tau_e}}{\tau_e}=\tau_e^{-2}$.
\end{proposition}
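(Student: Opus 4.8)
The plan is to compute the second-order Taylor expansion of $F$ directly by expanding each summand $c_e \ln\la s^e_v(\lambda_e), s^e_w(\lambda_e)\ra$ after the perturbation $g_v \mapsto e^{\xi_v}g_v$. First I would record the first-order expansions of the basic quantities. Writing $s^e_v(\lambda_e) = (1-|\lambda_e|^2/2)s^e_v + \lambda_e (s^e_v)^\perp + o(|\lambda_e|^2)$ and $e^{\xi_v}s^e_v = s^e_v + \xi_v s^e_v + \tfrac12 \xi_v^2 s^e_v + o(|\xi_v|^2)$, I would expand $e^{\xi_v}s^e_v(\lambda_e)$ to second order in the combined variables. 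Using the definitions $ix^e_v = \la s^e_v, \xi_v s^e_v\ra$ (real since $\xi_v$ is anti-hermitian) and $\mu^e_v = \la (s^e_v)^\perp, \xi_v s^e_v\ra$, together with the orthonormality of the basis $(s^e_v, (s^e_v)^\perp)$, I would assemble the scalar product $\la e^{\xi_v}s^e_v(\lambda_e), e^{\xi_w}s^e_w(\lambda_e)\ra$ as $\tau_e$ plus linear and quadratic corrections.

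The key bookkeeping step is to organize the expansion of this scalar product into its constant, linear, and quadratic parts. The constant term is $\tau_e = \la s^e_v, s^e_w\ra$. The linear term should produce $\tau_e\big(i(x^e_w - x^e_v)\big)$ after one notices that the diagonal contributions $\la \xi_v s^e_v, s^e_w\ra$ and $\la s^e_v, \xi_w s^e_w\ra$ combine; I would expand $\la s^e_v, s^e_w\ra$ and its perturbations using the resolution of identity in each local orthonormal frame. For the quadratic part I would track three sources of contributions: the $\lambda_e$-only terms giving the $|\lambda_e|^2$ and cross terms, the $\xi$-only terms giving the $|\mu^e_v|^2/2$ and $|x^e_v|^2$-type pieces (the halving coming from the second-order term $\tfrac12\xi_v^2 s^e_v$), and the mixed $\lambda_e\xi$ terms. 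Then I would take the logarithm: since $\ln\la\cdots\ra = \ln\tau_e + \ln\big(1 + (\text{corrections})/\tau_e\big)$, I expand $\ln(1+t) = t - t^2/2 + o(t^2)$, so the quadratic part of the logarithm is (quadratic correction)$/\tau_e$ minus one-half the square of the linear correction$/\tau_e$. Collecting these and matching against the stated formula — in particular producing the factor $\overline{\tau_e}/\tau_e$ in the surviving cross term $(\ba{\mu^e_v}+\ba{\lambda_e})(\mu^e_w+\lambda_e)$ — is the heart of the computation.

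The main obstacle I expect is precisely this $\ln(1+t)$ step: keeping the first-order corrections and their squares straight so that the cancellations between the genuinely quadratic contributions and the $-t^2/2$ contributions leave exactly the displayed expression, including the asymmetric appearance of $\mu^e_v$ versus $\mu^e_w$ and the factor $\overline{\tau_e}/\tau_e$ rather than $1/\tau_e$. The phase factor $\overline{\tau_e}/\tau_e$ arises because the cross term involves $\overline{(\cdots)}$ from the anti-linearity of $h$ on the left together with a $\tau_e$ in the denominator from the logarithm; tracking which factors are conjugated (recalling $h$ is anti-linear in the first slot, so $x^e_v, \mu^e_v$ are built from $\la (s^e_v)^{(\perp)}, \xi_v s^e_v\ra$ with the adjoint relation $\la \xi_v a, b\ra = -\la a, \xi_v b\ra$) is where sign and conjugation errors would creep in.

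Finally I would verify the two closing assertions. On a critical point, item (1) of Proposition \ref{critical} gives $g_v u_e = \tau_e g_w u_e$, i.e. $s^e_v = \tau_e s^e_w$ with $|\tau_e| = 1$; hence $\overline{\tau_e} = \tau_e^{-1}$ and so $\overline{\tau_e}/\tau_e = \tau_e^{-2}$. For the vanishing of the first-order term $\sum_e c_e\, i(x^e_v - x^e_w)$, I would use item (2): grouping the linear terms by vertex, the coefficient of $\xi_v$ is $\sum_{i} c_{e_i}\la s^{e_i}_v, \xi_v s^{e_i}_v\ra$, which by the identification of $\Pi_i$ with $P_{e_i}$ (as in the proof of Proposition \ref{critical}) equals $\tfrac{1}{2}\tr\big(\xi_v \sum_i c_{e_i}\Pi_i\big) = \la \xi_v, \sum_i c_{e_i} P_{e_i}\ra$, and this vanishes exactly because $\sum_i c_{e_i}P_{e_i}=0$. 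This records the reduction to the critical locus and sets up the Hessian computed in the subsequent stationary-phase analysis.
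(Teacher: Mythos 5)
Your plan follows the paper's proof essentially verbatim: expand $e^{\xi_v}s^e_v(\lambda_e)$ in the frame $(s^e_v,(s^e_v)^{\perp})$, assemble the Hermitian product, apply $\ln(1+t)=t-t^2/2$ together with $|\xi_v|^2=(x^e_v)^2+|\mu^e_v|^2$ so the $x$-terms cancel, and deduce the closing statements from Proposition \ref{critical}. The one ingredient worth stating explicitly (as the paper does) is $\la (s^e_v)^{\perp},(s^e_w)^{\perp}\ra=\overline{\tau_e}$, which follows from the anti-linearity of the map $u\mapsto u^{\perp}$ and is what actually places $\overline{\tau_e}$ in front of the cross term, rather than the anti-linearity of $h$ alone.
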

 \begin{proof}
Notice that in the basis $(s^e_v,(s^e_v)^{\perp})$ we have $\xi_v=\begin{pmatrix} ix_v^e & -\ba{\mu_v^e}\\ \mu_v^e & -i x^e_v\end{pmatrix}$; moreover since $\la s^e_v,s^e_w\ra=\tau_e$, then $\la(s^e_v)^{\perp},(s^e_w)^{\perp}\ra=\overline{\tau_e}$ ( $=\tau_e^{-1}$ on critical points by Proposition \ref{critical}). 
To compute the first terms of $F(e^{\xi_v}g_v,u_e(\lambda_e))=\sum\limits_{e:v\to w} c_e \ln \la e^{\xi_v}s^e_v(\lambda_e),e^{\xi_w}s^e_w(\lambda_e)\ra$, we first compute 
$$e^{\xi_v}s^e_v(\lambda_e)=\begin{pmatrix} 1+ix_v^e-|\xi_v|^2/2& -\ba{\mu_v^e}\\ \mu_v^e& 1-ix_v^e-|\xi_v|^2/2\end{pmatrix}\begin{pmatrix} 1-\frac{|\lambda_e|^2}{2} \\\lambda_e\end{pmatrix}+h.o.t.
$$
Then we get $\la e^{\xi_v}s^e_v(\lambda_e),e^{\xi_w}s^e_w(\lambda_e)\ra=\tau_e(1-ix_v^e+ix_w^e+x_v^ex_w^e-|\lambda_e|^2-|\xi_v|^2/2-|\xi_w|^2/2-\mu^e_v\ba{\lambda_e}-\ba{\mu^e_w}\lambda_e)+\overline{\tau}_e(\ba{\mu^e_v}+\ba{\lambda_e})(\mu^e_w+\lambda_e)+h.o.t.$ Taking the logarithm, expanding it up to order $2$ terms, and recalling that $|\xi_v|^2=-\frac{1}{2}\tr (\xi_v^2)=(x^e_v)^2+|\mu^e_v|^2$ we get the formula of the proposition. The last statement is a consequence of Proposition \ref{critical}.
 \end{proof}
 
 To obtain a nicer formula, in the case of a critical point, we introduce the variables $z_v^e=\mu_v^e$ and $z_w^e=\tau_e^{-2}\mu_w^e$. 
After some computation, by Proposition \ref{order2development}, one has that on critical points ${\textrm Hess}(F)_x(\xi,\lambda)=-q(\xi,\lambda)$ where: 
\begin{eqnarray}\label{hessien}
q(\xi,\lambda)&=&\sum_e c_e\big(2(1-\tau_e^{-2})|\lambda_e|^2+2\lambda_e\tau_e^{-2}(\ba{z_w^e}-\ba{z_v^e})+2\ba{\lambda_e}(z_v^e-z_w^e)\notag\\&&+|z_v-z_w|^2+z_v\ba{z_w}-z_w\ba{z_v}\big)\\ 
&=&\sum_e c_e \Big(2(1-\tau_e^{-2})\Big|\lambda_e-\frac{z^e_w-z^e_v}{1-\tau_e^{-2}}\Big|^2\notag\\
&& +\frac{\tau_e+\tau_e^{-1}}{\tau_e-\tau_e^{-1}} |z^e_w-z^e_v|^2+z^e_v\ba{z^e_w}-z^e_w\ba{z^e_v}\Big)\text{ if }\tau_e^2\ne 1\forall e\in E.
\end{eqnarray}

\subsection{Applying the stationary phase method}\label{Subsection:stationaryphase}
Let us now perform the stationary phase approximation when replacing the coloring $c$ by $kc$ and letting $k$ go to infinity in $\int_{Y} \exp(k\tilde{F})\dd \mu$. In this formula, the measure $\mu$ is obtained from the Haar measure on $(S^3)^{2N}\times (S^3)^{3N}$ by integration over the action of $(S^1)^E\times G\times G/\{\pm 1\}$, equipped with its Haar density. 
In this subsection as well as in the following we rely on the notation and machinery recalled in Appendix \ref{gaussian}.

By assumption, any critical point $x$ is isolated in $Y$. Provided that ${\textrm Hess}(\tilde{F})_x$ is non-degenerate, we can apply the stationary phase expansion theorem (Theorem 7.7.5 of \cite{H}) to $\tilde{F}$ which is a smooth function with non-positive real part, and obtain that the local contribution of $x$ to $[\Gamma,kc]$ when $k$ goes to infinity is: 

\begin{equation}\label{phasestationnaire}
I(x)=\prod_e \la e\ra\  e^{k\tilde{F}(x)}\left(\frac{(2\pi)^{12N-6}}{\det(-k {\textrm Hess}(\tilde{F})_x,\mu)}\right)^{1/2}=\prod_e \la e\ra\  e^{k\tilde{F}(x)}I(kq).
\end{equation}

The proof of Theorem \ref{asymptotic} consists then in computing for any critical point the gaussian integral $I(q)$ where $q$ is the opposite of the Hessian computed in the preceding subsection. We observe that in Equation \eqref{hessien} this quadratic form is expressed in terms of $\{ \lambda_e,z^e_v,\overline{z}^{e}_v\}$ and that its restriction to $\{z^e_v=z^e_w=0\}$ is diagonal. Thus we would like to perform a partial integration on the $\lambda_e$ terms which form local coordinates for the $(S^2)^E$-part of $Y$. 

There are two different cases to handle: in the first case one has $\tau_e^2=1$ for all $e$ in $E$. The quadratic form $q$ is then degenerate with respect to the $\lambda$ coordinates and we deform it with a parameter $\kappa$. In the second case, one has $\tau_e^2\ne 1$ for all $e$, thanks to Hypothesis 2. Fix a critical point described by a pair $(P,Q)\in I\times I$ (as in Proposition \ref{critical2}); to avoid a cumbersome notation, in what follows we will suppress the indices $P,Q$ except in the definition of the quadratic forms used in the statement of Theorem \ref{asymptotic}.
\subsubsection{Deformation and degeneracies}
The kernel of $q$ contains the image of $\bog\times \bog$. Let $\xi$ and $\eta$ be the two variables acting on $(g_v,u_e)$ by $(e^{\xi}g_v e^{-\eta},e^{\eta}u_u)$. For an edge $e:v\to w$ set $p_e=\la u_e^{\perp},\eta u_e\ra$ and $q_e=\la s_v^{e\perp},\xi s^e_v\ra$. The infinitesimal action of $\xi$ and $\eta$ in the tangent space at $(g_v,u_e)$ is $(\xi-g_v\eta g_v^{-1},\eta)$. One computes in terms of $p_e$ and $q_e$ this action as $\lambda_e=p_e$, $z_v^e=\mu_v^e=\la s_v^{e\perp},(\xi-g_v\eta g_v^{-1})s^e_v\ra=q_e-p_e$ and $z_w^e=q_e-\tau_e^{-2}p_e$. 

Consider the deformation $q_\kappa$ of $q$ obtained by replacing $\tau_e$ by $\tau_{e\kappa}=\kappa \tau_e$. One recovers the original one by letting $\kappa\in (1,+\infty)$ go to 1. After some computations one may check that we have $q_{\kappa}|_{\bog\times\bog}=4(\kappa-1)\sum_e c_e |p_e|^2+o(\kappa-1)$ 
Hence, denoting by $r_P$ the quadratic form on $\bog$ given by $r_P(\eta)=\sum_e c_e |p_e|^2$ we can remove the indeterminacy in $\eta$ as explained in Appendix \ref{gaussian}. We have: $$I(q)=\lim_{\kappa\to 1}\frac{I(q_\kappa)}{I(q_\kappa|_{\bog\times\bog})}=\lim_{\kappa\to 1}\frac{(4\kappa-4)^{3/2}I(q_\kappa)}{I(r_P)}.$$

\subsubsection{Partial integration}
We now remark that this deformation also allows to integrate partially over the coordinates $(\lambda_e)$. More precisely, let $\sigma_{\kappa}$ be the restriction of $q_\kappa$ to the $\lambda$-coordinates, setting $\sigma_{\kappa}=\sum_e 2c_e(1-\tau_{e\kappa}^{-2})|\lambda_e|^2$, 
one obtains $I(q_\kappa)=I(\sigma_{\kappa})I(q^{\kappa})$ where 
$$q^{\kappa}=\sum_e c_e\left( \frac{\tau_{e\kappa}+\tau_{e\kappa}^{-1}}{\tau_{e\kappa}-\tau_{e\kappa}^{-1}}|z_v^e-z_w^e|^2+z_v^e\ba{z_w^e}-\ba{z_v^e}z_w^e\right).$$

We can reformulate the preceding quadratic form by introducing only operations on $\bog$.
For instance, one has $|z_v^e|^2=|Q_e\times \xi_v|^2$. We also have $z_v^e\ba{z_w^e}-z_w^e\ba{z_v^e}=-i\tr(\Pi\xi_v\xi_w)=2i\la Q_e,\xi_v\times \xi_w\ra$, where $\Pi$ is the matrix whose eigenvalues in the basis $(s^e_v,s_v^{e\perp})$ are $i$ and $-i$ and $2\xi_v\times\xi_w=[\xi_v,\xi_w]$. This may be proved by direct computation in the basis $(s^e_v,s_v^{e\perp})$. Hence, we can write:

\begin{equation}\label{eqqk'}
q^{\kappa}_{P,Q}=\sum_e c_e\left( \frac{\tau_{e\kappa}+\tau_{e\kappa}^{-1}}{\tau_{e\kappa}-\tau_{e\kappa}^{-1}}|Q_e\times(\xi_v-\xi_w)|^2+2i\la Q_e,\xi_v\times\xi_w\ra\right).
\end{equation}

{\bf First case:} $\tau_e^2=1$ for all $e$ in $E$. \\
Write $q^{\kappa}=\frac{\kappa^2+1}{\kappa^2-1}q_Q+q'$ where we set $q'=\sum_e 2i c_e\la Q_e,\xi_v\times\xi_w\ra$ and $$q_Q=\sum_e c_e|Q_e\times(\xi_v-\xi_w)|^2.$$

Using remark \ref{hypo-remark}, the kernel of $q_Q$ is the subspace $\bog$ given by the equations $\xi_v=\xi$ for all $v$. When $\kappa$ goes to $1$, the quadratic form $q'$ becomes negligible.
Hence, we have $I(q^{\kappa})\simeq(\kappa-1)^{3N-3/2}I(q_Q)$, and using $I(|\lambda|^2,\mu_H^{\P^1})=I(|\lambda|^2,\mu_{euc}^{\P^1})/\pi=2$ one computes:
$$I=\lim_{\kappa\to 1}\frac{(4\kappa-4)^{3/2}}{I(r_P)}\frac{2^{3N}}{(4\kappa-4)^{3N}\prod_e c_e}
\frac{I(q_Q)}{(\kappa-1)^{3/2-3N}}=\frac{2^{3-3N}I(q_Q)}{I(r_P)\prod_e c_e}.$$

{\bf Second case:} $\tau_e^2\ne 1$ for all $e$ in $E$.

Setting $\kappa=1$ in $q^{\kappa}$, we get the quadratic form $q''$: 
\begin{equation}\label{eqq'}
q''_{P,Q}=\sum_e c_e\left( -i \cot(\theta_e)|\Pi_{Q_e^{\perp}}(\xi_v-\xi_w)|^2+2i\la Q_e,\xi_v\times\xi_w\ra\right).
\end{equation}

By hypothesis (H3) this form has corank 6 hence, the following formula makes sense:
\begin{eqnarray}
I&=&\lim_{\kappa\to 1}\frac{(4\kappa-4)^{3/2}}{I(r_P)}I(\sigma_1)I(q^{\kappa})\notag
=\lim_{\kappa\to 1}\frac{(4\kappa-4)^{3/2}}{I(r_P)}\frac{2^{3N}}{\prod_e 2c_e(1-\tau_e^{-2})}I(q^{\kappa})\\
&=&\lim_{\kappa\to 1}\frac{2^{3}(\kappa-1)^{3/2}I(q^{\kappa})}{I(r_P)\prod_e c_e(1-\tau_e^{-2})}=\lim_{\kappa\to 1}\frac{2^{3-3N}i^N\prod_e\tau_e(\kappa-1)^{3/2}I(q^{\kappa})}{\prod_e c_e\sin(\theta_e)I(r_P)}
\label{contributionPQ}
\end{eqnarray}

\subsubsection{Collecting the critical points}\label{Subsection:proof}
In this section, we apply the previous computations to all critical points and collect the results. 
There are two cases: whether the critical point corresponds to a pair $(P,P)$ or to a pair $(P,Q)$ with $P\ne Q$. 

{\bf First case:}
Fix a $3N$-tuple $(u_e)$ representing $P_e$. All critical points associated to the pair $(P,P)$ have the form $((-1)^{\epsilon_v},u_e)$ where $\epsilon_v\in\{\pm 1\}$. 
In the preceding subsection, we obtained that the contribution of a single critical point $(1,u_e)$ to \eqref{phasestationnaire} is $2^{3-3N}I(q_P)/I(r_P)$, remarking that the term $\prod_e c_e$ cancels and using the fact that $\tilde{F}(1,u_e)=0$. The other pairs $((-1)^{\epsilon_v},u_e)$ differ only by the value of $e^{k\tilde{F}}=\prod\limits_{e:v\to w} (-1)^{(\epsilon_v+\epsilon_w)kc_e}$. Adding the contributions and dividing by 2 (see Proposition \ref{critical2}),  we get:
$$\frac{1}{2}\Big(\sum_{\epsilon_v}\prod_{v:(e_1,e_2,e_3)}(-1)^{\epsilon_vk(c_{e_1}+c_{e_2}+c_{e_3})}\Big)\frac{2^{3-3N}I(q_P)}{I(r_P)}=\frac{2^{2-N}I(q_P)}{I(r_P)}.$$

We note that the contribution of $(-P,-P)$ is the same because $q_{-P}=q_P$ and $r_{-P}=r_P$. We will multiply the result by 2 when summing over $P\in I/\{\pm 1\}$.

{\bf Second case:}
In the case when $P\ne Q$, we obtained in Equation \eqref{contributionPQ} that the contribution of a critical point $(g_v,u_e)$ is 
$$\lim_{\kappa\to 1}\frac{2^{3-3N}i^N(\kappa-1)^{3/2}I(q^{\kappa}_{P,Q})}{I(r_P)\prod_e \sin(\theta_e)}e^{i\sum_e (kc_e+1)\theta_e}.$$
 As before, taking into account all critical points associated to the same pair $(P,Q)$ amounts in multiplying the result by $2^{2N-1}$. 
 
 When replacing $(P,Q)$ by $(-P,-Q)$, we also have $r_{-P}=r_P$. Given a critical point $(g_v,u_e)$ corresponding to $(P,Q)$, one checks directly that the point $(g_v,u_e^{\perp})$ corresponds to $(-P,-Q)$ and that the phase function gets inverted.
 Letting $q^{\kappa}_{P,Q}$ be defined as in Equation \eqref{eqqk'}, one sees that $q^{\kappa}_{-P,-Q}=-q^{\kappa}_{P,Q}$ and since $q^{\kappa}_{P,Q}$ is purely imaginary when $\kappa\to 1$, it holds: $\lim\limits_{\kappa\to 1}(\kappa-1)^{3/2}I(-q^{\kappa}_{P,Q})=\lim\limits_{\kappa\to 1}(\kappa-1)^{3/2}\overline{I(q^{\kappa}_{P,Q})}$. Collecting the contributions of $(P,Q)$ and $(-P,-Q)$, we get the following formula:
 
$$ \frac{2^{3-N}\textrm{Re}\Big(i^{N}\lim\limits_{\kappa\to 1}(\kappa-1)^{3/2}I(q^{\kappa}_{P,Q})e^{i\sum_e (kc_e+1)\theta_e}\Big)}
{I(r_P)\prod_e \sin(\theta_e)}$$

It remains to compute explicitely the Gaussian integrals involved in the preceding computation. Considering on $\bog\times \bog$ the usual Haar measure, one has to divide the result by 2 because of the isotropy subgroup $\{\pm 1\}\in G\times G$.
Using Appendix \ref{gaussian}, we can replace all occurrences of $I$ by a usual $\det$ in the euclidean basis of $\bog$ and its tensor powers. Denoting by $\det'$ the product of the non-zero eigenvalues of a matrix, we have 
\begin{eqnarray*}
I(r_P)&=&(2/\pi)^{1/2}\det(r_P)^{-1/2} \\
I(q_P)&=& (2N)^{3/2} (2/\pi)^{N-1/2}\det\vphantom{}\!'(q_P)^{-1/2}\\
I(q^\kappa_{P,Q},\mu)&=& (2N)^{3/2}(2/\pi)^{N-1/2}\det\vphantom{}\!'(q^\kappa_{P,Q})^{-1/2}
\end{eqnarray*}
where the factors $(2N)^{3/2}$ are due to the fact that the Haar density on the kernels of $q_P$ and of $q_{P,Q}^{\kappa}$ is $\mu_{Haar}=\frac{\mu_{euc}}{(2N)^{3/2}}$ because each vector of the kernel is represented in ${\cal G}^{2N}$ by $2N$-copies of the same vector of $\cal{G}$. 
\appendix
\section{Super-symmetric rules}
We refer to \cite{dm} for a detailed discussion. We collect here some definitions and warnings important in the article.
A super vector space $V$ is a direct sum of two finite dimensional complex vector spaces $V_0$ and $V_1$ called respectively even and odd part. We will always suppose that our spaces are homogeneous that is one of the two components vanishes, in particular any element $x\in V$ has a degree $|x|$ which is 0 or 1 depending on the parity of $V$. Super vector spaces form a category where morphism are linear maps respecting the decomposition. 

The tensor product $V\otimes W$ is defined as the vector space whose even part is $V_0\otimes W_0\oplus V_1\otimes W_1$ and odd part is $V_1\otimes W_0\oplus V_0\otimes W_1$. There is an isomorphism $c_{V,W}:V\otimes W\to W\otimes V$ sending $x\otimes y$ to $(-1)^{|x||y|}y\otimes x$. The even vector space $\C$ is neutral for tensor product.

This allows to define the unordered tensor product of homogeneous super spaces in the following way:
 Given a finite family of vector spaces $(V_i)_{i\in I}$ of parity $p_i$, we set
$$\bigotimes_{i\in I} V_i=\lim_{\longrightarrow} \,\,(\phi_{(\sigma')^{-1}\circ\sigma}:V_{\sigma}\to V_{\sigma'}).$$

The projective system is defined as follows: let $n$ be the cardinality of $I$, for any bijection $\sigma:\{1,\ldots,n\}\to I$, set $V_{\sigma}=\bigotimes_{i=1}^n V_i$. Then, define an isomorphism $\phi_{(\sigma')^{-1}\circ\sigma}:V_{\sigma}\to V_{\sigma'}$ by the formula 
$$\phi_{\tau}(v_1\otimes\cdots\otimes v_n)=(-1)^sv_{\tau_1}\otimes \cdots \otimes  v_{\tau_n}\text{ where }s=\sum\limits_{i<i',\tau_i>\tau_{i'}} p_i p_{i'}.$$

There is an internal functor $\shom$ satisfying the following adjunction formula:
$$\hom(U,\shom(V,W))=\hom(U\otimes V,W)$$
The space $\shom(U,V)$ is the space of all linear maps where a map is considered even if it respects the parity and odd if it reverses it.
In particular, if we set $U^*=\shom(U,\C)$ we have an isomorphism $\shom(U,V)=V\otimes U^*$ and an evaluation map ev$_U:U^*\otimes U\to \C$. 

With these identifications, bilinear forms are elements of $\hom(U\otimes V,\C)=\hom(U,V^*)=V^*\otimes U^*$. The inversion of the terms should be noticed. 

There is another tricky issue: the natural isomorphism $\theta_U:U\to (U^*)^*$ is not the identity but sends $x$ to $(-1)^{|x|}$. With that convention, we have the identity ev$_{U^*}=$ev$_U\circ c_{U,U^*}\circ (\theta_U^{-1}\otimes 1)$. Unformally, this formula is explained by the transposition of the terms in the Gelfand transform: $\theta(x)(\lambda)=(-1)^{|x||\lambda|}\lambda(x)$.

\section{Gaussian integrals and densities}\label{gaussian}
In this subsection let $F$ be a real vector space of dimension $n$, $F^*$ be its dual, $Q$ a quadratic form on $F$ and $S(F)$ the symmetric algebra of $F$. If $Q$ is non-degenerate, one can identify $F$ and $F^*$ and thus transport it to a quadratic form denoted $Q^{-1}$ on $F^*$.  Note that if $F$ is equipped with a basis and $F^*$ with the dual, the matrices expressing $Q$ and $Q^{-1}$ in these bases are inverse to each other. 

\begin{definition}[Densities]
Let ${\cal B}(F)$ be the set of bases of $F$. A $\emph{density}$ $\mu$ on $F$ is a map $\mu:{\cal B}(F)\to \mathbb{R}$ such that for each $A\in GL(F)$ it holds $\mu(A\cdot b)=|\det(A)|\mu(b)$. The set of densities is a real $1$-dimensional vector space denoted $|\Lambda|(F)$.
A \emph{density} on a manifold $M$ is a continuous section of the bundle $|\Lambda|(TM)$. 
\end{definition}
  
Fixing $b\in {\cal B}(F)$, defines an isomorphism $F\simeq\mathbb{R}^n$ and thus $F$ can be equipped with a density which we denote $\mu_{euc}$ which satisfies $\mu_{euc}(b)=1$.
If $F$ is equipped with a density $\mu$ and $Q:F\to \mathbb{C}$ is a quadratic form, then one defines $\det(Q,\mu)\in \mathbb{C}$ as $\det(Q(b))/\mu(b)^{2}$ where $Q(b)$ is the matrix expressing $Q$ in the basis $b$ of $F$ (this clearly does not depend on $b$).
\begin{lemma}[Gaussian Integrals]
Given a density $\mu$ on $F$ and a quadratic form $Q$ such that $Re(Q)>0$, then \begin{equation}\label{gauss}
I(Q)=\int_F \exp(-\frac{Q}{2})d\mu=\frac{(2\pi)^{n/2}}{\sqrt{\det(Q,\mu)}}\end{equation} 
where the square root is the analytical extension of the positive one on the set of real and positive quadratic forms.
\end{lemma}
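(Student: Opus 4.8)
The plan is to reduce the statement to the standard Lebesgue measure, to verify the formula in the real positive-definite case by orthogonal diagonalization, and then to extend to all $Q$ with $\mathrm{Re}(Q)>0$ by analytic continuation.

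First I would fix a basis $b$ of $F$, identifying $F\simeq\R^n$ and transporting $\mu$ to $\mu(b)\,\mu_{euc}$. I would then check that both sides of \eqref{gauss} scale consistently under a change of basis $A\in GL(F)$: the left-hand side picks up a factor $|\det A|$ from the change-of-variables formula, while on the right-hand side $\det(Q,\mu)=\det(Q(b))/\mu(b)^2$ transforms by $|\det A|^{-2}$, so that $\sqrt{\det(Q,\mu)}$ absorbs the same factor and the equality is basis-independent. This reduces everything to the case $\mu=\mu_{euc}$, where $\det(Q,\mu_{euc})$ is the ordinary determinant of the symmetric matrix representing $Q$.

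Next I would establish the formula for real positive-definite $Q$. Diagonalizing the symmetric matrix of $Q$ by an orthogonal transformation, which preserves $\mu_{euc}$, splits the integral into a product of one-dimensional Gaussians $\int_\R \exp(-\lambda x^2/2)\,\dd x=\sqrt{2\pi/\lambda}$ taken over the positive eigenvalues $\lambda$ of $Q$. Their product is exactly $(2\pi)^{n/2}/\sqrt{\det Q}$ with the positive square root, which anchors the branch convention stated in the lemma.

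The delicate step, which I expect to be the main obstacle, is the analytic continuation to complex $Q$ with $\mathrm{Re}(Q)>0$. The set $\Omega$ of complex symmetric matrices whose real part is positive-definite is a convex tube domain, hence connected and simply connected, so $\det$ admits a single-valued holomorphic square root on $\Omega$ once its value is fixed on the real positive-definite locus to be the positive root. The hypothesis $\mathrm{Re}(Q)>0$ guarantees Gaussian decay of the integrand, and dominated convergence uniform on compact subsets of $\Omega$ would show that $Q\mapsto I(Q)$ is holomorphic there, justifying differentiation under the integral sign. Both $I(Q)$ and $(2\pi)^{n/2}/\sqrt{\det(Q,\mu_{euc})}$ are then holomorphic on $\Omega$ and, by the real case, agree on the totally real positive-definite locus, which is a uniqueness set for holomorphic functions on the connected domain $\Omega$; the identity theorem therefore forces them to coincide on all of $\Omega$. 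The careful point is precisely the branch bookkeeping: one must confirm that the holomorphic square root selected by continuation really is the analytic continuation of the positive root along paths in $\Omega$, a fact that follows cleanly from the convexity, hence simple connectedness, of $\Omega$.
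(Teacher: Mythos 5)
Your proof is correct, but note that the paper does not actually prove this lemma: it is stated in Appendix \ref{gaussian} as classical background (essentially H\"ormander's Theorem 7.6.1 territory), with the branch convention simply declared in the statement. So there is no paper argument to compare against; your write-up supplies the standard proof, and it does so in the standard way: reduction to $\mu_{euc}$ via the basis-invariance of $\det(Q,\mu)=\det(Q(b))/\mu(b)^2$, orthogonal diagonalization in the real positive-definite case, and the identity theorem on the tube domain $\Omega$ of complex symmetric matrices with positive-definite real part, whose convexity pins down the branch of the square root exactly as the lemma requires. One small point you should make explicit rather than leave implicit: the existence of a single-valued holomorphic square root of $\det$ on $\Omega$ requires knowing that $\det Q\ne 0$ throughout $\Omega$, not just simple connectedness. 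This is a one-line check, but it uses the symmetry of $Q$ in an essential way: if $Qv=0$ with $v=x+iy\ne 0$, then $\overline{v}^{\,t}Qv=x^tQx+y^tQy$ (the cross terms cancel because $x^tQy=y^tQx$), whose real part is $x^t\mathrm{Re}(Q)x+y^t\mathrm{Re}(Q)y>0$, a contradiction; the analogous statement fails for general complex matrices with positive-definite Hermitian part replaced by other notions, so the symmetric structure matters. With that inserted, and with the uniform bound $\mathrm{Re}(Q)(x)\ge c|x|^2$ on compact subsets of $\Omega$ justifying holomorphy of $I(Q)$ as you indicate, the argument is complete.
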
 
An element $P$ of $S(V^*)$ may be interpreted either as a polynomial function on $V$ or as a differential operator on $C^{\infty}(V^*;\mathbb{C})$. To distinguish the two cases we shall denote $P^{op}$ the element $P$ interpreted as a differential operator. 
The following is an ubiquitous generalization of the Gaussian integration formula \eqref{gauss}:
\begin{proposition}[Fourier transforms of Gaussian functions]\label{fourier}
\begin{equation}\label{gauss2}
\int_{V}P(x)\exp(-\frac{1}{2}Q(x))\dd\mu=\frac{(2\pi)^{n/2}}{\sqrt{\det(Q,\mu)}}\big(P^{op}\exp(\frac{1}{2}Q^{-1})\big)|_0
\end{equation}
\end{proposition}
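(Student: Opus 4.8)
The plan is to deduce \eqref{gauss2} from the plain Gaussian integral \eqref{gauss} by the standard source (generating function) trick: inserting the polynomial $P$ into the integrand is turned into differentiation with respect to an auxiliary covector, so that the whole statement follows from the scalar case already established in \eqref{gauss}. Since both sides of \eqref{gauss2} are linear in $P$, it would in principle suffice to treat monomials, but the source trick handles all $P$ at once and keeps everything basis-free.

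First I would introduce, for $\xi\in V^*$, the source-deformed integral
$$J(\xi)=\int_V \exp\big(-\tfrac12 Q(x)+\la\xi,x\ra\big)\dd\mu.$$
Completing the square gives $-\tfrac12 Q(x)+\la\xi,x\ra=-\tfrac12 Q(x-x_\xi)+\tfrac12 Q^{-1}(\xi)$, where $x_\xi\in V$ is the image of $\xi$ under the isomorphism $V^*\simeq V$ induced by $Q$ (which exists because $\mathrm{Re}(Q)>0$ forces $Q$ to be non-degenerate) and $Q^{-1}$ is the resulting inverse quadratic form on $V^*$; one checks directly that $Q(x_\xi)=\la\xi,x_\xi\ra=Q^{-1}(\xi)$. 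Since the density $\mu$ is translation invariant, the substitution $x\mapsto x-x_\xi$ reduces $J(\xi)$ to the integral computed in \eqref{gauss}, yielding $J(\xi)=\frac{(2\pi)^{n/2}}{\sqrt{\det(Q,\mu)}}\exp(\tfrac12 Q^{-1}(\xi))$.

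Next I would promote the factor $P(x)$ to the differential operator $P^{op}$ acting on the source. In any pair of dual bases one has $\frac{\partial}{\partial\xi_i}e^{\la\xi,x\ra}=x_i\,e^{\la\xi,x\ra}$, whence $P(x)\,e^{\la\xi,x\ra}=P^{op}e^{\la\xi,x\ra}$, where $P^{op}$ differentiates in the variable $\xi$. Applying this inside the integral gives
$$\int_V P(x)\exp\big(-\tfrac12 Q(x)+\la\xi,x\ra\big)\dd\mu=P^{op}J(\xi)=\frac{(2\pi)^{n/2}}{\sqrt{\det(Q,\mu)}}\,P^{op}\exp\!\big(\tfrac12 Q^{-1}\big)(\xi),$$
and evaluating at $\xi=0$ produces exactly \eqref{gauss2}.

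The one step needing care, and the main obstacle, is the interchange of $P^{op}$ with the integral, that is, differentiation under the integral sign. This is where $\mathrm{Re}(Q)>0$ is used: writing $\mathrm{Re}(Q)(x)\ge c|x|^2$, the exponent has real part $\le -\tfrac{c}{2}|x|^2+|\xi|\,|x|$, so for $\xi$ in a fixed neighborhood of $0$ the integrand together with all its finitely many $\xi$-derivatives is dominated by a single integrable Gaussian times a polynomial, and the derivatives may be pulled inside. As in \eqref{gauss}, I would first establish the identity for real positive-definite $Q$, where the manipulations are genuine Lebesgue integrations, and then note that both sides are holomorphic in the entries of $Q$ on the domain $\mathrm{Re}(Q)>0$, so the formula extends by analytic continuation.
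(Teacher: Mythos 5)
Your proof is correct. Note that the paper itself offers no proof of this proposition: it is stated in Appendix B as ``an ubiquitous generalization'' of the scalar Gaussian formula \eqref{gauss}, so there is no argument to compare yours against; your source-trick derivation is the canonical one, and it is the proof the authors implicitly have in mind. You also handle correctly the two points a careless write-up would fumble: the substitution $x\mapsto x-x_\xi$ is a genuine translation only when $Q$ is real positive-definite (for complex $Q$ the shift vector $x_\xi$ is complex, so you rightly prove the identity first in the real case and then extend by noting that both sides are holomorphic in the entries of $Q$ on the convex, hence connected, domain $\mathrm{Re}(Q)>0$ and agree on the totally real set of real positive-definite forms --- the same continuation device the paper uses elsewhere, e.g.\ to pass from $\mathrm{SL}_2(\R)$ to $\mathrm{SL}_2(\C)$ holonomies); and the interchange of $P^{op}$ with the integral is justified by the uniform Gaussian domination $\mathrm{Re}\,Q(x)\ge c|x|^2$ for $\xi$ near $0$. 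One small remark: since $\mu$ is a density on the vector space $V$ (a constant multiple of Lebesgue measure), its translation invariance is automatic and needs no further comment, so that step of your argument is as clean as it looks.
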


If $K\subset F$ is a $k$-dimensional subspace one has $|\Lambda(F)|=|\Lambda(K)|\otimes_{\mathbb{R}}|\Lambda(F/K)|$: indeed given bases $b_K$ and $b_{F/K}$ for $K$ and $F/K$ one can construct easily a basis of $b$ of $F$ using the inclusion of $K$ and choosing an arbitrary complement to $K$ (the choice of the complement affects $b$ only up to elements of $SL(F)$).  Thus given densities $\mu$ and $\mu_F$ on $F$ and $K$ respectively, the \emph{quotient} density $\mu_{F/K}$ on $F/K$ is defined so that $\mu=\mu_K\otimes \mu_{F/K}$.

Suppose that the quadratic form $Q$ is degenerate. Then denoting by $K$ its kernel, we can apply the formula \eqref{gauss} to the reduced quadratic form $\ba{Q}$ on $F/K$. To do that, we need to fix a density $\mu_K$ on $K$. Then, we can set 
$I(Q,\mu_{K})=\int_{F/K} \exp(-\frac{1}{2}\ba{Q}(x)) \dd \mu_{F/K}(x)$.

This new integral can be computed without considering the quotient by the following perturbative argument. Let $Q'$ be a quadratic form on $F$ with positive real part and such that it is non-degenerate on $K$ . Then, for $\epsilon$ positive and small enough, the quadratic form $Q+\epsilon Q'$ is non-degenerate on $K$ and on $F$. Moreover we have

$$I(Q,\mu_{K})=\lim_{\epsilon\to 0} \frac{I(Q+\epsilon Q')}{I(Q'|_K)}.$$
As a consequence , if $\alpha\in \R$ is positive, it holds $I(\alpha Q,\mu_k)=\alpha^{\frac{k-n}{2}}I(Q,\mu_k)$. 
Suppose now that $F=F_1\oplus F_2$, that $Q$ is non-degenerate on $F_1$ and that both $F$ and $F_1$ are equipped with densities $\mu$ and $\mu_{1}$ (consequently $F_2$ inherits a density $\mu_2$). Let $Q_1$ be the restriction of $Q$ to $F_1$ and let $A:F_2\to F_1$ be defined by $Q(x,y)=Q_1(x,Ay),\forall y\in F_2, \forall x\in F_1$. Then, setting $Q'(y)=Q(y,y)-Q(Ay,Ay)$ it holds:
\begin{equation}
I(Q)=I(Q_{1})I(Q') 
 \end{equation} 
 
Given a manifold $M$ equipped with a density $\mu$ one defines $\int_M f d\mu\in \mathbb{C},\ \forall f\in C^{\infty}(M)$ in the natural way. In particular, a compact Lie group $G$ can be equipped by the $G$-invariant density $\mu_{H}$ defined by $\int_{G} 1d\mu_H=1$.
If a $G$ acts freely on a manifold $M$ equipped with a density $\mu$ preserved by $G$, one defines naturally a density $\mu_{M/G}$ on the quotient. 
\begin{example}[The euclidean density on $\cal{G}$]
On $\cal{G}$ equipped with the scalar product $|\xi|^2=-\frac{1}{2}\tr(\xi^2)$, let $\mu_{euc}$ be the density whose value on a orthonormal basis is $1$. Then $\mu_{euc}$ coincides with the density induced by the identification of $\cal{G}$ with the tangent space to $S^3\subset \C^2$ at a point and the Haar density is $\mu_H=\frac{\mu_{euc}}{2\pi^2}$. 
\end{example}
\begin{example}[The Hopf fibration]\label{quotientdensity}
Let $S^{3}=$SU(2) be the unit sphere in $\mathbb{C}^2$ and let $\P^1$ be the quotient by the diagonal action of $S^{1}$. Let $\mu^{\rm SU(2)}_{H}, \mu^{S^1}_{H}$ and $\mu^{\P^1}_{H}$ be the Haar densities on SU(2) and the quotient density on $\P^1$ respectively. Let also $\mu^{\rm SU(2)}_{euc},\mu^{\P^1}_{euc}$ be the euclidean densities. Then $\mu^{\rm SU(2)}_{H}=\frac{1}{2\pi^2}\mu^{\rm SU(2)}_{euc}$ and $\mu^{\P^1}_{H}=\frac{1}{\pi}\mu^{\P^1}_{euc}$.
\end{example}

\section{Around the proof of Theorem \ref{teo:abelianwestbury}}\label{sub:determinants}
The following are general well known facts which we shall apply to interpret topologically some of the determinants we will be dealing with.
 \subsection{Combinatorial interpretation of determinants}
Let $G^{or}$ be a graph whose vertices $v_1,\ldots v_n$ are connected by oriented edges $e_{ij}:v_i\to v_j$ whose weights are the entries $m_{ij}$ of a $n\times n$ matrix $M$ (the diagonal terms correspond then to loops).
The following is a standard well-known fact:
\begin{lemma}
$\det(M)=(-1)^n\sum_{c} (-1)^{\# c}w(c)$ where $c$ runs over all the oriented curves embedded in $G^{or}$ and touching each vertex exactly once, $w(c)$ is the product of the weights $m_{ij}$ of the oriented edges in $c$ and $\# c$ is the number of connected components of $c$.
\end{lemma}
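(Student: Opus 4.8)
The plan is to expand $\det(M)$ by the Leibniz formula and reorganize the sum according to the cycle structure of permutations. Write $\det(M)=\sum_{\sigma\in\mathfrak{S}_n}\mathrm{sgn}(\sigma)\prod_{i=1}^n m_{i,\sigma(i)}$. The first observation is that each permutation $\sigma$ encodes exactly an oriented curve $c$ embedded in $G^{or}$ touching every vertex once: for each $i$ select the oriented edge $e_{i,\sigma(i)}:v_i\to v_{\sigma(i)}$, so that each vertex acquires precisely one outgoing and one incoming edge, and the connected components of the resulting curve are exactly the cycles of $\sigma$ (a fixed point $\sigma(i)=i$ corresponds to a loop $e_{ii}$ carrying the diagonal weight $m_{ii}$). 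This sets up a bijection between $\mathfrak{S}_n$ and the set of such curves, under which $\prod_i m_{i,\sigma(i)}$ is by definition the weight $w(c)$.

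The second step is the sign computation. If $\sigma$ decomposes into $k=\#c$ disjoint cycles of lengths $\ell_1,\dots,\ell_k$ with $\sum_j\ell_j=n$, then $\mathrm{sgn}(\sigma)=\prod_{j=1}^k(-1)^{\ell_j-1}=(-1)^{\sum_j(\ell_j-1)}=(-1)^{n-k}$. Since $(-1)^{n-k}=(-1)^n(-1)^{\#c}$, substituting into the Leibniz expansion and grouping the terms by their associated curve yields $\det(M)=(-1)^n\sum_c(-1)^{\#c}w(c)$, as claimed.

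I would expect the only care needed to lie in the combinatorial bookkeeping: checking that $\sigma\mapsto c$ is genuinely a bijection onto the curves touching each vertex exactly once (every such cycle cover arises from a unique permutation, with fixed points accounted for by the loops), and that the cycle-length sign rule is applied with the convention that a single $\ell$-cycle has sign $(-1)^{\ell-1}$. None of this is deep; the statement is essentially a repackaging of the Leibniz formula, so there is no real obstacle beyond making the dictionary between permutations and embedded oriented curves precise.
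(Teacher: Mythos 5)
Your proof is correct: the paper states this lemma without proof, as a standard well-known fact, and your argument (Leibniz expansion, the bijection between permutations and cycle covers of $G^{or}$ with fixed points given by the loops $e_{ii}$, and the sign identity $\mathrm{sgn}(\sigma)=(-1)^{n-\#c}$) is precisely the standard argument being invoked. Nothing is missing; the bookkeeping points you flag --- that every cycle cover arises from a unique permutation and that an $\ell$-cycle has sign $(-1)^{\ell-1}$ --- are exactly the right ones to check.
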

Let now $G$ be the graph whose vertices $v_1,\ldots v_n$ are connected by exactly one unoriented edge $e_{i,j}$ and suppose that $M$ is a matrix such that $m_{ii}=0\  \forall i$. Given a connected oriented curve $c$ embedded in $G$ connecting vertices $v_{i_1}\to v_{i_2}\to \cdots v_{i_k}\to v_{i_1}$ let $w(c)=m_{i_1,i_2}\cdots m_{i_k,i_1}$ and if $c$ is a disconnected union of disjoint oriented curves $c_1,\ldots c_k$, let $w(c)=\prod_{i} w(c_i)$. Also let a \emph{dimer} be disjoint union of edges $d=e_{i_i,j_1}\sqcup \cdots \sqcup e_{i_k,j_k}$, and let $w(d)=m_{i_1,j_1}m_{j_1,i_1}\cdots m_{i_k,j_k}m_{j_1,i_k}$ and $\# d=k$. By a {\it configuration of curves and dimers} we will from now on mean a disjoint union $c\cup d$ of dimers and oriented curves embedded in $G$ such that each vertex is contained exactly in one component of $c\cup d$; its weight will be $w(c\cup d)=w(c)w(d)$; similarly a {\it configuration of dimers} will be a configuration of curves and dimers containing no curves. We shall denote Conf($G$) the set of configurations of curves and dimers on $G$ and DConf($G$) the set of dimer configurations. 
Then the following holds: 
\begin{corollary}\label{cor:determinants}
$\det(M)=(-1)^n\sum_{c\cup d\in {\rm Conf}(G)} (-1)^{\# c+\# d}w(c)w(d)$
\end{corollary}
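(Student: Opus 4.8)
The plan is to deduce the Corollary directly from the preceding Lemma by applying it to the digraph attached to $G$. First I would form $G^{or}$ by replacing each unoriented edge $e_{i,j}$ of $G$ by the two opposite arcs $v_i\to v_j$ and $v_j\to v_i$ carrying the weights $m_{ij}$ and $m_{ji}$ respectively; since $m_{ii}=0$ there are no loops. The Lemma then gives $\det(M)=(-1)^n\sum_c(-1)^{\#c}w(c)$, where the sum runs over all ways of covering every vertex exactly once by vertex-disjoint oriented cycles (equivalently over all permutations $\sigma$, the components of $c$ being the cycles of $\sigma$, fixed points being loops). Because $m_{ii}=0$, every term containing a fixed point vanishes, so only covers all of whose components have length $\geq 2$ survive.

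The key observation is that each surviving cover splits canonically into its cycles of length $\geq 3$ and its $2$-cycles, and that this is exactly the decomposition of a configuration in $\mathrm{Conf}(G)$ into its oriented curves and its dimer. Indeed a cycle $v_{i_1}\to\cdots\to v_{i_k}\to v_{i_1}$ of length $k\geq 3$ is precisely an oriented curve of the Corollary with weight $m_{i_1 i_2}\cdots m_{i_k i_1}$, and its two orientations yield two genuinely distinct covers, matching the fact that curves are counted with orientation. A $2$-cycle on $\{i,j\}$ is the single transposition $v_i\to v_j\to v_i$, it equals its own reverse, and it carries weight $m_{ij}m_{ji}$; I would identify it with the dimer edge $e_{i,j}$, whose weight is by definition $m_{ij}m_{ji}$. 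Collecting all $2$-cycles of a given cover into one dimer $d$ and all longer cycles into a family of curves $c$ thus produces a weight-preserving bijection between the surviving covers and the elements $c\cup d$ of $\mathrm{Conf}(G)$, with $w(c_{\mathrm{Lemma}})=w(c)\,w(d)$.

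Finally I would match the signs. If a cover has $p$ cycles of length $\geq 3$ and $q$ two-cycles, then its number of components is $\#c_{\mathrm{Lemma}}=p+q$, whereas the associated configuration has $\#c=p$ curves and $\#d=q$ dimer edges, so $(-1)^{\#c_{\mathrm{Lemma}}}=(-1)^{\#c+\#d}$. Substituting into the Lemma gives exactly $\det(M)=(-1)^n\sum_{c\cup d\in\mathrm{Conf}(G)}(-1)^{\#c+\#d}w(c)w(d)$.

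I expect the only real subtlety, rather than a genuine obstacle, to be the asymmetric treatment of $2$-cycles versus longer cycles: a $2$-cycle must be counted once, since it coincides with its reverse and becomes a single dimer edge, whereas a cycle of length $\geq 3$ must be counted with both orientations, since these are distinct permutations and distinct oriented curves. Keeping this distinction straight is precisely what makes the component counts, and hence the signs $(-1)^{\#c+\#d}$, come out correctly.
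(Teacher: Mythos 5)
Your proof is correct and follows exactly the route the paper intends: the Corollary is stated as an immediate consequence of the Lemma applied to the digraph obtained by doubling each unoriented edge of $G$ into two opposite arcs, with $2$-cycles identified with dimer edges (weight $m_{ij}m_{ji}$, counted once) and cycles of length $\geq 3$ with oriented curves (counted with both orientations), so that $(-1)^{\#c_{\mathrm{Lemma}}}=(-1)^{\#c+\#d}$. Your attention to the asymmetry between $2$-cycles and longer cycles is precisely the one subtlety in the argument, and you resolve it correctly.
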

Finally if $M^t=-M$ then $\det(M)=\pf(M)^2$ and $\pf(M)$ can be interpreted as counting the dimer configurations in $G$ (see \cite{Kas}):
\begin{theorem}\label{teo:dimers}
$Pf(M)=\sum_{d\in {\rm DConf}(G)} \pm \sqrt{|w(d)|}$
\end{theorem}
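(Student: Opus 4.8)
The plan is to prove the identity directly from the permutation definition of the Pfaffian, rather than by extracting a square root from Corollary \ref{cor:determinants}. Recall that for an antisymmetric matrix $M$ of even size $n=2m$ (if $n$ is odd both sides vanish, since $\pf(M)=0$ and no family of disjoint edges can cover an odd number of vertices), the Pfaffian can be written as a sum over the pairings $\pi$ of $\{1,\dots,n\}$:
$$\pf(M)=\sum_{\pi}\mathrm{sgn}(\pi)\prod_{\{i,j\}\in\pi,\ i<j} m_{ij},$$
where $\mathrm{sgn}(\pi)$ is the sign of any permutation listing the pairs of $\pi$ consecutively. I would take this as the working definition; it agrees with the normalized formula $\frac{1}{2^m m!}\sum_{\sigma\in\mathfrak{S}_n}\mathrm{sgn}(\sigma)\prod_{i=1}^m m_{\sigma(2i-1)\sigma(2i)}$ after collapsing the $2^m m!$ orderings of each matching, which all give the same term.

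The first step is to observe that the pairings of the index set $\{1,\dots,n\}$ are exactly the dimer configurations $d\in{\rm DConf}(G)$: a perfect matching of $\{1,\dots,n\}$ is the same datum as a disjoint union of edges $e_{i,j}$ of the complete graph $G$ covering every vertex once. Under this bijection I compare the monomial attached to $\pi$ with the weight of the corresponding dimer. Since $M^t=-M$ gives $m_{ji}=-m_{ij}$, for $d=e_{i_1,j_1}\sqcup\cdots\sqcup e_{i_m,j_m}$ one gets
$$w(d)=\prod_{l=1}^m m_{i_lj_l}m_{j_li_l}=(-1)^m\prod_{l=1}^m m_{i_lj_l}^2,$$
so that $\sqrt{|w(d)|}=\prod_{l=1}^m |m_{i_lj_l}|$. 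This is precisely the absolute value of the associated Pfaffian term $\mathrm{sgn}(\pi)\prod_l m_{i_lj_l}$; hence each dimer configuration contributes $\pm\sqrt{|w(d)|}$, the sign being $\mathrm{sgn}(\pi)$ times the sign of $\prod_l m_{i_lj_l}$. Summing over all $\pi$ yields the stated formula.

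There is essentially no obstacle at the level of the claim as phrased, precisely because it asserts the signs only up to $\pm$: the content reduces to the identification of the Pfaffian's terms with dimers together with the elementary magnitude computation above. The genuinely delicate point, which the statement deliberately sidesteps, is whether one can fix an orientation of $G$ (a Kasteleyn, or Pfaffian, orientation) making all the signs $\mathrm{sgn}(\pi)$ equal, so that the signed sum degenerates into an unsigned one; this is possible only under planarity hypotheses and is the true substance of \cite{Kas}. For our purposes it suffices to record, as a consistency check, that squaring the formula and using the standard bijection sending an ordered pair of matchings $(d,d')$ to their superposition -- a disjoint union of doubled edges and even cycles -- recovers exactly the curve-and-dimer expansion of $\det(M)=\pf(M)^2$ furnished by Corollary \ref{cor:determinants}.
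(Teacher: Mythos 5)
Your proof is correct, and it is in fact more self-contained than the paper's treatment: the paper does not prove Theorem \ref{teo:dimers} at all, but simply cites Kasteleyn \cite{Kas}, remarking that ``the choice of the signs is in general a delicate matter.'' Your route --- expanding $\pf(M)$ over perfect matchings, identifying matchings of $\{1,\dots,n\}$ with elements of ${\rm DConf}(G)$, and using $m_{ji}=-m_{ij}$ to compute $w(d)=(-1)^m\prod_l m_{i_lj_l}^2$, hence $\sqrt{|w(d)|}=\prod_l |m_{i_lj_l}|$ --- makes transparent that the statement, phrased with undetermined signs, is essentially definitional, and you correctly isolate the genuine content of \cite{Kas} (existence of a Pfaffian orientation making the signs coherent) as something the statement deliberately sidesteps. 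This diagnosis matches how the paper actually uses the theorem: in Subsection \ref{sub:recoverwestbury} the signs $s(\gamma)$ are pinned down not by any orientation argument but by squaring against $\det(W^1)=\pf(W^1)^2$ and inducting on the number of components, which is exactly the kind of case-by-case determination the paper promises when it says ``in our specific cases it will be quite easy to determine it.'' Two minor points of hygiene, neither of which is a gap: $\mathrm{sgn}(\pi)$ as ``the sign of any permutation listing the pairs consecutively'' is well defined only after fixing $i<j$ within each pair (or by observing, as your normalized formula implicitly does, that flipping a pair flips both the sign and the entry, leaving each term invariant); and in the intended application the entries of $M=W^1$ are monomials in $X_\alpha$ and $t_h$ rather than real numbers, so $|\cdot|$ and $\pm$ must be read coefficientwise --- your argument goes through verbatim there, since each Pfaffian term is $\pm$ a monomial whose square is $\pm\, w(d)$. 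Your closing consistency check via the superposition of two matchings into doubled edges and even cycles is also sound, and it is pleasant in that it ties the theorem back to Corollary \ref{cor:determinants} in the direction opposite to the one a reader might expect (squaring the Pfaffian rather than extracting a square root from the determinant, the latter being ill-defined sign-wise).
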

In the above theorem, the square root is due only to our definition of $w(d)$ while the choice of the signs is in general a delicate matter (see \cite{Kas}); in our specific cases it will be quite easy to determine it.

\subsection{Proof of Theorem \ref{teo:abelianwestbury}}
Let $\Gamma$ be a planar spin network equipped with a holonomy with values in the diagonal matrices of SL$_2\C$ and let us use the notation introduced in Subsection \ref{sub:abeliancase}. Theorem \ref{teo:abelianwestbury} descends directly from Corollary \ref{cor:determinants} and Proposition \ref{prop:countingcurves}: here below the details.

Let $\Gamma'$ be the graph obtained from $\Gamma$ by blowing up the vertices; each curve $\gamma\subset \Gamma$ can be lifted in a natural way to one in $\Gamma'$ (which we will keep calling $\gamma$) and the connection $\psi$ may be lifted to $\psi$ on $\Gamma'$ so that the holonomy on a curve and its lift coincide (see Figure \ref{fig:blowupholonomy}).
\begin{figure}
\centering
  \def\svgwidth{\columnwidth}
 \executeiffilenewer{holonomies.svg}{holonomies.pdf}%
 {inkscape -z -D --file=holonomies.svg %
 --export-pdf=holonomies.pdf --export-latex}%
 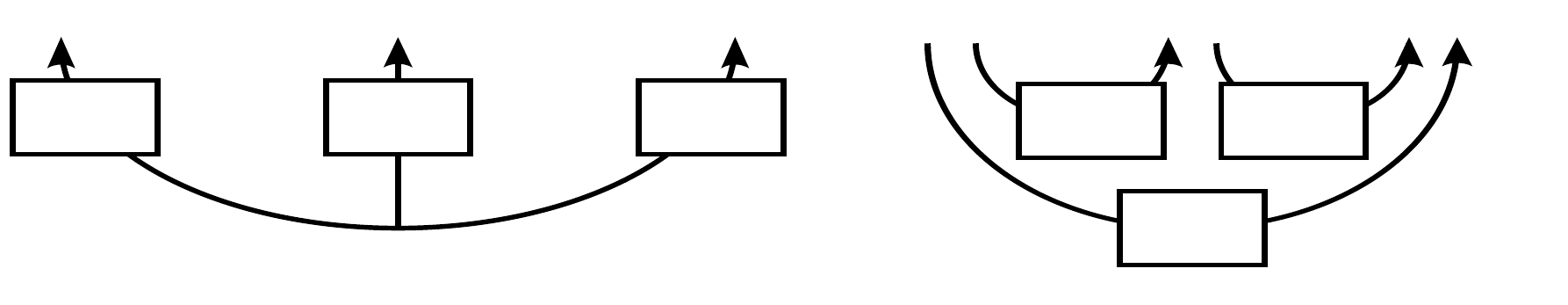%

  \caption{Blowing up with holonomies\label{fig:blowupholonomy}}
\end{figure}

For any half-edge $h$ of $\Gamma$, the space $F_h$ has a standard basis whose corresponding coordinates are $z_h,w_h$. We will say that a basis element has \emph{type} $z$ or $w$. Denote by $W$ the matrix of $\frac{1}{i}(P+Q_{\psi})$ in this basis. Remark that, since $\psi$ is diagonal, the coefficient $W_{i,j}$ does not vanish only if $i,j$ correspond to adjacent vertices of $\Gamma'$ with distinct types.
Recall that the set of half-edges is ordered and put first the type $z$ basis, then the type $w$ basis. Then $W$ is then antidiagonal by blocks. Denote by $W^1,W^2$ the matrices indexed by $H$ defined respectively by $W^1_{g,h}=W_{z_g,w_h}$ and $W^2_{g,h}=W_{w_g,z_h}$. We have  $\det(W)=\det(W^1)\det(W^2)$. 

Using the presentation of $\Gamma'$ in $\mathbb{R}^2$ induced by that of $\Gamma$ (Figure \ref{graphe}) we see that the edges corresponding to adjacent angles in $\Gamma$ form an angle of $0$ degrees (so a cusp) pointing in the upwards direction in $\Gamma'$ (see Figure \ref{fig:blowupholonomy}); we will call such edges \emph{internal} and the other ones \emph{external}.

\begin{remark}\label{rem:coeffs}
If $e$ is an edge of $\Gamma'$, let $l_e,r_e$ be the half-edges respectively at the left and right endpoint of $e$.  
If $e$ is an external edge of $\Gamma'$ oriented from left to right, then the entry of $W_1$ corresponding to $e$ is $1$ and that corresponding to $e$ equipped with the opposite orientation is $-1$: indeed both entries come from the matrix expressing $-iP$. Similarly, if $\alpha$ is an internal edge of $\Gamma'$ oriented from left to right, then the entry of $W_1$ corresponding to $\alpha$ is $(t_{l_\alpha}^{-1}t_{r_\alpha})X_{\alpha}$ and that corresponding to the opposite orientation is $-(t_{l_\alpha}t_{r_\alpha}^{-1})X_{\alpha}$: both entries come from the matrix expressing $-iQ_\psi$. 
\end{remark}

Let us denote $\ba{t_h}=t_h^{-1}$. This extends to an automorphism of the ring of coefficients. We check that $W^2=-\ba{W^1}=(W^1)^t$ hence, $\det(W^2)=\ba{\det(W^1)}=\det(W^1)$ because there is an even number of half-edges, so, by Theorem \ref{thm1}, to prove Theorem \ref{teo:abelianwestbury} it is sufficient to interpret $\det(W^1)$ in terms of traces of curves. 
To this purpose, remark that $W^1$ has $0$ on the diagonal and we can apply Corollary \ref{cor:determinants} with $G=\Gamma'$ and $M=W^1$.

Now remark that if a vertex $v$ of $\Gamma'$ is contained in a curve $c$ embedded in $\Gamma'$ then either $c$ contains an internal  and an external edge containing $v$ or it contains two internal edges containing $v$ : we will call the latter kind of vertices ``cusps" of $c$; also a dimer covering an internal edge will be called {\it internal}.

\begin{proposition}\label{prop:abeliancase}
Let $c\cup d$ be a configuration of curves and dimers containing each vertex of $\Gamma'$ exactly once. Then:
\begin{enumerate}
\item There exists a unique set $L$ of disjoint arcs in $\Gamma'\setminus c$, whose boundary vertices coincide with the set of cusps of $c$.
\item Let $d'=d\setminus L$: the set of internal dimers of $d'$ can be joined by external edges of $\Gamma'$ to form a unique embedded (possibly disconnected) curve $c'\subset \Gamma'\setminus (c\cup L)$.
\end{enumerate}
\end{proposition}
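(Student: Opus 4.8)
The plan is to reduce both assertions to a local analysis at each blown-up vertex of $\Gamma'$ and then to glue the local pictures along external edges. Recall that $\Gamma'$ is trivalent, each of its vertices carrying exactly one external edge and two internal edges, the latter being the two sides of an upward cusp. The basic local observation is that at a \emph{cusp} of $c$, i.e.\ a vertex at which $c$ uses both of its internal edges, the vertex lies on $c$ and therefore on no dimer, while $c$ has already spent both its passages on the two internal edges; hence the external edge at a cusp is used neither by $c$ nor by $d$. In particular, in $\Gamma'\setminus c$ every cusp has degree exactly $1$, its unique surviving edge being this unused external edge. I would then enumerate how a configuration can occupy a single triangle according to the number $0,1,2,3$ of its internal edges lying on $c$; this shows that cusps occur only when a triangle carries two or three of its internal edges in $c$, and it determines in each case precisely which edges of the triangle are unused, which are dimers, and which belong to $c$.

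For Part 1 I would construct the arcs of $L$ by a deterministic path-following procedure. Starting at a cusp, the only edge available in $\Gamma'\setminus c$ is its unused external edge; following it one reaches a vertex which, by the local enumeration, is either another cusp — and the arc is a single external edge joining two cusps — or a vertex covered by an internal dimer. At the latter, the procedure continues by alternating unused external edges and internal dimer edges, and I claim that the cyclic planar ordering of the three edges at each such vertex determines the outgoing edge uniquely from the incoming one. Granting this, the process is forced at every step, each external and each internal dimer edge can be entered at most once, so the traced arcs are pairwise disjoint; and since the only degree-$1$ ends of this process are cusps, each arc must terminate at a cusp. This yields existence of $L$, and uniqueness is automatic because no choice was ever made.

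For Part 2 I would feed the internal dimers of $d'=d\setminus L$ that were not consumed by the arcs into the \emph{same} transition rule. Joining each such internal dimer to the next along the forced unused external edges, determinism shows that these segments can only close up into embedded curves, disjoint from $c$ and from $L$, which is the desired $c'$, and that this closing-up is unique. I expect the main obstacle to be precisely the verification that the transition rule is genuinely forced: at a vertex covered by an internal dimer one must show that the planar embedding singles out the dimer edge (rather than the spare unused internal edge) as the continuation, and one must rule out that the path starting from a cusp meets $c$, stalls at a non-cusp degree-$1$ vertex (one where $c$ uses one internal and one external edge), or runs into a cycle before reaching a cusp. Checking this deterministic consistency uniformly across the local cases is what makes the pairing of the cusps and the curve $c'$ simultaneously well defined and unique.
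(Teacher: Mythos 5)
Your proposal follows the same skeleton as the paper's proof (a forced walk started at each cusp, alternating unused external edges with internal dimers, and the same inspection for part 2), but the step you yourself flag as the main obstacle is a genuine gap, and the mechanism you propose for closing it cannot work. The planar cyclic ordering does \emph{not} determine the continuation at a vertex $v$ covered by an internal dimer: entering $v$ along its external edge, \emph{both} internal edges at $v$ survive in $\Gamma'\setminus c$ (since $v\notin c$, no edge of $c$ contains $v$, and the spare internal edge cannot be a dimer because $v$ is covered exactly once), and nothing in the embedding distinguishes them. Worse, the ambiguity is real and not just apparent: whenever the third corner $u$ of the triangle is covered by its external dimer (this occurs already for the prism graph with $c$ consisting of the two triangle curves around a pair of blown-up vertices), the detour from $v$ through the two spare internal edges via $u$ to the other endpoint $w$ of the dimer is an equally legitimate continuation inside $\Gamma'\setminus c$, rejoining the walk at the same external edge; so arcs characterized only by ``disjoint, in $\Gamma'\setminus c$, with boundary the cusps'' are \emph{not} unique, and no appeal to planarity repairs this. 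What actually singles out $L$ --- implicitly in the statement through $d'=d\setminus L$, and essentially for part 2 and for the weight count in Proposition \ref{prop:countingcurves} (each arc has one more external than internal edge) --- is the requirement that the arcs alternate unused external edges with internal dimers of $d$, i.e.\ that they \emph{consume} dimers.

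Once this alternation rule is imposed, every worry you list dissolves through one dichotomy, which is the paper's whole argument: for an external edge $f$ lying neither in $c$ nor in $d$, each endpoint of $f$ is either a cusp (if it lies on $c$, then $c$ avoids $f$ there, hence uses both internal edges) or is covered by an internal dimer (its unique covering dimer cannot be $f$ itself). Consequently the walk ``external edge, then the dimer at its far endpoint, then the external edge at the dimer's other endpoint, \dots'' never meets $c$ except at cusps and never stalls at a vertex where $c$ uses one internal and one external edge; moreover it is deterministic in both directions, so it cannot enter a cycle, each cusp traces an embedded arc terminating at another cusp, and arcs from distinct cusps are disjoint. Part 2 is the same inspection: the external edge at an endpoint of a dimer of $d'=d\setminus L$ cannot lead to a cusp (otherwise that external edge, and then the dimer itself, would already belong to an arc of $L$), so by the dichotomy the walk from any leftover internal dimer always lands on another internal dimer of $d'$ and must close up into the embedded curves $c'$, disjoint from $c\cup L$. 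In short: right construction, same as the paper's, but the forcing comes from the dimer-alternation constraint, not from the planar structure, and without making that constraint explicit the uniqueness claim you need is actually false.
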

{\bf Proof.}
The first statement is proved by remarking that if $e$ is an edge of $\Gamma'$ whose endpoint is a cusp of a configuration, then its other endpoint can only be either another cusp (in which case set $l_i=e$) or must be contained in an internal dimer. By iterating this argument for the edge at the other endpoint of that internal dimer one eventually constructs a path $l_i$ which must end at another cusp of $c$. The second statement is proved by a similar inspection.
\qed

There is a natural map $\pi:C(\Gamma)\to {\rm Conf}(\Gamma')$: define $\pi(\gamma)$ as $c\cup d$ where $c$ is the oriented curve in $\Gamma'$ formed by the edges and angles of $\Gamma$ contained in $\gamma$ exactly once and $d$ is the dimer formed by all the angles contained twice in $\gamma$ and all the edges not contained in $\gamma$.

\begin{proposition}\label{prop:countingcurves}
The map $\pi$ is a bijection and letting $\pi(\gamma)=c\cup d$, it holds: $$(-1)^{\#c+\#d}w(c\cup d)=(-1)^{{\rm cr}(\gamma)}\tr(\gamma)X^{\gamma}$$ where cr$(\gamma)$ is the number of crossings of $\gamma$. \end{proposition}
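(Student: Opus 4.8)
The plan is to prove the two assertions — that $\pi$ is a bijection and that the weights agree — separately, postponing the sign to the end as the most delicate point. Throughout I work with the matrix $M=W^1$ on the planar graph $G=\Gamma'$ and apply Corollary \ref{cor:determinants}, whose entries are read off from Remark \ref{rem:coeffs}.

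\emph{Bijectivity.} I would establish that $\pi$ is well-defined and invertible by a local analysis at each blown-up vertex. Every vertex $h$ of $\Gamma'$ is the endpoint of exactly one external edge (the edge of $\Gamma$ carrying $h$) and two internal edges (the two angles $\alpha,\beta$ at that vertex touching $h$); writing $m_e$ for the number of strands of $\gamma$ over the edge $e$ and $p_\alpha,p_\beta$ for the angle multiplicities, one has $m_e=p_\alpha+p_\beta\in\{0,1,2\}$. A check over the six cases $(p_\alpha,p_\beta)$ with $p_\alpha+p_\beta\le 2$ shows that $c\cup d=\pi(\gamma)$ covers $h$ exactly once: an unused edge ($m_e=0$) becomes an external dimer, a simple angle is a curve step, a cusp occurs precisely when $(p_\alpha,p_\beta)=(1,1)$ (so $e$ is doubly covered and left unused), and $(2,0)$ or $(0,2)$ gives an internal dimer. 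Hence $\pi(\gamma)\in\mathrm{Conf}(\Gamma')$. For the inverse I would invoke Proposition \ref{prop:abeliancase}: from $c\cup d$ the arcs $L$ reattach the cusps and the auxiliary curve $c'$ reorganizes the internal dimers; projecting the result down to $\Gamma$ reconstructs a unique immersed $\gamma$, and the multiplicities listed above guarantee that $\gamma$ passes over each edge $0,1$ or $2$ times, in the last case with opposite orientations.

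\emph{The $X$- and $t$-parts of the weight.} Using Remark \ref{rem:coeffs} I would split $w(c\cup d)$ into its $X$-monomial, its $t$-monomial, and its sign. An external dimer has weight $(+1)(-1)=-1$, while an internal dimer on an angle $\alpha$ has weight $(t_{l_\alpha}^{-1}t_{r_\alpha}X_\alpha)\cdot(-t_{l_\alpha}t_{r_\alpha}^{-1}X_\alpha)=-X_\alpha^2$; thus every dimer edge carries a sign $-1$ and no net $t$-factor, so $(-1)^{\#d}w(d)=\prod_{\alpha:\,p_\alpha=2}X_\alpha^2$. This matches exactly the doubly-covered angles in $X^\gamma$ and confirms that the two opposite traversals of such an angle cancel in $\tr(\gamma)$. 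The curve edges then contribute the remaining $X_\alpha$ (one per singly-used angle) together with the factors $t_{l_\alpha}^{\mp}t_{r_\alpha}^{\pm}$, which by the sign convention defining $\tr(\gamma)$ assemble into $\tr(\gamma)$ (one checks half-edge by half-edge that only singly-covered angles survive). Consequently the $X$- and $t$-parts of $(-1)^{\#c+\#d}w(c\cup d)$ already equal $X^\gamma\tr(\gamma)$, and collecting the signs reduces the claim to the purely topological identity $(-1)^{\#c+r}=(-1)^{\mathrm{cr}(\gamma)}$, where $r$ is the number of edges of $c$ traversed from right to left (each backward edge, internal or external, contributing the only remaining sign $-1$).

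\emph{The sign, which I expect to be the main obstacle.} Since $c\cup d$ is embedded in the planar graph $\Gamma'$ it has no crossings, so all crossings of $\gamma$ are created when the triangles of $\Gamma'$ are collapsed back to the vertices of $\Gamma$. My plan is to account for these crossings locally: at each vertex, reading the left-to-right order $v_1<v_2<v_3$ of the half-edges, one counts the pairs of strands that interleave, and sums these contributions modulo $2$ to obtain $\mathrm{cr}(\gamma)$. The sign convention of Remark \ref{rem:coeffs} (inherited from $P$ and $Q_\psi$) is precisely a Kasteleyn-type weighting on the planar graph $\Gamma'$ (compare Theorem \ref{teo:dimers} and \cite{Kas}), so that the product of entry signs around each component, combined with the cycle sign $(-1)^{\#c}$ contained in $(-1)^{\#c+\#d}$, computes this mod-$2$ crossing number via a Whitney-type turning-number count performed component by component on the embedded curve. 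The crux — and the step I expect to require the most care — is to verify that, for \emph{every} configuration, the local crossing contributions at the triangles together with the orientation reversals recorded by $r$ reproduce $\mathrm{cr}(\gamma)$ modulo $2$; fixing the global sign on one reference configuration then completes the identification and, through Theorem \ref{thm1}, yields Theorem \ref{teo:abelianwestbury}.
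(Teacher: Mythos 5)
Your bijectivity argument and your treatment of the $X$- and $t$-parts are sound and essentially coincide with the paper's: the paper also builds the inverse map from Proposition \ref{prop:abeliancase}, also observes that each dimer carries weight $-1$ (so $(-1)^{\#d}w(d)$ reduces to the monomial $\prod X_\alpha^2$ over doubly-visited angles), and also notes that doubly-traversed internal edges cancel in the $t$-monomial. Your reduction of what remains to the identity $(-1)^{\#c+r}=(-1)^{\mathrm{cr}(\gamma)}$ is likewise a correct reformulation. The genuine gap is that this identity --- which you yourself flag as the crux --- is never proved. Invoking ``a Kasteleyn-type weighting'' does no work here: you neither verify any Kasteleyn condition on $\Gamma'$ nor explain how such a statement about dimer configurations would control the crossing parity of an immersed \emph{multicurve}; and your fallback of ``fixing the global sign on one reference configuration'' cannot succeed on its own, because the sign of each configuration is an independent quantity --- one would need to connect arbitrary configurations by moves under which both sides of the identity transform identically, and no such move set is proposed. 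Note also that your reduced identity silently absorbs the doubled portions of $\gamma$ (the arcs $L$ and the curve $c'$), which do affect both the component count and the crossing/winding parity; this is exactly where the hard bookkeeping lives, and your ``local crossing contributions at the triangles'' plan never confronts it.

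The paper closes this gap by an explicit two-step computation that you could adopt. First, it compares $(-1)^{\#c}w(c)$ with $(-1)^{\#\gamma}w(\gamma)$, where $w(\gamma)$ is the product of $W^1$-entries along the full lift of $\gamma$ in $\Gamma'$: the doubled curve $c'$ contributes sign $+1$ (an even number of components, each of even length), while each arc of $L$ contains exactly one more external edge than internal ones, hence contributes one factor $-1$, and simultaneously either merges two components of $c$ or splits one in two, flipping $(-1)^{\#c}$; so the two signed weights agree. Second, for connected $\gamma$ it shows $\mathrm{sign}\,w(\gamma)=(-1)^{\mathrm{wind}(\gamma)}$: the lift alternates internal and external edges, and by Remark \ref{rem:coeffs} a consecutive pair traversed in the same left/right direction contributes a positive entry and tangent rotation $0$, while a mixed pair contributes a negative entry and rotation $\pm 2\pi$, so the negative entries count the winding mod $2$. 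Whitney's formula $\mathrm{wind}(\gamma)\equiv\mathrm{cr}(\gamma)+1 \pmod 2$ (together with the evenness of crossings between distinct closed components, which justifies summing over components and produces the factor $(-1)^{\#\gamma}$) then yields the sign. Your proposal names the right ingredients --- turning numbers and a Whitney-type count --- but without carrying out these two verifications the proof is incomplete precisely at its only delicate point.
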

\begin{proof}
We construct the reciprocal map $s:{\rm Conf}(\Gamma')\to C(\Gamma)$  by associating to a configuration $c\cup d$ the unique oriented curve $\gamma$ which passes once exactly over the edges contained in $c$ (and with the same orientation as $c$), twice over all the edges contained in $L$ (see Proposition \ref{prop:abeliancase} applied to $c\cup d$) and twice over all the edges contained in $c'$ (in that case we consider the curve without crossings). This proves our first claim.

Set $\pi(\gamma)=c\cup d$ then for any connected component $\delta$ of $\gamma$, we define $w(\delta)=\prod_{k=0}^n W^1_{h_{k},h_{k+1}}$ where $\delta$ visits the half-edges $h_0,\ldots,h_n,h_{n+1}=h_0$ in that order.
We first claim that the following formula holds: $$(-1)^{\# c+\#d}w(c)w(d)=(-1)^{\#\gamma}w(\gamma).$$
 We check this formula by looking first at the $X_\alpha$ terms, then the $t_h$ terms and then the signs. 
\begin{itemize}
\item[-] The monomial in $X_{\alpha}$ occuring in the right hand side is $X^\gamma$, where each angle appears as many times it is visited by $\gamma$. On the left hand side, each time an internal edge $\alpha$ is visited by $c$ produces a monomial $X_\alpha$ whereas each time a dimer occupies an internal edge $X_{\alpha}$ produces a monomial $X_{\alpha}^2$. As internal edges occupied by a dimer are visited twice by $\gamma$, these monomials coincide.
\item[-] By Remark \ref{rem:coeffs}, if an internal edge is visited twice in opposite directions, the resulting monomial in $t_h$ is equal to 1. Hence, monomials occur only for internal edges visited once. One sees that the set of internal edges visited once by $\gamma$ and $c\cup d$ coincide (with orientation) hence the resulting monomial is the same.
\item[-] By Remark \ref{rem:coeffs}, for any individual dimer $d_0$ one has $w(d_0)=-1$. Hence, one has $(-1)^{\# d}w(d)=1$ (we drop here the $X_\alpha$ and $t_h$ parts which have already been taken care of). In order to show $(-1)^{\#\gamma}w(\gamma)=(-1)^{\# c}w(c)$, we recall that to build $\gamma$ from $c$, one needs to add the double arcs $L$ and the double curve $c'$. For the latter, one sees that the weight is 1 because the number of curves is even and each curve goes through a even number of edges. Now, any added double arc contains exactly one more external edge than internal ones and thus produces a -1 factor. On the other hand, either it merges two components of $c$ or it splits one component in two hence the formula is proven.
\end{itemize}
Let us show finally that $(-1)^{\#\gamma}w(\gamma)=(-1)^{{\rm cr}(\gamma)}\tr(\gamma)X^\gamma$. We only have to check the sign, and one can suppose that $\gamma$ is connected. Then one has to show that Sign $w(\gamma)=(-1)^{{\rm cr}(\gamma)+1}=(-1)^{{\rm wind}(\gamma)}$ where ${\rm wind}(\gamma)$ is the winding number of $\gamma$ (here we use the equality ${\rm wind}(\gamma)={\rm cr}(\gamma)+1$ mod 2). 

The lift of $\gamma$ to an oriented curve immersed in $\Gamma'$ meets alternatively an internal edge and an external one: by Remark \ref{rem:coeffs} if both are covered in the same direction (e.g right to left) then the contribution to $w(\gamma)$ has positive coefficient and if not, then the contribution is negative. On the other hand, the tangent vector to $\gamma$ rotates either of $0$ or $\pm 2\pi$ depending on the same condition. Hence, one has Sign $w(\gamma)=(-1)^{{\rm wind}(\gamma)}$ and thus the formula of the proposition is proved. \end{proof}

\subsection{Recovering Westbury's theorem}\label{sub:recoverwestbury}
Theorem \ref{theo:westbury} can be recovered from Theorem \ref{teo:abelianwestbury} by remarking that if $t_h=1\  \forall\  h\in H$ then the equalities $W^2=-\ba{W^1}$ and $W^2=(W^1)^t$ imply $(W^1)^t=-W^1$ and so $\det(W^1)=\pf(W^1)^2$. Hence we must show that $\pf(W^1)=\sum_{c\subset \Gamma} \prod_{\alpha\subset c } X_\alpha$. 
By Theorem \ref{teo:dimers} it holds $$\pf(W^1)=\sum_{d\in {\rm DConf}(\Gamma')} \pm \sqrt{|w(d)|}=\sum_{d\in {\rm DConf}(\Gamma')} \pm\prod_{\alpha \subset d} X_\alpha$$ and it is straightforward to check that for each $d\in {\rm DConf}(\Gamma')$ the monomial $\sqrt{|w(d)|}$ equals $\pm w(c(d))$ for a unique (possibly empty or disconnected) curve $c(d)$ embedded in $\Gamma$; similarly $\forall \gamma\subset \Gamma$ it exists exactly one $d\in {\rm DConf}(\Gamma')$ such that $c(d)=\gamma$. 
So we have $\pf(W^1)=\sum_{\gamma\subset \Gamma}s(\gamma)\prod_{\alpha\subset \gamma } X_\alpha$ (with $s(\gamma)=\pm 1$) and we are left to show that $s(\gamma)=1\ \forall \gamma\subset \Gamma$. Let us first remark that by Theorem \ref{teo:abelianwestbury} the coefficient of $w(\gamma_1\sqcup \cdots \sqcup \gamma_k)=\prod_{\alpha\subset \gamma } X_\alpha$ in $\det(W^1)$ is $2^k$. We now use the equality $\det(W^1)=\pf(W^1)^2$ and argue by induction on $k$. If $k=1$ the coefficient of $w(\gamma)$ in $\det(W^1)$ is $2=2s(\gamma)$ and so $s(\gamma)=1$. Now suppose $s(\gamma)=1$ for all $\gamma\subset \Gamma$ such that $\# \gamma\leq k$; then given $\gamma=\gamma_1\sqcup \cdots \sqcup \gamma_{k+1}$, the coefficient of $w(\gamma)$ in $\det(W^1)$ is:
$$2^{k+1}=2s(\gamma)+2\sum_{\substack{I\subset \{1,\ldots k+1\}\\ 1\leq \#I\leq \frac{k+1}{2}}} s(\bigsqcup_{i\in I} \gamma_i)s(\bigsqcup_{i\notin I} \gamma_i)=2(s(\gamma)+\frac{1}{2}(2^{k+1}-2))$$
and so $s(\gamma)=1$.

\end{document}